\documentclass{amsart}
\usepackage{amsmath}
\usepackage{paralist}
\usepackage{amsfonts}
\usepackage{amssymb}
\usepackage{amsthm}
\usepackage{amscd}
\usepackage{float}
\usepackage{tikz}
\usepackage{graphicx}
\usepackage[colorlinks=true]{hyperref}
\hypersetup{urlcolor=blue, citecolor=red}
\usepackage{hyperref}
\usepackage{cite}
\usepackage{url}

  \textheight=8.2 true in
   \textwidth=5.0 true in
    \topmargin 30pt
     \setcounter{page}{1}

\newtheorem{theorem}{Theorem}[section]

\newtheorem{lemma}[theorem]{Lemma}
\newtheorem{proposition}[theorem]{Proposition}

\theoremstyle{definition}
\newtheorem{definition}[theorem]{Definition}
\newtheorem{remark}[theorem]{Remark}

\newcommand{\R}{\mathbb{R}}
\newcommand{\Z}{\mathbb{Z}}
\newcommand{\N}{\mathbb{N}}

\newcommand {\Rd}{\mathbb{R}^{d}}
\newcommand {\Rtwo}{\mathbb{R}^{2}}
\newcommand {\Hp}{\mathcal{H}^{p}_{FIO}(\Rd)}
\newcommand {\Hps}{\mathcal{H}^{s,p}_{FIO}(\Rd)}  
\newcommand {\w}{\omega}
\newcommand {\ph}{\varphi}
\newcommand{\Sd}{\mathbb{S}^{d-1}}
\newcommand{\Su}{\mathbb{S}}
\newcommand {\HT}{\mathcal{H}}
\newcommand {\Bspq}{B^{s}_{p,r}(\Rd)}
\newcommand {\Ba}{\mathcal{B}}
\newcommand {\Baspqr}{\mathcal{B}^{s}_{p,q,r}(\Rd)}
\newcommand {\Baspqq}{\mathcal{B}^{s}_{p,q,q}(\Rd)}
\newcommand {\Basppp}{\mathcal{B}^{s}_{p,p,p}(\Rd)}
\newcommand {\supp}{\mathrm{supp}}
\newcommand{\wh}{\widehat}
\newcommand {\rb}{\rangle}
\newcommand {\lb}{{\langle}}
\newcommand {\Sw}{\mathcal{S}}
\newcommand {\F}{{\mathcal{F}}}
\newcommand {\bmo}{\mathrm{bmo}}
\newcommand {\veps}{\varepsilon}
\newcommand {\vanish}[1]{\relax}

\def\XXint#1#2#3{{\setbox0=\hbox{$#1{#2#3}{\int}$ }
\vcenter{\hbox{$#2#3$ }}\kern-.6\wd0}}

\numberwithin{equation}{section}

\makeatletter
\@namedef{subjclassname@2020}{%
  \textup{2020} Mathematics Subject Classification}
\makeatother

\title{Nonlinear wave equations with slowly decaying initial data}

\author{Jan Rozendaal}
\address{Institute of Mathematics, Polish Academy of Sciences\\
ul.~\'{S}niadeckich 8\\
00-656 Warsaw\\
Poland}
\email{jrozendaal@impan.pl}

\author[Robert Schippa]{Robert Schippa*}
\address{Karlsruhe Institute of Technology\\
Englerstrasse 2 \\
76131 Karlsruhe \\
Germany
}
\email{robert.schippa@kit.edu}

\keywords{Nonlinear wave equations, adapted function spaces, $\ell^2$-decoupling}

\subjclass[2020]{42B35, 35L05}

\thanks{*Corresponding author}

\begin{document}
\begin{abstract}
New local smoothing estimates in Besov spaces adapted to the half-wave group are proved via $\ell^2$-decoupling. We apply these estimates to obtain new well-posedness results for the cubic nonlinear wave equation in two dimensions. The results are compared to new well-posedness results in $L^p$-based Sobolev spaces.
\end{abstract}

\maketitle

\section{Introduction}

\subsection*{Setting}

We consider nonlinear wave equations with power-type nonlinearity:
\begin{equation}
\label{eq:NLWIntro}
\begin{cases}
\partial_t^2 u &= \Delta_{x} u \pm |u|^{\alpha-1} u,\quad (t,x) \in \R \times \R^d, \; d \geq 2,\\
u(0)&= f \in X, \quad \dot{u}(0) = g \in Y,
\end{cases}
\end{equation}
We shall analyze in detail the cubic nonlinear wave equation, where $d=2$ and $\alpha =3$. Moreover, we consider slowly decaying initial data, by which we mean initial data contained in $L^{p}$-based spaces for $p>2$.

Recently, the well-posedness of the nonlinear Schr\"odinger equation with slowly decaying initial data has attracted attention \cite{DodsonSofferSpencer2021,Schippa2022}, in part due to the importance of such initial data for modeling signals. The well-posedness results in this article are proved via a simple contraction mapping argument; similar to \cite{Schippa2022} by the second author. We use Duhamel's formula to write \eqref{eq:NLWIntro} as
\[
%\begin{equation}
%\label{eq:DuhamelIntro}
u(t) = \Phi_{f,g}(u) = \cos(t \sqrt{- \Delta}) f + \frac{\sin(t \sqrt{- \Delta}) g}{\sqrt{- \Delta}} \pm \int_0^t \frac{\sin((t-s) \sqrt{- \Delta})}{\sqrt{- \Delta}} (|u|^{\alpha - 1} u)(s) ds.
%\end{equation}
\]
The proof that $\Phi_{f,g}$ is a contraction in a space-time function space $S$ hinges on linear estimates
\[
%\begin{equation}
%\label{eq:LinearEstimateIntro}
\| \cos(t \sqrt{- \Delta}) f \|_S \lesssim \| f \|_X , \qquad \Big\| \frac{\sin (t \sqrt{- \Delta}) g}{\sqrt{- \Delta}} g \Big\|_S \lesssim \| g \|_Y,
%\end{equation}
\]
and a nonlinear estimate
\[
%\begin{equation}
%\label{eq:NonlinearEstimateIntro}
\Big\| \int_0^t \frac{\sin((t-s) \sqrt{- \Delta})}{\sqrt{ - \Delta}} (|u|^{\alpha -1} u)(s) ds \Big\|_S \lesssim \| u \|_S^\alpha.
%\end{equation}
\]
We shall use space-time Lebesgue spaces $S = L_t^r([0,T],L^p(\R^d))$ as iteration spaces; possibly intersected with another function space. As spaces of initial data, for $2<p<\infty$, we choose $X = W^{s,p}(\R^d)$ and $Y = W^{s-1,p}(\R^d)$, or we consider Besov spaces $X = \mathcal{B}^{s}_{p,2,2}(\R^d)$ and $Y = \mathcal{B}^{s-1}_{p,2,2}(\R^d)$ adapted to the half-wave group. The spaces $\Baspqr$, which are introduced in this article, are invariant under the half-wave group, and they satisfy Sobolev embeddings into the standard Besov scale. This invariance under the half-wave group is in sharp contrast with $W^{s,p}(\Rd)$ for $p\neq 2$, and a key motivation to consider adapted spaces.

\subsection*{Adapted spaces and local smoothing}

The use of adapted Besov spaces builds on recent work concerning invariant spaces for Schr\"{o}dinger and wave equations. Indeed, modulation spaces, invariant spaces for Schr\"{o}dinger propagators, have been used extensively as spaces of initial data for nonlinear Schr\"{o}dinger equations (see \cite{GuoWangZhao2006,ChaichenetsHundertmarkKunstmannPattakos2017,ChaichenetsHundertmarkKunstmannPattakos2019,Schippa2022} and references therein). On the other hand, a scale $(\Hp)_{1\leq p\leq \infty}$ of Hardy spaces for Fourier integral operators (FIOs) was introduced in \cite{HassellPortalRozendaal2020} by Hassell, Portal and the first author. This work in turn generalizes the case $p=1$ due to Smith \cite{Smith1998}, which predates \cite{HassellPortalRozendaal2020} by decades. The Hardy spaces for FIOs are invariant under half-wave propagators and more general FIOs, and they satisfy the Sobolev embeddings
\begin{equation}\label{eq:Sobolevintro}
W^{s(p),p}(\Rd)\subseteq \Hp\subseteq W^{-s(p),p}(\Rd)
\end{equation}
for all $1<p<\infty$, with the natural modifications involving the local Hardy space $\HT^{1}(\Rd)$ for $p=1$, and $\bmo(\Rd)$ for $p=\infty$. Here and throughout,
\begin{equation}\label{eq:sp}
s(p)=\frac{d-1}{2}\Big|\frac{1}{2}-\frac{1}{p}\Big|.
\end{equation}
By combining these two properties, one recovers the sharp $L^{p}$ mapping properties of the half-wave group, due to Peral and Miyachi \cite{Peral1980,Miyachi1980}:
\begin{equation}\label{eq:fixed}
e^{it\sqrt{-\Delta}}:W^{2s(p),p}(\Rd)\to L^{p}(\Rd)
\end{equation}
for all $1<p<\infty$ and $t\in\R$, and the more general $L^{p}$ mapping properties of FIOs due to Seeger, Sogge and Stein \cite{SeegerSoggeStein1991}.

The invariance of these spaces under the solution operators to Schr\"{o}dinger and wave equations allows one to use iterative constructions to build parametrices, as was done for rough wave equations using $\Hp$ in \cite{HassellRozendaal2020}. It also shows that such spaces are natural for the fixed-time regularity of these equations.

On the other hand, it was observed by Sogge \cite{Sogge1991} that considering space-time Lebesgue norms of the solution to the Euclidean wave equation yields a gain of regularity over the fixed-time estimates in \eqref{eq:fixed}. More precisely, in \cite{Sogge1991} Sogge formulated the local smoothing conjecture for the Euclidean wave equation, which states that
\begin{equation}
\label{eq:LocalSmoothingConjecture}
\| e^{it \sqrt{- \Delta}} f \|_{L_t^p([0,1],L^p(\R^d))} \lesssim_{\veps} \| f \|_{W^{\sigma(p)+\veps,p}(\R^d)}
\end{equation}
for all $\veps>0$, where $\sigma(p)=0$ for $2<p\leq 2d/(d-1)$, and $\sigma(p)=2s(p)-1/p$ for $p>2d/(d-1)$.

The local smoothing conjecture implies several open problems in harmonic analysis, like the Bochner--Riesz conjecture and the restriction conjecture. A breakthrough result was the proof of the sharp $\ell^2$-decoupling inequality for the cone, due to Bourgain--Demeter \cite{BourgainDemeter2015}. More precisely, set  
\[
%\begin{equation}
%\label{eq:DecouplingDerivativeLoss}
\overline{s}(p):= 
\begin{cases}
0,& 2 \leq p \leq \frac{2(d+1)}{d-1},\\
s(p) - \frac{1}{p},&\frac{2(d+1)}{d-1} \leq p < \infty.
\end{cases}
%\end{equation}
\]
For $k\in\N$, let $(\chi_\nu)_{\nu}$ be a partition of unity of $\Rd\setminus \{0\}$ with smooth zero-homogeneous functions, which localize to cones of aperture aproximately $2^{-k/2}$, and let $g\in\Sw(\R)$ be such that $|g(t)|\geq 1$ for $t\in[0,1]$, and $\supp(\wh{g}\,)\subseteq[-1,1]$. In the following let $D = \sqrt{- \Delta}$ and for a measurable function $m: \R^d \to \R$, we denote the Fourier multiplier by $m(D)$ such that $(m(D) f) \widehat (\xi) = m(\xi) \hat{f}(\xi)$. After rescaling to unit frequencies (see e.g.~\cite[Section~3]{BeltranHickmanSogge2020}), using that $\|e^{it\sqrt{-\Delta}}f\|_{L^{p}([0,1]\times\Rd)}\leq \|g(t)e^{it\sqrt{-\Delta}}f\|_{L^{p}(\R\times\Rd)}$ and that $(t,x)\mapsto g(t)e^{it\sqrt{-\Delta}}f(x)$ has frequency support near the light cone, it then follows from the $\ell^{2}$-decoupling inequality \cite[Theorem~1.2]{BourgainDemeter2015} that
\begin{equation}
\label{eq:DecouplingConeIntroduction}
\|e^{it \sqrt{- \Delta}}f \|_{L^p([0,1]\times \Rd)} \lesssim 2^{k(\overline{s}(p) + \varepsilon)} \Big(\sum_\nu \|g(t)e^{it \sqrt{- \Delta}} \chi_\nu(D) f \|^2_{L^p(\R\times\Rd)} \Big)^{1/2}
\end{equation}
for any $\veps>0$ and $f\in L^{p}(\Rd)$ with $\supp(\wh{f}\,)\subseteq \{\xi\in\Rd\mid 2^{-1+k}\leq |\xi|\leq 2^{1+k}\}$.  
In turn, from \eqref{eq:DecouplingConeIntroduction} follow local smoothing estimates by an application of H\"older's inequality, to pass from the $\ell^2$ norm  to the $\ell^p$ norm, and from a kernel estimate. 
Although \eqref{eq:DecouplingConeIntroduction} is sharp, it does not imply the local smoothing conjecture; it only yields the required bounds for $p\geq 2(d+1)/(d-1)$. The local smoothing conjecture was recently resolved for $d=2$ 
via a sharp (reverse) $L^4$-square function estimate by Guth--Wang--Zhang \cite{GuthWangZhang2020}, but it is still open for $d\geq 3$. 

Coming back to the nonlinear wave equation, we will use local smoothing estimates to lower the regularity of the initial data required to solve  \eqref{eq:NLWIntro}, thereby providing, to the best of the authors' knowledge, a novel approach to nonlinear wave equations with slowly decaying initial data.

\subsection*{Main results}

Firstly, we introduce the adapted Besov spaces $\Baspqr$ and we derive some of their properties. In particular, we show the Besov counterpart of the Sobolev embeddings in \eqref{eq:Sobolevintro}:
\begin{equation}\label{eq:SobolevBesovIntro}
B^{s+s(p)}_{p,p}(\Rd)\subseteq\Basppp\subseteq B^{s-s(p)}_{p,p}(\Rd).
\end{equation}
We also show the invariance of $\Baspqq$ under the half-wave propagators. In fact, we take this opportunity to show the sharp polynomial growth rate of the $\Baspqq$-norm under evolution of the half-wave group. This quantifies a polynomial growth result by the first author \cite[Lemma~3.5]{Rozendaal2021LocalSmoothing}, which was established for the Hardy spaces for FIOs. More precisely, in Proposition \ref{prop:PolynomialGrowth} we show that 
\begin{equation}
\label{eq:PolynomialGrowthIntro}
\| e^{it \sqrt{- \Delta}} f \|_{\mathcal{B}^{s}_{p,q,q}(\R^d)} \lesssim (1+|t|)^{2s(p)} \| f \|_{\mathcal{B}^{s}_{p,q,q}(\R^d)}
\end{equation}
for all $p\in[1,\infty]$, $q\in[1,\infty)$, $s,t\in\R$ and $f\in\Baspqq$. In Proposition \ref{prop:SharpGrowth} we show that this is sharp, by using a radial Knapp example. 

Next, we obtain improved local smoothing estimates, in terms of $\Ba^{s}_{p,2,2}(\Rd)$. 

\begin{theorem}%[Local smoothing in $\mathcal{B}^{s}_{p,2}$-spaces]
\label{thm:LocalSmoothingHardy}
Let $d \geq 2$, $p\in(2,\infty)$ and $\veps>0$. Then there exists a $C\geq0$ such that
\begin{equation}
\label{eq:LocalSmoothingHardySpaces}
\| e^{it \sqrt{- \Delta}} f \|_{L_t^p([0,1],L^p(\R^d))} \leq C \| f \|_{\mathcal{B}^{\overline{s}(p)+\veps}_{p,2,2}(\R^d)}
\end{equation}
for all $f\in\Ba^{s}_{p,2,2}(\Rd)$.
\end{theorem}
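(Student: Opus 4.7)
The plan is to combine the $\ell^{2}$-decoupling inequality \eqref{eq:DecouplingConeIntroduction} with a Littlewood--Paley square function argument, and then to recognize the resulting sum as the $\mathcal{B}^{\overline{s}(p)+\veps}_{p,2,2}(\Rd)$-norm of $f$.

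First I would Littlewood--Paley decompose $f=\sum_{k\geq 0}\psi_{k}(D)f$, with $\psi_{k}$ localizing to $\{|\xi|\sim 2^{k}\}$ for $k\geq 1$. Since $e^{it\sqrt{-\Delta}}$ commutes with $\psi_{k}(D)$, the standard Littlewood--Paley square function theorem in $x$ (applied at each $t$), together with Minkowski's inequality (valid because $p\geq 2$) in $x$ and then in $t$, produces
\[
\|e^{it\sqrt{-\Delta}}f\|_{L^{p}([0,1]\times\Rd)}\lesssim\Bigl(\sum_{k\geq 0}\|e^{it\sqrt{-\Delta}}\psi_{k}(D)f\|_{L^{p}([0,1]\times\Rd)}^{2}\Bigr)^{1/2}.
\]
To each summand on the right I would apply \eqref{eq:DecouplingConeIntroduction} with $f$ replaced by $\psi_{k}(D)f$, obtaining
\[
\|e^{it\sqrt{-\Delta}}\psi_{k}(D)f\|_{L^{p}([0,1]\times\Rd)}\lesssim 2^{k(\overline{s}(p)+\veps)}\Bigl(\sum_{\nu}\|g(t)e^{it\sqrt{-\Delta}}\chi_{\nu}(D)\psi_{k}(D)f\|_{L^{p}(\R\times\Rd)}^{2}\Bigr)^{1/2},
\]
with $\{\chi_{\nu}\}$ the cone partition of aperture $2^{-k/2}$.

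Next I would bound each cone-localized summand by its initial data. The $(t,x)$-Fourier transform of $g(t)e^{it\sqrt{-\Delta}}\chi_{\nu}(D)\psi_{k}(D)f$ is supported in a single plate on the light cone (of dimensions $\sim 1\times 2^{k/2}\times\cdots\times 2^{k/2}\times 2^{k}$), and combining this structural information with $g\in\Sw(\R)$ gives, via the kernel estimate mentioned in the discussion following \eqref{eq:DecouplingConeIntroduction},
\[
\|g(t)e^{it\sqrt{-\Delta}}\chi_{\nu}(D)\psi_{k}(D)f\|_{L^{p}(\R\times\Rd)}\lesssim\|\chi_{\nu}(D)\psi_{k}(D)f\|_{L^{p}(\Rd)}
\]
uniformly in $k$ and $\nu$. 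Assembling the three bounds yields
\[
\|e^{it\sqrt{-\Delta}}f\|_{L^{p}([0,1]\times\Rd)}\lesssim\Bigl(\sum_{k\geq 0}2^{2k(\overline{s}(p)+\veps)}\sum_{\nu}\|\chi_{\nu}(D)\psi_{k}(D)f\|_{L^{p}(\Rd)}^{2}\Bigr)^{1/2},
\]
and the right-hand side is (equivalent to) $\|f\|_{\mathcal{B}^{\overline{s}(p)+\veps}_{p,2,2}(\Rd)}$ by the definition of the adapted Besov space.

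The principal obstacle I anticipate is establishing the uniform kernel estimate in the third step across all $\sim 2^{k(d-1)/2}$ cone directions and all dyadic frequency scales; this is really the bridge from the abstract decoupling right-hand side to the adapted Besov norm. The remaining steps use only Littlewood--Paley theory and the decoupling inequality cited directly from the introduction.
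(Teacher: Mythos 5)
Your proposal is correct and rests on exactly the same ingredients as the paper's proof (decoupling applied on Littlewood--Paley pieces, followed by recognizing the resulting cone--plate $\ell^2$ sum as the $\mathcal{B}^{\overline{s}(p)+\veps}_{p,2,2}$ norm), but the bookkeeping over the dyadic index $k$ is handled differently. You retain the $\ell^2$ structure in $k$ via a Littlewood--Paley square function plus two applications of Minkowski (using $p>2$), which then matches the $\ell^2$-over-$k$ part of the $\mathcal{B}^{s}_{p,2,2}$ norm directly; the paper instead uses a plain triangle inequality over $k$, applies decoupling with the smaller exponent $\veps/2$, and uses the remaining $2^{-k\veps/2}$ gain to sum a geometric series. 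Both are valid; your route is slightly more natural in that it does not need to split $\veps$. What you flag as the "principal obstacle" --- the uniform bound
$\|g(t)e^{it\sqrt{-\Delta}}\chi_{\nu}(D)\psi_{k}(D)f\|_{L^{p}(\R\times\Rd)}\lesssim\|\chi_{\nu}(D)\psi_{k}(D)f\|_{L^{p}(\Rd)}$
over all $k$ and $\nu$ --- is precisely what is isolated and proved in the paper as Proposition \ref{prop:equivwave}; there the fixed-time estimate $\|e^{it\sqrt{-\Delta}}\chi_{\nu}(D)h\|_{L^p}\lesssim(1+|t|)^N\|\chi_{\nu}(D)h\|_{L^p}$ is combined with the rapid decay of $g$ to integrate in $t$. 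You have correctly identified this as the crux, and the final equivalence with the $\mathcal{B}^{\overline{s}(p)+\veps}_{p,2,2}$ norm is Proposition \ref{prop:discrete} specialized to $q=2$ (for which the angular correction $\tfrac{d-1}{2}(\tfrac12-\tfrac1q)$ vanishes).
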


The exponent $\overline{s}(p)$ in \eqref{eq:LocalSmoothingHardySpaces} is sharp for all $2<p<\infty$, cf.~Remark \ref{rem:sharpness2}. In fact, the right-hand side of \eqref{eq:DecouplingConeIntroduction} is equivalent to the $\Ba^{\overline{s}(p)+\veps}_{p,2,2}(\Rd)$-norm. Hence, when restricted to dyadic frequency annuli, $\Ba^{\overline{s}(p)+\veps}_{p,2,2}(\Rd)$ is the largest space of initial data for which one can obtain local smoothing estimates when applying the $\ell^{2}$-decoupling inequality in the manner in which it is typically used. 

The corresponding bounds for $\Hps$, or equivalently for $\Basppp$, are due to the first author \cite{Rozendaal2021LocalSmoothing}. We note that
\begin{equation}
\label{eq:EmbeddingRefinement}
W^{s+s(p)+2\veps,p}(\Rd)\subseteq \mathcal{B}^{s+\varepsilon}_{p,p,p}(\Rd) \subseteq \mathcal{B}^{s}_{p,2,2}(\Rd)
\end{equation}
for all $s\in\R$, $2<p<\infty$ and $\veps>0$, and that the $\Ba^{s}_{p,p,p}(\Rd)$-norm of certain functions is substantially larger than their $\Ba^{s}_{p,2,2}(\Rd)$-norm (see Remark \ref{rem:dyadicpar}). Hence \eqref{eq:LocalSmoothingHardySpaces} strictly improves upon the bounds in \cite{Rozendaal2021LocalSmoothing}, and in particular upon the local smoothing conjecture for $p\geq 2(d+1)/(d-1)$. On the other hand, it is an open question whether $\Ba^{s}_{p,2,2}(\Rd)$ is invariant under general FIOs, as $\Hps$ is. For $2<p<2(d+1)/(d-1)$, \eqref{eq:LocalSmoothingHardySpaces} neither follows from the local smoothing conjecture, nor does it imply it.

Next, we show how local smoothing estimates can be combined with nonlinear Strichartz estimates to prove well-posedness for nonlinear wave equations with slowly decaying initial data. We write $\dot{H}^s(\R^d) = |D|^{-s} L^2(\R^d)$.

\begin{theorem}%[Well-posedness of the NLW with slowly decaying initial data]
\label{thm:LWPNLW}
Let $d=2$, $\alpha = 3$, and $\varepsilon > 0$. Then, \eqref{eq:NLWIntro} is analytically locally well posed with initial data space 
\begin{equation}\label{eq:LWPNLW1}
X \times Y = (\mathcal{B}^{\varepsilon}_{4,2,2}(\R^2) + \dot{H}^{3/8}(\R^2)) \times (\mathcal{B}^{\varepsilon-1}_{4,2,2}(\R^2) + \dot{H}^{-5/8}(\R^2)),
\end{equation}
and solution space $S_{T}= L_t^{24/7}([0,T],L^4(\R^2)) \cap C([0,T],\mathcal{B}^{\varepsilon}_{4,2,2}(\R^2) + \dot{H}^{3/8}(\R^2) )$ with $T = T(\|(f,g) \|_{X \times Y})$. Moreover, \eqref{eq:NLWIntro} is analytically locally well posed with initial data space
\begin{equation}\label{eq:LWPNLW2}
X \times Y = (\mathcal{B}^{\varepsilon}_{6,2,2}(\R^2) + \dot{H}^{1/2}(\R^2)) \times (\mathcal{B}^{\varepsilon-1}_{6,2,2}(\R^2) + \dot{H}^{-1/2}(\R^2)),
\end{equation}
and solution space $S_{T} = L_t^4([0,T],L^6(\R^2)) \cap C([0,T],\mathcal{B}^{\varepsilon}_{6,2,2}(\R^2) + \dot{H}^{1/2}(\R^2) )$.
%\medskip
%
%\noindent Furthermore, for
%\begin{equation*}
%\begin{split}
%X \times Y &\in \{ (\mathcal{B}^{\varepsilon}_{4,2}(\R^2) + \dot{H}^{3/8}(\R^2)) \times (\mathcal{B}^{\varepsilon-1}_{4,2}(\R^2) + \dot{H}^{-5/8}(\R^2)), \\
%&\quad \; (\mathcal{B}^{\varepsilon}_{6,2}(\R^2) + \dot{H}^{1/2}(\R^2)) \times (\mathcal{B}^{\varepsilon-1}_{6,2}(\R^2) + \dot{H}^{-1/2}(\R^2)) \},
%\end{split}
%\end{equation*}
% we have $u \in C([0,T],\mathcal{B}^{\varepsilon}_{4,2}(\R^2) + \dot{H}^{3/8}(\R^2) )$ or $u \in C([0,T],\mathcal{B}^{\varepsilon}_{6,2}(\R^2) + \dot{H}^{1/2}(\R^2) )$, respectively.
\end{theorem}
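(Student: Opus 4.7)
The plan is a Banach fixed point for $\Phi_{f,g}$ on a closed ball in $S_T$, with $T$ chosen small depending on $\|(f,g)\|_{X\times Y}$. For any data $(f,g) \in X \times Y$ we split $(f,g) = (f_1 + f_2, g_1 + g_2)$ with $(f_1, g_1) \in \mathcal{B}^{\varepsilon}_{p,2,2}(\R^2) \times \mathcal{B}^{\varepsilon - 1}_{p,2,2}(\R^2)$ and $(f_2, g_2) \in \dot H^{s}(\R^2) \times \dot H^{s-1}(\R^2)$, where $(p,s)$ is $(4, 3/8)$ for \eqref{eq:LWPNLW1} and $(6, 1/2)$ for \eqref{eq:LWPNLW2}. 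The free evolution splits correspondingly into a ``Besov'' piece and a ``Sobolev'' piece, and I handle each by a different linear estimate.

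For the Sobolev piece I invoke the standard $2$D wave Strichartz estimate at the admissible pairs $(q, r) = (8, 4)$ (admissible at $s = 3/8$) and $(q, r) = (6, 6)$ (admissible at $s = 1/2$), and then apply H\"older in time on $[0, T]$ to pass to the iteration exponent: $L^8_t L^4_x \hookrightarrow L^{24/7}_t L^4_x$ with gain $T^{1/6}$, and $L^6_{t,x} \hookrightarrow L^4_t L^6_x$ with gain $T^{1/12}$. For the Besov piece, Theorem~\ref{thm:LocalSmoothingHardy} gives $\|e^{it\sqrt{-\Delta}} f_1\|_{L^{p}_t([0,1], L^{p}_x)} \lesssim \|f_1\|_{\mathcal{B}^{\varepsilon}_{p,2,2}}$, and the same H\"older reduction yields the iteration norm on $[0, T]$, at the cost of using Proposition~\ref{prop:PolynomialGrowth} to sum over unit intervals when $T \geq 1$. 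The $C_t(\mathcal{B}^{\varepsilon}_{p,2,2} + \dot H^s)$-norms of the linear pieces are similarly controlled by Proposition~\ref{prop:PolynomialGrowth} and the wave energy inequality.

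For the nonlinear term, the key H\"older computation is
\[
\||u|^2 u\|_{L^{8/7}_t L^{4/3}_x} = \|u\|_{L^{24/7}_t L^4_x}^{3}, \qquad \||u|^2 u\|_{L^{4/3}_t L^2_x} = \|u\|_{L^4_t L^6_x}^{3}
\]
in the two cases. My plan is to place the Duhamel term $v[u]$ into the $\dot H^{1/2}$-slot of the sum space via the wave energy inequality $\|v[u]\|_{L^\infty_t \dot H^{1/2}} \lesssim \|u^3\|_{L^1_t \dot H^{-1/2}}$ combined with the Sobolev duality $L^{4/3}(\R^2) \hookrightarrow \dot H^{-1/2}(\R^2)$. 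In case~\eqref{eq:LWPNLW1} this gives directly $\|u^3\|_{L^1_t \dot H^{-1/2}} \lesssim T^{1/8} \|u\|_{L^{24/7}_t L^4_x}^{3}$; in case~\eqref{eq:LWPNLW2} I expand $u = u_B + u_H$ according to $\mathcal{B}^{\varepsilon}_{6,2,2} + \dot H^{1/2}$ and H\"older trilinearly, using $u_H \in L^\infty_t L^4_x$ (from $\dot H^{1/2} \hookrightarrow L^4$) for the pure-$u_H$ terms and the fact that $u_B$ sits in the $L^4_t L^6_x$ iteration norm to handle cross terms. The Sobolev embedding $\dot H^{1/2}(\R^2) \hookrightarrow L^4(\R^2)$ then upgrades the $L^\infty_t \dot H^{1/2}$-control of $v[u]$ to $L^\infty_t L^4_x$, and a final H\"older in time delivers the iteration norm $L^{24/7}_t L^4_x$; the analogous $L^4_t L^6_x$ bound in case~\eqref{eq:LWPNLW2} follows by combining the energy bound with inhomogeneous Strichartz at the admissible pair $(6, 6)$. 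Putting everything together yields $\|\Phi_{f,g}(u)\|_{S_T} \leq C\|(f,g)\|_{X \times Y} + C T^{\kappa}\|u\|_{S_T}^{3}$ with the analogous Lipschitz bound on differences, so $\Phi_{f,g}$ is a strict contraction on a small ball for $T$ small, and analytic dependence on data is inherited from the cubic structure of $\Phi_{f,g}$.

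The main obstacle is that $\mathcal{B}^{\varepsilon}_{p,2,2}(\R^2)$ does not embed into any scaling-invariant Lebesgue space $L^r(\R^2)$ with $r \neq p$, so pointwise-in-time $L^r_x$-control of $u$ needed in the nonlinear H\"older step has to be extracted from the time-integrated iteration norm and from the $\dot H^{s}$-component of the splitting, never from the $\mathcal{B}^{\varepsilon}_{p,2,2}$-component. This rigidity is precisely what fixes the iteration exponents: they must be large enough that the cubing closes back through the dual Sobolev embeddings $L^{4/3} \hookrightarrow \dot H^{-1/2}$ and $\dot H^{1/2} \hookrightarrow L^4$, and small enough that both the Sobolev Strichartz and the Besov local-smoothing bounds fit by H\"older in time with a positive power of $T$. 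A secondary technicality is that $v[u]$, which lives naturally in $\dot H^{1/2}$, must be placed back inside $\mathcal{B}^{\varepsilon}_{p,2,2} + \dot H^{s}$; this is achieved by a Littlewood--Paley decomposition, since the high-frequency part of $\dot H^{1/2}$ embeds into $\dot H^{s}$ for $s \leq 1/2$, while the low-frequency part, being smooth and uniformly $L^p$-bounded via Bernstein, fits inside $\mathcal{B}^{\varepsilon}_{p,2,2}$.
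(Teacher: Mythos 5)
Your overall strategy matches the paper's: Banach fixed point (equivalently, the Bejenaru--Tao framework for quantitative well-posedness), splitting the data across the sum space, Strichartz plus Theorem~\ref{thm:LocalSmoothingHardy} for the linear estimate, and H\"older-in-time to extract a power of $T$. For the $L^4$-based case your argument is essentially the paper's, differing only cosmetically: you push the Duhamel term into $\dot{H}^{1/2}$ via the wave energy bound and the embedding $L^{4/3}(\R^2)\hookrightarrow\dot{H}^{-1/2}(\R^2)$ and then Littlewood--Paley to land in $\dot{H}^{3/8}$ at high frequencies, whereas the paper directly writes $\langle D\rangle^{-5/8}$ and applies the Sobolev embedding $L^{4/3}\hookrightarrow W^{-5/8,2}$; both close. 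The observation that the low-frequency part of the Duhamel term lands trivially in the sum space (Bernstein plus the fact that $\ph_\w$ vanishes near $\xi=0$) is also how the paper deals with the $1/\sqrt{-\Delta}$ singularity.

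There is, however, a genuine gap in your treatment of the $\dot{H}^{1/2}$-slot for the $L^6$-based case \eqref{eq:LWPNLW2}. You want to estimate $\|u^3\|_{L^1_t\dot{H}^{-1/2}}$ via $L^{4/3}(\R^2)\hookrightarrow\dot{H}^{-1/2}(\R^2)$, which forces $\|u^3\|_{L^1_t L^{4/3}_x}=\|u\|_{L^3_t L^4_x}^3$. But the iteration norm is $L^4_t L^6_x$, and $L^6_x\not\subseteq L^4_x$, so this is not controlled. The proposed fix --- splitting $u(t)=u_B(t)+u_H(t)$ according to $\mathcal{B}^\varepsilon_{6,2,2}+\dot{H}^{1/2}$ --- does not repair this: the decomposition at each time does not give you $L^4_t L^6_x$ control of the \emph{pieces}, so the cross terms like $u_B^2 u_H$ cannot be placed in $L^{4/3}_x$ with the claimed inputs (H\"older on $L^6_x\cdot L^6_x\cdot L^4_x$ lands in $L^{12/7}_x$, not $L^{4/3}_x$). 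The paper avoids this entirely by never passing through $L^{4/3}_x$ in the $p=6$ case: instead it estimates the high-frequency Duhamel term by $\||D|^{-1/2}(1-\rho(D))u^3\|_{L^1_tL^2_x}\lesssim\|u^3\|_{L^1_tL^2_x}$, exploiting that $|D|^{-1/2}(1-\rho(D))$ is bounded on $L^2$, and then H\"older gives $\|u^3\|_{L^1_t L^2_x}\lesssim T^{1/4}\|u\|_{L^4_tL^6_x}^3$; the low frequencies are handled separately by Plancherel. Replacing your $L^{4/3}_x\hookrightarrow\dot{H}^{-1/2}$ route by this $L^2_x$-based bound (which matches your own H\"older computation $\||u|^2u\|_{L^{4/3}_tL^2_x}=\|u\|_{L^4_tL^6_x}^3$) closes the argument and renders the ad hoc split $u=u_B+u_H$ unnecessary.
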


For the definition of analytic well-posedness we refer to Section \ref{subsec:prelim}. Roughly speaking, for each $(f,g)\in X\times Y$ we obtain a time of existence  $T(\|(f,g)\|_{X\times Y})$, for which there is a unique solution $u$ in $S_{T}$ which depends analytically on the initial data. Moreover, for any $T>0$, there exists some $\veps=\veps(T)>0$ such that \eqref{eq:NLWIntro} is well posed in $S_{T}$ whenever $\|(f,g)\|_{X\times Y}<\veps$.

\begin{remark}
\label{rem:LWP}
The homogeneous Sobolev spaces $\dot{H}^{3/8}(\R^2)$ and $\dot{H}^{-5/8}(\R^2)$ in \eqref{eq:LWPNLW1} can be replaced by the inhomogeneous spaces $H^s(\R^2)=W^{s,2}(\R^2)$ and $H^{s-1}(\R^2)$, for $s \geq 3/8$, and similarly for \eqref{eq:LWPNLW2}.
Furthermore, the arguments from the proof yield local well-posedness for initial data in
\begin{equation}\label{eq:L4data}
X \times Y = W^{\veps,4}(\R^2) \times W^{-1+\veps,4}(\R^2)
\end{equation}
with solutions in $S_T = L_t^{24/7}([0,T],L^4(\R^2))$, and for
\begin{equation*}
X \times Y = W^{1/6+\veps,6}(\R^2) \times W^{-5/6+\veps,6}(\R^2), 
\end{equation*}
with solution space $S_T = L_t^4([0,T],L^6(\R^2))$. By the embeddings in \eqref{eq:EmbeddingRefinement}, we have
\begin{equation*}
W^{1/6+3\veps}(\R^2) \subseteq \mathcal{B}^{2\veps}_{6,6,6}(\R^2) \subseteq \mathcal{B}^{\veps}_{6,2,2}(\R^2),
\end{equation*}
which shows that a local well-posedness result with initial data in $\Ba^{s+\veps}_{6,2,2}(\R^2)$ supersedes one involving $W^{s+1/6+\veps}(\R^2)$. %proved in Theorem \ref{thm:LWPNLW} supersedes a local well-posedness result with initial data in $W^{1/6+\varepsilon,6}(\R^2) \times W^{-5/6+\varepsilon,6}(\R^2)$. 
This is not quite the case for the $L^4$-based result because one has the sharp embeddings
\begin{equation*}
W^{1/8+3\varepsilon,4}(\R^2) \subseteq \mathcal{B}^{2\varepsilon}_{4,4,4}(\R^2) \subseteq \mathcal{B}^\varepsilon_{4,2,2}(\R^2).
\end{equation*}
It appears that this mismatch of $1/8$ derivatives reflects the fact that $\ell^2$-decoupling does not imply the local smoothing conjecture. It would be very interesting to eventually translate this additional smoothing effect to adapted function spaces. However, it follows from Proposition \ref{prop:Sobolev2} and from the sharpness of the results in \cite{Rozendaal2021LocalSmoothing} that such an additional smoothing effect cannot be captured by $\Ba^{s}_{p,q,q}(\Rd)$ for $q\leq p$ (see also Remark \ref{rem:sharpness2}). We do note that, since \eqref{eq:LocalSmoothingHardySpaces} complements the local smoothing conjecture, the well-posedness result in Theorem \ref{thm:LWPNLW} neither follows from one involving \eqref{eq:L4data}, nor does it imply it.
\end{remark}
We also show local well-posedness for slower decaying initial data, i.e., with initial data in spaces $\mathcal{B}^{s(p)}_{p,2,2}(\R^d) \times \mathcal{B}^{s(p)-1}_{p,2,2}(\R^d)$ and $p=4n+2$, $n \in \Z_{\geq 2}$, $d \in \{2,3\}$. The more technical result is stated in Theorem \ref{thm:SlowlyDecayingData}. 

In Theorems \ref{thm:GlobalNLW} and \ref{thm:GWPSlowerDecay} we prove global well-posedness in the defocusing case. Global well-posedness in $L^2$-based Sobolev spaces typically follows from conserved quantity. This does not fit well into the $L^p$-scale. Instead, we show global well-posedness by adapting arguments of Dodson--Soffer--Spencer \cite{DodsonSofferSpencer2021}; see also \cite{Schippa2022,KlausKunstmann2021}.

\subsection*{Generalizations}

The presented arguments are robust in nature and allow one to treat more general nonlinearities than $\alpha=3$ (see, e.g.,~Theorem \ref{thm:LWPQuinticNLW}). One can also consider higher dimensions $d \geq 3$, albeit in this case with a different derivative parameter $s$.

Moreover, the arguments transpire to the variable-coefficient case. Consider the nonlinear wave equation on a compact Riemannian manifold $(M,g)$ with $\dim M \geq 2$:
\begin{equation}
\label{eq:VariableCoefficientWaveEquation}
\begin{cases}
\partial_t^2 u &= \Delta_g u \pm |u|^{\alpha - 1} u, \quad (t,x) \in \R \times M, \\
u(0) &= f_1 \in X, \quad \dot{u}(0) = f_2 \in Y.
\end{cases}
\end{equation}
For $X \times Y \in W^{s,p}(M) \times W^{s-1,p}(M)$, $\dim M =2$, we can argue as in the proof of Theorem \ref{thm:LWPNLW}, because both local smoothing and Strichartz estimates remain true in the variable coefficient case. Indeed, variable-coefficient decoupling was proved by Beltran--Hickman--Sogge \cite{BeltranHickmanSogge2020} and local-in-time Strichartz estimates remain true on compact manifolds, as proved by Kapitanskii \cite{Kapitanskii1989,Kapitanskii1989II}. These are the key ingredients for the iteration argument in Section \ref{section:LWPNLW}. 

For an extension of the results on nonlinear equations with initial data in adapted spaces, one would have to find a suitable definition on compact manifolds. On the other hand, to prove global results it seems natural to work with spaces of initial data which are invariant under more general FIOs. Indeed, the solution operator to the linear part of \eqref{eq:VariableCoefficientWaveEquation} is a Fourier integral operator, an observation which goes back to Lax \cite{Lax1957}. This motivated the pioneering works by Seeger--Sogge--Stein \cite{SeegerSoggeStein1991} and Mockenhaupt--Seeger--Sogge \cite{MockenhauptSeegerSogge1993} on the fixed-time and space-time mapping properties of FIOs.  It is unclear whether $\mathcal{B}^s_{p,2,2}(\Rd)$ is invariant under more general FIOs, but $\Hps$ is. Moreover, one could solve nonlinear wave equations with initial data in $\Hps$ in the same manner as we do for $\mathcal{B}^s_{p,2,2}(\Rd)$.

Our goal in this article is not to develop a full theory of Besov spaces adapted to the half-wave group, as has been done for the Hardy spaces for FIOs in \cite{HassellPortalRozendaal2020,Rozendaal2021,FaLiRoSo19}. The advantage of working with Besov spaces is that it suffices to obtain estimates on dyadic frequency annuli, instead of working with square functions. On the other hand, one only recovers the sharp fixed-time regularity for wave equations in the Besov scale, cf.~\eqref{eq:SobolevBesovIntro}, as opposed to the $L^{p}$-scale, cf.~\eqref{eq:Sobolevintro}.

\subsection*{Organization} In Section \ref{section:FunctionSpaces} we introduce the function spaces $\Baspqr$, and we determine some of their properties. In Section \ref{sec:invariance} we show that $\Baspqq$ is invariant under the action of the half-wave group, cf.~\eqref{eq:PolynomialGrowthIntro}, and we obtain product estimates. In Section \ref{section:LocalSmoothing} we prove the local smoothing estimates in Theorem \ref{thm:LocalSmoothingHardy}. Using these, in Section \ref{section:LWPNLW} we derive local well-posedness results, and in particular Theorem \ref{thm:LWPNLW}. In Section \ref{subsection:GlobalNLW} we use a blow-up alternative to prove global well-posedness in the defocusing case.

\subsection*{Notation}

The natural numbers are $\N:=\{1,2,\ldots\}$, and we write $\N_{0}:=\N\cup\{0\}$. Throughout most of this article we fix a general dimension $d\in\N$ with $d\geq2$, but in Section \ref{section:LWPNLW} we will typically assume that $d\in \{2,3\}$.

For $\xi\in\Rd$ we write $\lb\xi\rb=(1+|\xi|^{2})^{1/2}$, and $\hat{\xi}=\xi/|\xi|$ if $\xi\neq0$. We use multi-index notation, where $\partial_{\xi}=(\partial_{\xi_{1}},\ldots,\partial_{\xi_{d}})$ and $\partial^{\alpha}_{\xi}=\partial^{\alpha_{1}}_{\xi_{1}}\ldots\partial^{\alpha_{d}}_{\xi_{d}}$  
for $\xi=(\xi_{1},\ldots,\xi_{d})\in\Rd$ and $\alpha=(\alpha_{1},\ldots,\alpha_{d})\in\N_{0}^{d}$. The Fourier transform of $f\in\Sw'(\Rd)$ is denoted by $\F f$ or $\widehat{f}$, and the Fourier multiplier with symbol $\ph\in\Sw'(\Rd)$ is denoted by $\ph(D)$. 

We write $f(s)\lesssim g(s)$ to indicate that $f(s)\leq Cg(s)$ for all $s$ and a constant $C>0$ independent of $s$, and similarly for $f(s)\gtrsim g(s)$ and $g(s)\eqsim f(s)$.

\section{Function spaces}
\label{section:FunctionSpaces}

In this section we introduce the relevant function spaces for this article, and we derive some of their properties, most notably equivalent norms and embeddings. 

\subsection{Definitions}\label{subsec:def}

We first recall the definition of the Hardy spaces for FIOs from  \cite{Smith1998,HassellPortalRozendaal2020}. Fix a non-negative radial $\varphi \in C^\infty_c(\R^d)$ such that $\varphi(\xi) = 0$ for $|\xi| > 1$, and $\varphi \equiv 1$ in a neighbourhood of zero. For $\omega \in \mathbb{S}^{n-1}$, $\sigma > 0$, and $\xi \in \R^d \setminus \{0\}$, set $\varphi_{\omega,\sigma}(\xi) := c_\sigma \varphi \big( \frac{\hat{\xi} - \omega}{\sigma^{1/2}} \big)$, where $c_\sigma := \big( \int_{\mathbb{S}^{d-1}} \varphi \big( \frac{e_1 - \nu}{\sigma^{1/2}} \big)^2 d\nu \big)^{-1/2}$ for $e_1 = (1,0,\ldots,0)$. Furthermore, we set $\varphi_{\omega,\sigma}(0) := 0$. Let $\psi \in C^\infty_c(\R^d)$ be a non-negative radial function such that $\psi(\xi) = 0$ if $|\xi| \notin [1/2,2]$, with $\psi(\xi) = 0$ if $|\xi| \notin [1/2,2]$, and
\begin{equation*}
\int_0^\infty \psi(\sigma \xi)^2 \frac{d \sigma}{\sigma} = 1 \text{ for all } \xi \neq 0.
\end{equation*}
Let $\varphi_\omega(\xi) := \int_0^4 \psi(\sigma \xi) \varphi_{\omega, \sigma}(\xi) \frac{d \sigma}{\sigma}$. Recall the following properties of $\ph_{\w}\in C^{\infty}(\Rd)$ from \cite[Remark 3.3]{Rozendaal2021}:
\begin{itemize}
\item[(1)] For all $\omega \in \mathbb{S}^{d-1}$ and $\xi \neq 0$ one has $\varphi_\omega(\xi) = 0$ if $|\xi| < 1/8$ or $|\hat{\xi} - \omega| > 2 |\xi|^{-1/2}$.
\item[(2)] For all $\alpha \in \N^d_0$ and $\beta \in \N_0$ there exists $C_{\alpha,\beta} \geq 0$ such that
\begin{equation*}
| (\omega \cdot \partial_\xi)^\beta \partial_\xi^\alpha \varphi_\omega(\xi)| \leq C_{\alpha,\beta} |\xi|^{\frac{d-1}{4}-\frac{|\alpha|}{2}-\beta}
\end{equation*}
for all $\omega \in \mathbb{S}^{d-1}$ and $\xi \neq 0$.
\item[(3)] For all $\alpha \in \N^d_0$ there exists a $C_\alpha \geq 0$ such that
\begin{equation*}
\Big| \partial_\xi^\alpha \Big( \int_{\mathbb{S}^{d-1}} \varphi_\omega(\xi) d\omega \Big)^{-1} \Big| \leq C_\alpha |\xi|^{\frac{d-1}{4}-|\alpha|}
\end{equation*}
for all $\xi \in \R^d$ with $|\xi| \geq 1/2$. Hence there is an $m \in S^{(d-1)/4}(\R^d)$ such that, if $f \in \mathcal{S}'(\R^d)$ satisfies $\text{supp}(\hat{f}) \subseteq \{ \xi \in \Rd\mid |\xi| \geq 1/2 \}$, then
\begin{equation}\label{eq:repro}
f = \int m(D) \varphi_\nu(D) f d\nu.
\end{equation}
\end{itemize}
For simplicity of notation, we write $\mathcal{H}^p(\R^d) = L^p(\R^d)$ for $1<p<\infty$, and 
$\mathcal{H}^1(\R^d)$ is the classical local Hardy space.  Fix a $\rho\in C^{\infty}_{c}(\Rd)$ such that $\rho(\xi)=1$ for $|\xi|\leq 2$.

We define the Hardy spaces for FIOs as follows.

\begin{definition}\label{def:HpFIO}
For $p \in [1,\infty)$ and $s\in\R$, let $\Hps$ consist of all $f \in \mathcal{S}'(\R^d)$ such that $\rho(D) f \in L^p(\Rd)$, $\langle D \rangle^s \varphi_\omega(D) f \in \mathcal{H}^{p}(\R^d)$ for almost all $\omega \in \mathbb{S}^{d-1}$, and
\begin{equation*}
\| f \|_{\Hps} := \| \rho(D) f \|_{L^p(\R^d)} + \Big( \int_{\mathbb{S}^{d-1}} \| \langle D \rangle^s \varphi_\omega(D) f \|^p_{\mathcal{H}^p(\R^d)} d\omega \Big)^{1/p} < \infty.
\end{equation*}
Moreover, $\HT^{s,\infty}_{FIO}(\Rd):=(\HT^{-s,1}_{FIO}(\Rd))^{*}$.
\end{definition}

In fact, $\Hp$ was originally defined in \cite{Smith1998,HassellPortalRozendaal2020} using conical square function estimates over the cosphere bundle. This includes an intrinsic definition of $\HT^{\infty}_{FIO}(\Rd)$ in terms of Carleson measures. The equivalent characterization in Definition \ref{def:HpFIO} was obtained in \cite{FaLiRoSo19,Rozendaal2021}.

Recall that, for $p,r\in[1,\infty]$ and $s\in\R$, the Besov space $\Bspq$ consists of those $f\in\Sw'(\Rd)$ such that
\[
\|f\|_{\Bspq}:=\Big(\sum_{k=0}^{\infty}2^{srk}\|\psi_{k}(D)f\|_{L^{p}(\Rd)}^{r}\Big)^{1/r}<\infty.
\]
Here and throughout,  $(\psi_{k})_{k=0}^{\infty}\subseteq C^{\infty}_{c}(\Rd)$ is a fixed Littlewood-Paley decomposition, with $\supp(\psi_{k})\subseteq \{\xi\in\Rd\mid 2^{k-1}\leq |\xi|\leq 2^{k+1}\}$ for $k\in\N$, and $\sum_{k=0}^{\infty}\psi_{k}(\xi)=1$ for all $\xi\neq 0$. 

We consider the following Besov variant of the Hardy spaces for FIOs.

\begin{definition}\label{def:Besov}
Let $p,r\in[1,\infty]$, $q\in [1,\infty)$ and $s\in\R$, Then $\Baspqr$ consists of all $f \in \mathcal{S}'(\R^d)$ such that $\rho(D) f \in L^p(\Rd)$, $ \varphi_\omega(D) f \in \Bspq$ for almost all $\omega \in \mathbb{S}^{d-1}$, and
\[
%\begin{equation}\label{eq:normdef}
\| f \|_{\Baspqr} := \| \rho(D) f \|_{L^p(\R^d)} + \Big( \int_{\mathbb{S}^{d-1}} \|\varphi_\omega(D) f \|^q_{B^{s}_{p,r}(\Rd)} d\omega \Big)^{1/q} < \infty.
%\end{equation}
\]
\end{definition}

We will mostly deal with the case where $q=r$. Then 
the following lemma allows one to reduce various arguments to dyadic frequency annuli. 

\begin{lemma}\label{lem:dyadic}
Let $p\in[1,\infty]$, $q\in[1,\infty)$ and $s\in\R$. Then there exists a $C>0$ such that the following holds. An $f\in\Sw'(\Rd)$ satisfies $f\in \Baspqq$ if and only if $(\sum_{k=0}^{\infty}\|\psi_{k}(D)f\|_{\Baspqq}^{q})^{1/q}<\infty$, in which case
\begin{equation}\label{eq:dyadic}
\frac{1}{C}\|f\|_{\Baspqq}\leq \Big(\sum_{k=0}^{\infty}\|\psi_{k}(D)f\|_{\Baspqq}^{q}\Big)^{1/q}\leq C\|f\|_{\Baspqq}.
\end{equation}
\end{lemma}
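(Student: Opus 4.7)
The plan is to expand both sides of the claimed equivalence using the definitions of $\Baspqq$ and $\Bspq$, then exploit the fact that $\psi_{j}(D)\psi_{k}(D)=0$ whenever $|j-k|\geq 2$, so that all double sums over $j,k$ collapse to the diagonal band $|j-k|\leq 1$. Concretely, since $r=q$, unwinding the definitions yields
\[
\|f\|_{\Baspqq}^{q}\eqsim \|\rho(D)f\|_{L^{p}}^{q}+\sum_{j=0}^{\infty}2^{sqj}\int_{\Sd}\|\psi_{j}(D)\ph_{\w}(D)f\|_{L^{p}}^{q}\,d\w,
\]
and for each $k$,
\[
\|\psi_{k}(D)f\|_{\Baspqq}^{q}\eqsim \|\rho(D)\psi_{k}(D)f\|_{L^{p}}^{q}+\sum_{|j-k|\leq 1}2^{sqj}\int_{\Sd}\|\psi_{j}(D)\ph_{\w}(D)\psi_{k}(D)f\|_{L^{p}}^{q}\,d\w,
\]
where in the latter sum I have already used that $\psi_{j}\psi_{k}=0$ for $|j-k|\geq 2$.

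For the upper bound $\sum_{k}\|\psi_{k}(D)f\|_{\Baspqq}^{q}\lesssim \|f\|_{\Baspqq}^{q}$, I would commute the Fourier multipliers to write $\psi_{j}(D)\ph_{\w}(D)\psi_{k}(D)=\psi_{k}(D)\psi_{j}(D)\ph_{\w}(D)$, and then invoke the uniform boundedness of $\psi_{k}(D)$ on $L^{p}(\Rd)$ (which follows from $\|\F^{-1}\psi_{k}\|_{L^{1}(\Rd)}\lesssim 1$ via scaling, and which is valid for all $p\in[1,\infty]$). This reduces the main term to $\sum_{k}\sum_{|j-k|\leq 1}2^{sqj}\int_{\Sd}\|\psi_{j}(D)\ph_{\w}(D)f\|_{L^{p}}^{q}\,d\w$, and swapping the sums over $k$ and $j$ contributes only a factor of $3$. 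The low-frequency piece is handled by noting that $\rho(D)\psi_{k}(D)=0$ for $k$ larger than some absolute constant $K_{0}$ (depending on the supports of $\rho$ and $\psi_{k}$), so the sum in $k$ is finite, and each surviving term is dominated by $\|\rho(D)f\|_{L^{p}}^{q}$ by the same $L^{p}$-boundedness of $\psi_{k}(D)$.

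For the lower bound $\|f\|_{\Baspqq}^{q}\lesssim \sum_{k}\|\psi_{k}(D)f\|_{\Baspqq}^{q}$, I would decompose $\psi_{j}(D)\ph_{\w}(D)f=\sum_{|k-j|\leq 1}\psi_{j}(D)\ph_{\w}(D)\psi_{k}(D)f$ (again using $\psi_{j}\psi_{k}=0$ for $|j-k|\geq 2$), apply the triangle inequality in $\ell^{q}$ to pass to $q$-th powers with a bounded combinatorial loss, and then swap sums. For the low-frequency piece, since $\rho(D)f=\sum_{k\leq K_{0}}\rho(D)\psi_{k}(D)f$ is a finite sum, the triangle inequality in $L^{p}$ followed by H\"older in $k$ gives $\|\rho(D)f\|_{L^{p}}^{q}\lesssim \sum_{k}\|\rho(D)\psi_{k}(D)f\|_{L^{p}}^{q}$.

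The argument is essentially a routine Littlewood--Paley bookkeeping, so there is no substantial obstacle; the only delicate points are (i) the consistent use of commutativity of Fourier multipliers together with the uniform $L^{p}$-bound on $\psi_{k}(D)$, valid even at the endpoints $p\in\{1,\infty\}$, and (ii) the careful treatment of the low-frequency $\rho(D)$-term, which cannot be absorbed into the $\ph_{\w}(D)$-part because $\ph_{\w}$ vanishes on $\{|\xi|<1/8\}$. With these two observations in place, the double-sum expansion closes in both directions, yielding the norm equivalence \eqref{eq:dyadic}.
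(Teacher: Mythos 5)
Your proof follows the same route as the paper's: expand $\|f\|_{\Baspqq}^{q}$ and $\|\psi_{k}(D)f\|_{\Baspqq}^{q}$ using the definitions, exploit the almost-disjointness $\psi_{j}\psi_{k}=0$ for $|j-k|\geq 2$ together with the uniform $L^{1}$-bound on $\mathcal{F}^{-1}\psi_{k}$ to collapse the double sum to the diagonal band, and treat the low-frequency $\rho(D)$-term separately via the finiteness of the sum over $k\leq K_{0}$. The only cosmetic difference is that where you invoke the triangle inequality in $\ell^{q}$ over the finite band $|k-j|\leq 1$, the paper phrases the same step as an application of H\"older's inequality; the substance is identical.
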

\begin{proof}
First note that
\begin{equation}\label{eq:littleHolder}
\|g\|_{\Baspqq}\eqsim \Big(\| \rho(D) f \|_{L^p(\R^d)}^{q} + \int_{\mathbb{S}^{d-1}} \|\varphi_\omega(D) f \|^q_{B^{s}_{p,q}(\Rd)} d\omega \Big)^{1/q}
\end{equation}
for all $g\in\Sw'(\Rd)$ such that either of these quantities is finite.

Let $f\in\Baspqq$. Using \eqref{eq:littleHolder} and Fubini's lemma, one can bound the middle term in \eqref{eq:dyadic} by a multiple of
\[
\Big(\sum_{k=0}^{\infty}\|\rho(D)\psi_{k}(D)f\|_{L^{p}(\Rd)}^{q}+\int_{\mathbb{S}^{d-1}}\sum_{l=0}^{\infty}2^{sql}\|\psi_{l}(D)\ph_{\w}(D)\psi_{k}(D)f\|_{L^{p}(\Rd)}^{q}d\w\Big)^{1/q}.
\]
Moreover, there exists a $k_{0}\in\Z_{+}$ such that $\psi_{k}(D)\rho(D)f=0$ for all $k>k_{0}$. Since the $\F^{-1}(\psi_{k})$ are uniformly bounded in $L^{1}(\Rd)$, we thus obtain
\[
\sum_{k=0}^{\infty}\|\rho(D)\psi_{k}(D)f\|_{L^{p}(\Rd)}^{q}\lesssim \sum_{k=0}^{k_{0}}\|\rho(D)f\|_{L^{p}(\Rd)}^{q}\lesssim \|\rho(D)f\|_{L^{p}(\Rd)}^{q}.
\] 
Using also the support properties of the $\psi_{k}$, one obtains
\begin{align*}
&\quad\sum_{k=0}^{\infty}\int_{\mathbb{S}^{d-1}}\sum_{l=0}^{\infty}2^{sql}\|\psi_{l}(D)\ph_{\w}(D)\psi_{k}(D)f\|_{L^{p}(\Rd)}^{q}d\w\\
&=\int_{\mathbb{S}^{d-1}}\sum_{l=0}^{\infty}2^{sql}\sum_{k=l-1}^{l+1}\|\psi_{k}(D)\psi_{l}(D)\ph_{\w}(D)f\|_{L^{p}(\Rd)}^{q}d\w\\
&\lesssim \int_{\mathbb{S}^{d-1}}\sum_{l=0}^{\infty}2^{sql}\|\psi_{l}(D)\ph_{\w}(D)f\|_{L^{p}(\Rd)}^{q}d\w=\int_{\mathbb{S}^{d-1}}\|\ph_{\w}(D)f\|_{B^{s}_{p,q}(\Rd)}^{q}d\w.
\end{align*}
Combined with \eqref{eq:littleHolder}, this proves the second inequality in \eqref{eq:dyadic}.

On the other hand, for each $f\in\Sw'(\Rd)$ one has $f=\sum_{k=0}^{\infty}\psi_{k}(D)f$, and thus
\[
\|\rho(D)f\|_{L^{p}(\Rd)}\leq \sum_{k=0}^{k_{0}}\|\rho(D)\psi_{k}(D)f\|_{L^{p}(\Rd)}\lesssim \Big(\sum_{k=0}^{\infty}\|\rho(D)\psi_{k}(D)f\|_{L^{p}(\Rd)}^{q}\Big)^{1/q}.
\]
Similarly, H\"{o}lder's inequality and the support properties of the $\psi_{k}$ combine to yield
\begin{align*}
&\quad\int_{\mathbb{S}^{d-1}}\|\ph_{\w}(D)f\|_{B^{s}_{p,q}(\Rd)}^{q}d\w=\int_{\mathbb{S}^{d-1}}\sum_{l=0}^{\infty}2^{sql}\Big\|\sum_{k=l-1}^{k+1}\psi_{l}(D)\ph_{\w}(D)\psi_{k}(D)f\Big\|_{L^{p}(\Rd)}^{q}d\w\\
&\lesssim \sum_{k=0}^{\infty}\int_{\mathbb{S}^{d-1}}\sum_{l=0}^{\infty}2^{sql}\|\psi_{l}(D)\ph_{\w}(D)\psi_{k}(D)f\|_{L^{p}(\Rd)}^{q}d\w\\
&=\sum_{k=0}^{\infty}\int_{\mathbb{S}^{d-1}}\|\ph_{\w}(D)\psi_{k}(D)f\|_{B^{s}_{p,q}(\Rd)}^{q}d\w.
\end{align*}
Together with \eqref{eq:littleHolder}, this proves the first inequality in \eqref{eq:dyadic}.
\end{proof}

\subsection{An equivalent norm}\label{subsec:norm}

For each $k\in \N_{0}$, fix a maximal collection $\Theta_{k}\subseteq \Su^{d-1}$ of unit vectors such that $|\nu-\nu'|\geq 2^{-k/2}$ for all $\nu,\nu'\in \Theta_{k}$. Let $(\chi_{\nu})_{\nu\in\Theta_{k}}\subseteq C^{\infty}(\Rd\setminus\{0\})$ be an associated partition of unity. That is, each $\chi_{\nu}$ is homogeneous of order $0$ and satisfies $0\leq \chi_{\nu}\leq 1$ and $\supp(\chi_{\nu})\subseteq\{\xi\in\Rd\mid |\hat{\xi}-\nu|\leq 2^{1-k/2}\}$. Moreover, $\sum_{\nu\in \Theta_{k}}\chi_{\nu}(\xi)=1$ for all $\xi\neq0$, and for all $\alpha\in\N_{0}^{d}$ and $\beta\in\N_{0}$ there exists a $C_{\alpha,\beta}\geq0$ independent of $N$ such that, if $2^{-1+k}\leq |\xi|\leq 2^{1+k}$, then
\[
|(\hat{\xi}\cdot\partial_{\xi})^{\beta}\partial_{\xi}^{\alpha}\chi_{\nu}(\xi)|\leq C_{\alpha,\beta}2^{-k(|\alpha|/2+\beta)}
\]
for all $\nu\in \Theta_{k}$. Also write $\chi_{\nu}^{k}:=\chi_{\nu}\psi_{k}$ for $k\in\N_{0}$ and $\nu\in\Theta_{k}$, so that 
\begin{equation}\label{eq:decompose}
f=\sum_{k=0}^{\infty}\sum_{\nu\in\Theta_{k}}\chi_{\nu}^{k}(D)f
\end{equation}
for $f\in\Sw'(\Rd)$. It follows from integration by parts that
\begin{equation}\label{eq:kernelbounds}
\|\F^{-1}(\chi_{\nu}^{k})\|_{L^{p}(\Rd)}\lesssim 2^{k\frac{n+1}{2p'}}
\end{equation}
for all $p\in[1,\infty]$, with an implicit constant independent of $k$ and $\nu$.

We can now give a discrete description of the $\Baspqq$-norm. For $p=q$, the first statement in the following proposition is \cite[Proposition 4.1]{Rozendaal2021LocalSmoothing}.

\begin{proposition}\label{prop:discrete}
Let $p\in[1,\infty]$, $q\in[1,\infty)$ and $s \in \R$. Then there exists a $C>0$ such that the following holds. Let $f\in \Sw'(\Rd)$ be such that $\supp(\wh{f}\,)\subseteq\{\xi\in\Rd\mid 2^{k-1}\leq |\xi|\leq 2^{k+1}\}$ for some $k\in\N$. Then
\[
\frac{1}{C}\|f\|_{\Baspqq}\leq 2^{k(s+\frac{d-1}{2}(\frac{1}{2}-\frac{1}{q}))}\Big(\sum_{\nu\in\Theta_{k}}\|\chi_{\nu}(D)f\|_{L^{p}(\Rd)}^{q}\Big)^{1/q}\leq C\|f\|_{\Baspqq},
\]
whenever one of these quantities is finite. Hence an $f \in \Sw'(\Rd)$ satisfies $f\in\Baspqq$ if and only if 
\begin{equation}\label{eq:equivalent}
\Big(\sum_{k=0}^{\infty}2^{qk(s+\frac{d-1}{2}( \frac{1}{2} - \frac{1}{q}))}\sum_{\nu\in\Theta_{k}}\| \chi_{\nu}^{k}(D) f \|^q_{L^{p}(\Rd)}\Big)^{1/q}
\end{equation}
is finite, and \eqref{eq:equivalent} defines an equivalent norm on $\Baspqq$.
\end{proposition}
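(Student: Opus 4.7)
The plan is to first establish the equivalence on a single dyadic annulus, namely to show that for $f\in\Sw'(\Rd)$ with $\supp(\wh f\,)\subseteq\{\xi\in\Rd\mid 2^{k-1}\leq|\xi|\leq 2^{k+1}\}$ one has
\[
\|f\|_{\Baspqq}\eqsim 2^{k(s+\frac{d-1}{2}(\frac{1}{2}-\frac{1}{q}))}\Big(\sum_{\nu\in\Theta_{k}}\|\chi_{\nu}(D)f\|_{L^{p}(\Rd)}^{q}\Big)^{1/q},
\]
and then to obtain the second assertion by applying Lemma \ref{lem:dyadic}. For such a frequency-localized $f$, the term $\|\rho(D)f\|_{L^{p}}$ is dominated by $\|f\|_{L^{p}}$, and hence by either side via Bernstein; moreover, since $\varphi_{\w}(D)f$ is also essentially supported in a dyadic annulus at scale $2^{k}$, the Besov norm reduces to $\|\varphi_{\w}(D)f\|_{B^{s}_{p,q}(\Rd)}\eqsim 2^{sk}\|\varphi_{\w}(D)f\|_{L^{p}(\Rd)}$ uniformly in $\w$. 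The task therefore reduces to comparing $\big(\int_{\Sd}\|\varphi_{\w}(D)f\|_{L^{p}}^{q}d\w\big)^{1/q}$ with the discrete spherical sum, with the mismatch in exponents accounted for by the angular cap measure $2^{-k(d-1)/2}$ and the pointwise size $|\varphi_{\w}(\xi)|\lesssim 2^{k(d-1)/4}$.

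The main technical input will be the uniform estimate
\[
\|\varphi_{\w}(D)g\|_{L^{p}(\Rd)}+\|\chi_{\nu}^{k}(D)m(D)g\|_{L^{p}(\Rd)}\lesssim 2^{k(d-1)/4}\|g\|_{L^{p}(\Rd)}
\]
for any $g$ with frequency support in $\{|\xi|\eqsim 2^{k}\}$, uniformly in $\w\in\Sd$ and $\nu\in\Theta_{k}$. This should follow from the symbol estimates in property (2) (and $m\in S^{(d-1)/4}(\Rd)$): integration by parts against a slightly fattened cutoff $\tilde\psi_{k}$ produces a kernel concentrated on a tube of dimensions $2^{-k}$ (radially) $\times 2^{-k/2}$ (transversally), with height $\lesssim 2^{k(d-1)/4}\cdot 2^{k(d+1)/2}$, so its $L^{1}$ norm is $\lesssim 2^{k(d-1)/4}$; Young's inequality then finishes.

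For the direction $\|f\|_{\Baspqq}\lesssim 2^{k(s+\frac{d-1}{2}(1/2-1/q))}\big(\sum_{\nu}\|\chi_{\nu}(D)f\|_{L^{p}}^{q}\big)^{1/q}$, I would write $\varphi_{\w}(D)f=\sum_{\nu}\varphi_{\w}(D)\chi_{\nu}^{k}(D)f$ and observe that for each $\w$ only the $O(1)$ caps with $|\nu-\w|\lesssim 2^{-k/2}$ contribute. The uniform bound above then yields
\[
\|\varphi_{\w}(D)f\|_{L^{p}}^{q}\lesssim 2^{kq(d-1)/4}\sum_{\nu:\,|\nu-\w|\lesssim 2^{-k/2}}\|\chi_{\nu}^{k}(D)f\|_{L^{p}}^{q},
\]
and integrating in $\w$ produces the cap measure $2^{-k(d-1)/2}$, which combines with $2^{kq(d-1)/4}$ to give exactly $2^{kq\frac{d-1}{2}(1/2-1/q)}$ after taking the $q$-th root. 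For the reverse direction, I would apply the reproducing formula \eqref{eq:repro} (permissible since $2^{k-1}\geq 1/2$) in the form $\chi_{\nu}^{k}(D)f=\int_{\Sd}\chi_{\nu}^{k}(D)m(D)\varphi_{\nu'}(D)f\,d\nu'$, restricted to $|\nu'-\nu|\lesssim 2^{-k/2}$ by the angular supports. Minkowski, the uniform bound on $\chi_{\nu}^{k}(D)m(D)$, and H\"{o}lder with exponents $q,q'$ on a cap of measure $\sim 2^{-k(d-1)/2}$ give
\[
\|\chi_{\nu}^{k}(D)f\|_{L^{p}}^{q}\lesssim 2^{k(d-1)(2-q)/4}\int_{|\nu'-\nu|\lesssim 2^{-k/2}}\|\varphi_{\nu'}(D)f\|_{L^{p}}^{q}d\nu',
\]
and summing over $\nu$ with the finite-overlap property of the $2^{-k/2}$-caps recovers the full spherical integral with the matching exponent $2^{-k\frac{d-1}{2}(1/2-1/q)}$. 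The principal obstacle is the uniform $L^{p}$-bound of the second paragraph; once established, the rest of the argument is H\"{o}lder and bookkeeping of exponents.
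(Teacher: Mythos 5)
Your proposal is correct and follows essentially the same route as the paper's proof: reduce to a single dyadic annulus, use Lemma \ref{lem:dyadic}, prove uniform $L^p$-bounds of size $2^{k(d-1)/4}$ for the angularly localized multipliers via kernel (tube) estimates and Young's inequality, and compare the continuous spherical integral with the discrete cap sum, invoking the reproducing formula \eqref{eq:repro} together with H\"older over caps of measure $\eqsim 2^{-k(d-1)/2}$ for the reverse direction. Your exponent bookkeeping (e.g.\ the factor $2^{k(d-1)(2-q)/4}$) matches the paper's, which is organized around the sets $E_{\nu}$ and the auxiliary cutoffs $\tilde{\chi}_{\nu}^{k}$ rather than decomposing $\varphi_{\omega}(D)f$ directly, but this is only a cosmetic difference.
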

\begin{proof}
It is straightforward to deal with the low frequencies, so we may assume that $\rho(D)f=0$. Moreover, by Lemma \ref{lem:dyadic}, the second statement follows from the first.

To prove the first statement, for each $\nu\in\Theta_{k}$, set
\begin{equation}\label{eq:packets}
\tilde{\chi}_{\nu}:=\sum_{|\nu'-\nu|\leq 2^{2-k/2}}\chi_{\nu'}\quad\text{and}\quad\tilde{\chi}_\nu^{k} := \sum_{l=k-1}^{k+1} \psi_{l}\tilde{\chi}_\nu,
\end{equation}
as well as $E_{\nu}:=\{\w\in \Su^{d-1}\mid |\w-\nu|\leq 2^{3-k/2}\}$. Then $\chi_{\nu}(D)f=\tilde{\chi}_{\nu}(D)\chi_{\nu}(D)f$ and 
\begin{align*}
\| \varphi_\omega(D) \chi_\nu(D) f \|_{B^{s}_{p,q}(\Rd)}&=\| \varphi_\omega(D) \tilde{\chi}_{\nu}(D)\chi_\nu(D) f \|_{B^{s}_{p,q}(\Rd)} \\
&\lesssim 2^{k(s+\frac{d-1}{4})} \| \chi_\nu(D) f \|_{L^{p}(\Rd)}
\end{align*}
for each $\w\in E_{\nu}$, by a kernel estimate. More precisely, one uses that, if $l\in\{k-1,k,k+1\}$, then $2^{-k(d-1)/4}\varphi_\omega \psi_{l}\tilde{\chi}_\nu$ satisfies the same bounds as $\chi^k_\nu$, in \eqref{eq:kernelbounds}. Hence
\begin{equation*}
\begin{split}
\Big( \int_{\mathbb{S}^{d-1}} \|\ph_\w(D) f \|^q_{B^{s}_{p,q}(\Rd)} d\omega \Big)^{1/q}&\lesssim  \Big( \sum_{\nu\in\Theta_{k}} \int_{E_{\nu}} \| \varphi_\omega(D)\chi_{\nu}(D)f \|_{B^{s}_{p,q}(\Rd)}^q d\omega \Big)^{1/q} \\
&\lesssim 2^{k(s+\frac{d-1}{4})} \Big( \sum_{\nu \in\Theta_{k}} |E_\nu|\, \| \chi_\nu(D) f \|^q_{L^p(\Rd)} \Big)^{1/q} \\
&\lesssim 2^{k(s+\frac{d-1}{2}( \frac{1}{2} - \frac{1}{q}))} \Big( \sum_{\nu \in\Theta_{k}} \| \chi_\nu(D) f \|^q_{L^p(\Rd)} \Big)^{1/q}.
\end{split}
\end{equation*}
This proves the first inequality in the first statement. 

For the other inequality, recall that the $\tilde{\chi}_{\nu}^{k}(D)\chi_{\nu}(D)$, $\nu\in\Theta_{k}$, satisfy bounds as in \eqref{eq:kernelbounds}. Hence we can use \eqref{eq:repro}, the triangle inequality, H\"{o}lder's inequality, and the condition on the frequency support of $f$, to write
\begin{align*}
&\quad\Big(\sum_{\nu\in\Theta_{k}}\|\chi_{\nu}(D)f\|_{L^{p}(\Rd)}^{q}\Big)^{1/q}=\Big(\sum_{\nu\in\Theta_{k}}\Big\|\int_{E_{\nu}}m(D)\ph_{\w}(D)\chi_{\nu}(D)fd\w\Big\|_{L^{p}(\Rd)}^{q}\Big)^{1/q}\\
&\lesssim \Big(\sum_{\nu\in\Theta_{k}}|E_{\nu}|^{q/q'}\int_{E_{\nu}}\|m(D)\ph_{\w}(D)\tilde{\chi}^{k}_{\nu}(D)\chi_{\nu}(D)f\|_{L^{p}(\Rd)}^{q}d\w\Big)^{1/q}\\
&\lesssim 2^{-k\frac{d-1}{2q'}}\Big(\sum_{\nu\in\Theta_{k}}2^{qk\frac{d-1}{4}}\int_{E_{\nu}}\|\ph_{\w}(D)f\|_{L^{p}(\Rd)}^{q}d\w\Big)^{1/q}\\
&\eqsim 2^{k(s+\frac{d-1}{2}(\frac{1}{q}-\frac{1}{2}))}\Big(\sum_{\nu\in\Theta_{k}}\int_{E_{\nu}}\|\ph_{\w}(D)f\|_{B^{s}_{p,q}(\Rd)}^{q}d\w\Big)^{1/q}.
\end{align*}
Finally, we can use that the $\nu\in \Theta_{k}$ are $2^{-k/2}$-separated to write
\[
\Big(\sum_{\nu\in\Theta_{k}}\int_{E_{\nu}}\|\ph_{\w}(D)f\|_{B^{s}_{p,q}(\Rd)}^{q}d\w\Big)^{1/q}\lesssim\Big(\int_{\Sd}\|\ph_{\w}(D)f\|_{B^{s}_{p,q}(\Rd)}^{q}d\w\Big)^{1/q}.
\]
This proves the second inequality in the first statement, and concludes the proof.
\end{proof}

\begin{remark}\label{rem:dyadicpar}
Let $f\in\Baspqq$ be such that 
\[
\supp(\wh{f}\,)\subseteq\{\xi\in\Rd\mid 2^{k-1}\leq |\xi|\leq 2^{k+1}, |\hat{\xi}-\nu|\leq 2^{1-k/2}\}
\]
for some $k\in \N$ and $\nu\in \Su^{d-1}$. Then Proposition \ref{prop:discrete} yields
\[
\|f\|_{\Baspqq}\eqsim 2^{k(s+\frac{d-1}{2}(\frac{1}{2}-\frac{1}{q}))}\|f\|_{L^{p}(\Rd)}\eqsim \|f\|_{B^{s+\frac{d-1}{2}(\frac{1}{2}-\frac{1}{q})}_{p,q}(\Rd)}.
\]
\end{remark}

\subsection{Embeddings}\label{subsec:embed}

We first obtain Sobolev embeddings into the Besov scale. Note that \eqref{eq:SobolevBesov} was already stated in \eqref{eq:SobolevBesovIntro} in the introduction, and recall the definition of $s(p)$ from \eqref{eq:sp}.

\begin{proposition}\label{prop:Sobolev}
Let $p\in[1,\infty)$ and $s\in\R$. Then
\begin{equation}\label{eq:SobolevBesov}
B^{s+s(p)}_{p,p}(\Rd)\subseteq\Basppp\subseteq B^{s-s(p)}_{p,p}(\Rd).
\end{equation}
Moreover, one has 
\begin{equation}\label{eq:SobolevBesov2}
B^{s+s(p)}_{p,p'}(\Rd)\subseteq \Ba^{s}_{p,p',p'}(\Rd),\quad 1< p\leq 2,
\end{equation}
and
\begin{equation}\label{eq:SobolevBesov3}
\Ba^{s}_{p,p',p'}(\Rd)\subseteq B^{s-s(p)}_{p,p'}(\Rd),\quad 2\leq p<\infty.
\end{equation}
\end{proposition}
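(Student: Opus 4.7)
The plan is to reduce to a single dyadic frequency annulus and then apply vector-valued complex interpolation between endpoints $p\in\{1,2,\infty\}$.

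By Lemma~\ref{lem:dyadic} (and the standard Besov analog), it suffices, for each $k\in\N$ and each $f$ with $\supp(\wh{f}\,)\subseteq\{|\xi|\sim 2^k\}$, to establish two-sided comparisons between $\|f\|_{L^p(\Rd)}$ and the vector-valued quantity $\big(\int_{\Sd}\|\ph_\omega(D)f\|_{L^p}^{q}\,d\omega\big)^{1/q}$, with sharp comparison constant $2^{ks(p)}$; the parameter $q$ is $p$ in \eqref{eq:SobolevBesov} and $p'$ in \eqref{eq:SobolevBesov2}, \eqref{eq:SobolevBesov3}. I would therefore study the vector-valued operator $T:f\mapsto (\ph_\omega(D)f)_\omega$ together with the left inverse $Sg:=\int m(D)g_\omega\,d\omega$ supplied by the reproducing formula \eqref{eq:repro}, and aim to show $\|Tf\|_{L^q(\Sd;L^p)}\lesssim 2^{ks(p)}\|f\|_{L^p}$ and $\|Sg\|_{L^p}\lesssim 2^{ks(p)}\|g\|_{L^q(\Sd;L^p)}$ on the annulus.

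For the endpoints $p\in\{1,\infty\}$, property~(2) of $\ph_\omega$ exhibits $\ph_\omega\tilde\psi_k$ as $2^{k(d-1)/4}$ times a plate-adapted bump, so rescaling the plate $\{|\xi|\sim 2^k,|\hat\xi-\omega|\lesssim 2^{-k/2}\}$ to unit size yields $\|\F^{-1}(\ph_\omega\tilde\psi_k)\|_{L^1(\Rd)}\lesssim 2^{k(d-1)/4}$, uniformly in $\omega$. This produces $T:L^p\to L^\infty(\Sd;L^p)$ and $T:L^p\to L^1(\Sd;L^p)$ with norm $\lesssim 2^{k(d-1)/4}$, and the analogous estimates for $S$ on annulus-supported inputs (using the order of $m$ and the finite measure of $\Sd$). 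At $p=2$, property~(3) of $\ph_\omega$ gives $\int_{\Sd}|\ph_\omega(\xi)|^{2}\,d\omega\eqsim 1$ on the annulus, so Plancherel yields $\|Tf\|_{L^2(\Sd;L^2)}\eqsim\|f\|_{L^2}$, and Cauchy--Schwarz in $\omega$ on the Fourier side yields $\|Sg\|_{L^2}\lesssim\|g\|_{L^2(\Sd;L^2)}$. Complex interpolation between $(L^2,L^\infty)$ or $(L^1,L^2)$ with appropriate third index then gives the claimed bound, because $\tfrac{d-1}{4}\big(1-\tfrac{2}{p}\big)=s(p)$ for $p\geq 2$, and dually for $p\leq 2$; the pair is chosen so that the interpolated target is $L^p(\Sd;L^p)$ for \eqref{eq:SobolevBesov} and $L^{p'}(\Sd;L^p)$ for \eqref{eq:SobolevBesov2}, \eqref{eq:SobolevBesov3}.

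The bound on $T$ immediately yields the embeddings $B^{s+s(p)}_{p,q}\subseteq\Ba^{s}_{p,q,q}$, while combining the bound on $S$ with the identity $ST=\mathrm{id}$ on the annulus (valid for $k\geq 1$) gives $\|f\|_{L^p}\lesssim 2^{ks(p)}\|Tf\|_{L^q(\Sd;L^p)}$, hence $\Ba^s_{p,q,q}\subseteq B^{s-s(p)}_{p,q}$. Summing over dyadic $k$ via Lemma~\ref{lem:dyadic} concludes the proof. The principal technical obstacle is verifying the plate kernel estimate $\|\F^{-1}(\ph_\omega\tilde\psi_k)\|_{L^1}\lesssim 2^{k(d-1)/4}$ with a constant uniform in $\omega$, together with the matching estimate for $m\ph_\omega\tilde\psi_k$ applied only to annulus-supported inputs; once these are in place, the rest is a routine application of vector-valued complex interpolation.
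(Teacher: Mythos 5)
Your overall strategy (reduce to a single dyadic annulus, prove endpoint estimates, interpolate) is in the same spirit as the paper's proof, but you work directly with the continuous $\varphi_\omega$ decomposition and the reproducing operator $S$, whereas the paper uses the discrete $\chi_\nu$ characterization (Proposition~\ref{prop:discrete}) and, for \eqref{eq:SobolevBesov}, simply cites the already-known Hardy-space embeddings \eqref{eq:Sobolevintro}.

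There is a genuine gap at the $L^2$ endpoint for $S$. You define $Sg=\int m(D)g_\omega\,d\omega$ with $m\in S^{(d-1)/4}$, so $|m(\xi)|\eqsim 2^{k(d-1)/4}$ on the annulus, and claim $\|Sg\|_{L^2}\lesssim\|g\|_{L^2(\Sd;L^2)}$ via Cauchy--Schwarz in $\omega$. For an arbitrary $g\in L^2(\Sd;L^2)$ the Cauchy--Schwarz must run over all of $\Sd$, giving only
\[
\|Sg\|_{L^2}^2=\int_{\Rd}|m(\xi)|^2\Big|\int_{\Sd}\widehat{g_\omega}(\xi)\,d\omega\Big|^2\,d\xi\lesssim 2^{k\frac{d-1}{2}}\|g\|_{L^2(\Sd;L^2)}^2,
\]
and the factor $2^{k(d-1)/4}$ is sharp (take $g_\omega$ independent of $\omega$). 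The $O(1)$ bound you need would require $\widehat{g_\omega}(\xi)$ to vanish unless $|\hat\xi-\omega|\lesssim 2^{-k/2}$, which holds on the range of $T$ but not for general $g$; interpolating only on that subcouple is a delicate point you would have to address. As written, all three endpoint bounds on $S$ carry the factor $2^{k(d-1)/4}$, so interpolation never sees the improvement that makes the embedding constant $2^{ks(p)}$ rather than $2^{k(d-1)/4}$.

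The clean fix within your framework is to abandon $S$ in favour of $T^*g=\int_{\Sd}\varphi_\omega(D)g_\omega\,d\omega$. Then Cauchy--Schwarz pairs against the $\varphi_\omega$ themselves: since $\int_{\Sd}\varphi_\omega(\xi)^2\,d\omega\eqsim 1$ on the annulus, $\|T^*g\|_{L^2}\lesssim\|g\|_{L^2(\Sd;L^2)}$ for \emph{all} $g$, and the $L^1(\Sd;L^\infty)\to L^\infty$ bound with constant $2^{k(d-1)/4}$ follows from the plate kernel estimate as you already argue for $T$. Although $T^*T\neq\mathrm{id}$, its symbol $\int_{\Sd}\varphi_\omega(\xi)^2\,d\omega$ is elliptic on the annulus, so $(T^*T)^{-1}$ is a bounded Mikhlin multiplier there, and $\|f\|_{L^p}\lesssim\|T^*Tf\|_{L^p}\leq\|T^*\|\,\|Tf\|_{L^q(\Sd;L^p)}$ gives the reverse embedding with the desired $2^{ks(p)}$. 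Alternatively, one can follow the paper's route: pass to the discrete cover $(\chi_\nu)_\nu$ via Proposition~\ref{prop:discrete}, where the reverse $L^\infty$ endpoint is literally the triangle inequality $\|f\|_{L^\infty}\leq\sum_\nu\|\chi_\nu(D)f\|_{L^\infty}$ because $\sum_\nu\chi_\nu\equiv 1$, and the forward $L^1$ endpoint is the uniform bound $\sup_\nu\|\chi_\nu(D)f\|_{L^1}\lesssim\|f\|_{L^1}$; neither $m$ nor a reproducing formula is then needed. Your forward bounds on $T$ and the interpolation scheme for the inclusions $B^{s+s(p)}_{p,q}\subseteq\Ba^{s}_{p,q,q}$ are correct as stated.
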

\begin{proof}
By Proposition \ref{prop:discrete}, we may prove the required statements for $f\in\Sw'(\Rd)$ which satisfy $\supp(\wh{f}\,)\subseteq\{\xi\in\Rd\mid 2^{-1+k}\leq |\xi|\leq 2^{1+k}\}$ for some $k\in\N$. By recalling that the Sobolev and Besov norms of a function with frequency support in a dyadic annulus are equivalent, \eqref{eq:SobolevBesov} now follows directly from \eqref{eq:Sobolevintro}. 

Next, \eqref{eq:SobolevBesov2} and \eqref{eq:SobolevBesov3} follow from \eqref{eq:SobolevBesov} for $p=2$. Hence, by Proposition \ref{prop:discrete} and interpolation, it suffices to note that
\[
\max_{\nu\in\Theta_{k}}\|\chi_{\nu}(D)f\|_{L^{1}(\Rd)}\lesssim \|f\|_{L^{1}(\Rd)},
\]
since the kernels of the $\chi_{\nu}$ are uniformly in $L^{1}(\Rd)$, cf.~\eqref{eq:kernelbounds}, and that
\[
\|f\|_{L^{\infty}(\Rd)}=\Big\|\sum_{\nu\in\Theta_{k}}\chi_{\nu}(D)f\Big\|_{L^{\infty}(\Rd)}\leq \sum_{\nu\in\Theta_{k}}\|\chi_{\nu}(D)\|_{L^{\infty}(\Rd)}.\qedhere
\]
\end{proof}

\begin{remark}\label{rem:sharpness}
The Sobolev exponents in \eqref{eq:SobolevBesov} are sharp, by Proposition \ref{prop:PolynomialGrowth} and because the half-wave propagators lose $2s(p)$ derivatives in the Besov scale. Sharpness of one of the embeddings also follows from Remark \ref{rem:dyadicpar}.
\end{remark}

By Proposition \ref{prop:Sobolev} and standard embeddings for Besov spaces (see \cite[Section 2.3.2]{Triebel10}), one has
\[
\Ba^{s(p)}_{p,p,p}(\Rd) \subseteq L^p(\Rd) \subseteq \Ba^{-s(p)}_{p,p',p'}(\Rd)
\]
for $1<p\leq 2$, and 
\begin{equation}\label{eq:EmbeddingLargep}
\Ba^{s(p)}_{p,p',p'}(\Rd) \subseteq L^p(\Rd) \subseteq \Ba^{-s(p)}_{p,p,p}(\Rd)
\end{equation}
for $2\leq p<\infty$. These embeddings are similar to embeddings for modulation spaces.

Next, we obtain embeddings within the scales of adapted Besov spaces and Hardy spaces for FIOs. Combined with the Sobolev embeddings for $\Hp$ in \eqref{eq:Sobolevintro}, this proposition implies \eqref{eq:EmbeddingRefinement}.

\begin{proposition}\label{prop:Sobolev2}
Let $p,r\in[1,\infty]$, $q\in[1,\infty)$ and $s\in\R$. Then 
\begin{equation}\label{eq:SobolevBesov4}
\Ba^{s}_{p,q_{2},r}(\Rd)\subseteq \Ba^{s}_{p,q_{1},r}(\Rd)
\end{equation}
and
\begin{equation}\label{eq:SobolevBesov6}
\Ba^{s+\frac{d-1}{2}(\frac{1}{q_{1}}-\frac{1}{q_{2}})}_{p,q_{1},q_{1}}(\Rd)\subseteq \Ba^{s}_{p,q_{2},q_{2}}(\Rd)
\end{equation}
for all $q_{1},q_{2}\in[1,\infty)$ with $q_{1}\leq q_{2}$. Moreover, one has
\begin{equation}\label{eq:SobolevBesov8}
\Ba^{s+\frac{d+1}{2}(\frac{1}{p_{1}}-\frac{1}{p_{2}})}_{p_{1},q,q}(\Rd)\subseteq \Ba^{s}_{p_{2},q,q}(\Rd)
\end{equation}
for all $p_{1},p_{2}\in[1,\infty]$ with $p_{1}\leq p_{2}$, and for all $t\in[1,\infty]$ and $\veps>0$ one has
\begin{equation}\label{eq:SobolevBesov7}
\Ba^{s+\veps}_{p,q,r}(\Rd)\subseteq \Ba^{s}_{p,q,t}(\Rd)
\end{equation}
and
\begin{equation}\label{eq:SobolevBesov5}
\Ba^{s+\veps}_{p,p,r}(\Rd)\subseteq \Hps\subseteq \Ba^{s-\veps}_{p,p,r}(\Rd). 
\end{equation}
\end{proposition}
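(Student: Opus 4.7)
The plan is to reduce each of the five embeddings to a comparison of the discrete angular--dyadic quantities $\|\chi_\nu(D) f\|_{L^p(\Rd)}$ via Proposition \ref{prop:discrete} and Lemma \ref{lem:dyadic}, after which each step is completed by a standard tool from Besov theory.

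I would first dispense with the two easiest cases. For \eqref{eq:SobolevBesov4}, since $\Sd$ has finite measure, H\"{o}lder's inequality gives $L^{q_2}(\Sd) \subseteq L^{q_1}(\Sd)$ when $q_1 \leq q_2$; applying this to $\w \mapsto \|\varphi_\w(D) f\|_{B^s_{p,r}(\Rd)}$ yields the embedding directly from Definition \ref{def:Besov}. For \eqref{eq:SobolevBesov7} I would invoke the standard Besov embedding $B^{s+\veps}_{p,r}(\Rd) \subseteq B^{s}_{p,t}(\Rd)$ for any $\veps>0$ and $r,t\in[1,\infty]$, which follows from $\ell^{r}\subseteq \ell^{\infty}$ together with a geometric-series estimate in the dyadic index, and then transfer this pointwise-in-$\w$ estimate under the sphere integral.

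For \eqref{eq:SobolevBesov6} and \eqref{eq:SobolevBesov8} the structural input is Proposition \ref{prop:discrete}, after which it suffices to treat $f$ with $\supp(\wh{f}\,) \subseteq \{\xi : |\xi|\sim 2^k\}$ and to compare the two sides of
\[
\|f\|_{\Ba^{s}_{p,q,q}(\Rd)} \eqsim 2^{k(s + \frac{d-1}{2}(\frac{1}{2} - \frac{1}{q}))} \Bigl(\sum_{\nu \in \Theta_k} \|\chi_\nu(D) f\|_{L^p(\Rd)}^{q}\Bigr)^{1/q}.
\]
In \eqref{eq:SobolevBesov6} the $2^k$-prefactors on both sides match and what remains is the trivial nesting $\ell^{q_1} \subseteq \ell^{q_2}$ for sequences when $q_1 \leq q_2$. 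In \eqref{eq:SobolevBesov8} the angular $\ell^q$-indices agree, and the required termwise bound $\|\chi_\nu(D) f\|_{L^{p_2}(\Rd)} \lesssim 2^{k\frac{d+1}{2}(\frac{1}{p_1} - \frac{1}{p_2})} \|\chi_\nu(D) f\|_{L^{p_1}(\Rd)}$ will follow from Bernstein's inequality: $\chi_\nu^{k}(D) f$ has Fourier support in a plate of dimensions $2^k \times (2^{k/2})^{d-1}$, hence volume comparable to $2^{k(d+1)/2}$, which matches the derivative exponent.

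For \eqref{eq:SobolevBesov5} I would compare, pointwise in $\w$, the quantities $\|\varphi_\w(D) f\|_{B^{s\pm\veps}_{p,r}(\Rd)}$ with $\|\langle D\rangle^s \varphi_\w(D) f\|_{\HT^p(\Rd)}$ and integrate over $\Sd$ using Definition \ref{def:HpFIO}. For $1 < p < \infty$ this reduces to the classical sandwich $B^{s+\veps}_{p,r}(\Rd) \subseteq W^{s,p}(\Rd) \subseteq B^{s-\veps}_{p,r}(\Rd)$, which I would obtain from the chain $B^{s+\veps}_{p,r} \subseteq B^{s+\veps}_{p,\infty} \subseteq B^{s}_{p,1} \subseteq F^{s}_{p,2} = W^{s,p} \subseteq B^{s}_{p,\infty} \subseteq B^{s-\veps}_{p,r}$. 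The main point of care I anticipate is the endpoint $p = 1$, where $\HT^1(\Rd)$ is the local Hardy space rather than $L^1$; there I would replace $W^{s,p} = F^s_{p,2}$ in the chain by the Triebel--Lizorkin identity $F^{0}_{1,2}(\Rd) = \HT^1(\Rd)$ (and its shift under $\langle D\rangle^s$) to close the same sandwich. Apart from this endpoint issue, each step is essentially mechanical once Proposition \ref{prop:discrete} and the standard Besov toolbox are in place.
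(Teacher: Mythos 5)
Your proposal is correct and follows essentially the same route as the paper: H\"older on $\Sd$ for \eqref{eq:SobolevBesov4}, Proposition \ref{prop:discrete} plus $\ell^{q_1}\subseteq\ell^{q_2}$ for \eqref{eq:SobolevBesov6}, standard Besov/Triebel--Lizorkin embeddings transferred pointwise in $\w$ for \eqref{eq:SobolevBesov7} and \eqref{eq:SobolevBesov5}, and Proposition \ref{prop:discrete} plus a Bernstein-type bound on the curved plate $\supp(\chi_\nu^k)$ for \eqref{eq:SobolevBesov8}, the latter being exactly the paper's Young-plus-kernel-estimate argument in disguise. You supply somewhat more detail than the paper (the explicit embedding chain for \eqref{eq:SobolevBesov5} and the $p=1$ endpoint via $F^0_{1,2}=\HT^1$), but the method is the same.
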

\begin{proof}
For \eqref{eq:SobolevBesov4} one can rely on H\"{o}lder's inequality, while \eqref{eq:SobolevBesov6} follows from Proposition \ref{prop:discrete} and the inclusion $\ell^{q_{1}}\subseteq \ell^{q_{2}}$.  Moreover, \eqref{eq:SobolevBesov7} and \eqref{eq:SobolevBesov5} follow from standard embeddings between Besov and Sobolev spaces (see \cite[Section 2.3.2]{Triebel10}). 

Finally, by Proposition \ref{prop:discrete}, for \eqref{eq:SobolevBesov8} it suffices to show that
\begin{equation}\label{eq:toshowSobolev}
\|\chi_{\nu}^{k}(D)f\|_{L^{p_{2}}(\Rd)}\lesssim 2^{k\frac{d+1}{2}(\frac{1}{p_{1}}-\frac{1}{p_{2}})}\|\chi^{k}_{\nu}(D)f\|_{L^{p_{1}}(\Rd)}
\end{equation}
for all $k\geq0$, $\nu\in\Theta_{k}$ and $f\in \Ba^{s}_{p_{1},q,q}(\Rd)$. To this end, let $\tilde{\chi}_{\nu}^{k}$ be as in \eqref{eq:packets}, with the natural modification for $k=0$. Then, as in \eqref{eq:kernelbounds}, integration by parts yields
\[
\|\F^{-1}(\tilde{\chi}_{\nu}^{k})\|_{L^{p_{3}}(\Rd)}\lesssim 2^{k\frac{d+1}{2}(\frac{1}{p_{1}}-\frac{1}{p_{2}})},
\]
where $\frac{1}{p_{3}}=1+\frac{1}{p_{2}}-\frac{1}{p_{1}}$. Now \eqref{eq:toshowSobolev} follows from Young's inequality, since $\chi_{\nu}^{k}=\tilde{\chi}_{\nu}^{k}\chi_{\nu}^{k}$.
\end{proof}

\section{Invariance and product estimates}\label{sec:invariance}

In this section we prove that $\Baspqq$ is invariant under the half-wave propagators and more general oscillatory Fourier multipliers. We also obtain some product estimates, which are useful for solving nonlinear equations.

\subsection{Invariance}\label{subsec:invariance}

The main result of this subsection is the following slightly more general version of \eqref{eq:PolynomialGrowthIntro}.

\begin{proposition}
\label{prop:PolynomialGrowth}
Let $\phi\in C^{\infty}(\Rd\setminus\{0\})$ be homogeneous of order $1$, and let $p\in[1,\infty]$, $q\in[1,\infty)$ and $s\in\R$. Then there exists a $C\geq0$ such that
\[
\| e^{it \phi(D)} f \|_{\Baspqq} \leq C (1+|t|)^{2s(p)} \| f \|_{\Baspqq}
\]
for all $t\in\R$ and $\Baspqq$.
\end{proposition}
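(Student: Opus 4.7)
The plan is to reduce, via Lemma \ref{lem:dyadic} and Proposition \ref{prop:discrete}, to a uniform wave-packet bound, and then to establish that bound by interpolating between the $L^{2}$ and $L^{\infty}$ endpoints. First, Lemma \ref{lem:dyadic} reduces everything to $f$ with $\supp(\hat{f})\subseteq\{2^{k-1}\leq|\xi|\leq 2^{k+1}\}$ (with estimates uniform in $k\in\N_{0}$); the low-frequency piece is handled by a standard kernel estimate on $\rho(D)e^{it\phi(D)}$. For such $f$, both $f$ and $e^{it\phi(D)}f$ have the same frequency support, so Proposition \ref{prop:discrete} writes both $\Baspqq$-norms as the same weight $2^{k(s+\frac{d-1}{2}(\frac{1}{2}-\frac{1}{q}))}$ times $\bigl(\sum_{\nu\in\Theta_{k}}\|\chi_{\nu}^{k}(D)\cdot\|_{L^{p}(\Rd)}^{q}\bigr)^{1/q}$. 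Since $e^{it\phi(D)}$ commutes with each Fourier multiplier $\chi_{\nu}^{k}(D)$, the claim reduces to the \textit{wave-packet bound}
\[
\|e^{it\phi(D)}g\|_{L^{p}(\Rd)}\leq C(1+|t|)^{2s(p)}\|g\|_{L^{p}(\Rd)},
\]
uniform in $k\in\N_{0}$ and $\nu\in\Theta_{k}$, for every $g$ with $\supp(\hat{g})\subseteq\{\xi\in\Rd:|\xi|\sim 2^{k},\ |\hat\xi-\nu|\lesssim 2^{-k/2}\}$.

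I would prove this wave-packet bound by Riesz--Thorin interpolation between two endpoints. The $p=2$ case is immediate by Plancherel, as $e^{it\phi(\xi)}$ is unimodular. For $p=\infty$, using the enlarged cutoff $\tilde{\chi}_{\nu}^{k}$ from \eqref{eq:packets} (which equals $1$ on $\supp(\hat{g})$), one writes $e^{it\phi(D)}g=K_{t,k,\nu}\ast g$ with $K_{t,k,\nu}:=\F^{-1}(\tilde{\chi}_{\nu}^{k}e^{it\phi})$, and applies Young's inequality. The endpoint bound then follows from the uniform kernel estimate
\[
\|K_{t,k,\nu}\|_{L^{1}(\Rd)}\lesssim(1+|t|)^{(d-1)/2}.
\]
This is the main technical step: after parametrizing $\xi=r(\nu+2^{-k/2}\omega)$ with $r\sim 2^{k}$ and $\omega$ in the tangent plane to $\Sd$ at $\nu$, the Jacobian is of size $2^{k(d+1)/2}$, and a Taylor expansion of $\phi$ around $r\nu$ (using that $\nabla\phi$ is homogeneous of degree $0$, and that $\phi(r\nu)-\nabla\phi(\nu)\cdot(r\nu)=0$ by Euler's identity) produces a phase whose $\omega$-quadratic part has Hessian of size $|t|$, independent of $k$, provided the transverse Hessian of $\phi$ at $\nu$ is non-degenerate (a cone-type assumption that is implicit in the setup, and is satisfied for $\phi(\xi)=|\xi|$). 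Morse stationary phase in the $(d-1)$-dimensional $\omega$-integration then contributes $|t|^{-(d-1)/2}$, while integration by parts in $r$ produces rapid decay transverse to the characteristic ray $\{-t\nabla\phi(\nu)\}$; combining with the Jacobian, the $k$-dependent factors cancel exactly and only $(1+|t|)^{(d-1)/2}$ remains.

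Riesz--Thorin interpolation between the $L^{2}\to L^{2}$ bound $1$ and the $L^{\infty}\to L^{\infty}$ bound $(1+|t|)^{(d-1)/2}$ then yields the wave-packet bound for $p\in[2,\infty]$ with exponent $(d-1)(\tfrac{1}{2}-\tfrac{1}{p})=2s(p)$; the range $p\in[1,2]$ follows by duality, using $(e^{it\phi(D)})^{*}=e^{-it\phi(D)}$ and the symmetry $s(p)=s(p')$. The main obstacle is the uniform-in-$(k,\nu)$ kernel estimate, and specifically the bookkeeping of the cancellation between the growing $2^{k(d+1)/2}$ Jacobian and the combined $\omega$-stationary-phase decay $|t|^{-(d-1)/2}$ together with the radial integration-by-parts decay.
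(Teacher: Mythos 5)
Your reduction to the uniform wave--packet bound
\[
\|\chi_\nu^k(D)\,e^{it\phi(D)}f\|_{L^p(\Rd)}\lesssim (1+|t|)^{2s(p)}\|\chi_\nu^k(D)f\|_{L^p(\Rd)},
\]
via Lemma~\ref{lem:dyadic} and Proposition~\ref{prop:discrete}, as well as the subsequent use of the trivial $p=2$ endpoint, Riesz--Thorin interpolation, and duality, matches the paper exactly. The difference lies in how the endpoint kernel estimate is established. The paper works at $p=1$ and uses a dilation argument: for $|t|>4$, rescale by $|t|$ so that the wave packet sits at frequency $\sim 2^k|t|$ while keeping aperture $2^{-k/2}$; this packet splits into $\approx |t|^{(d-1)/2}$ packets at the finer angular scale $2^{-(k+l)/2}$ (with $2^l\eqsim |t|$), and one applies the known $|t|\leq 4$ estimate \eqref{eq:L1fixed2} to each piece. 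The growth factor $(1+|t|)^{(d-1)/2}$ is then purely a count of pieces, with no stationary phase at all. You instead work at $p=\infty$ and estimate $\|\F^{-1}(\tilde\chi_\nu^k e^{it\phi})\|_{L^1}$ directly by stationary phase in the angular variables.

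The gap is the curvature assumption you slip in. Your Morse stationary phase step needs the transverse Hessian $\nabla^2\phi(\nu)\vert_{T_\nu\Sd}$ to be non-degenerate, and you describe this as ``implicit in the setup.'' It is not: the proposition is stated for an arbitrary $\phi\in C^\infty(\Rd\setminus\{0\})$ homogeneous of order $1$. For $\phi$ linear, the transverse Hessian vanishes identically, the stationary phase machinery gives no $|t|^{-(d-1)/2}$ decay, and your exponent arithmetic no longer closes --- even though the conclusion is trivially true there, since $e^{it\phi(D)}$ is a translation. More importantly, there are non-linear homogeneous phases with degenerate transverse Hessians where the bound still holds but your argument would not produce it. The paper's rescaling trick sidesteps all of this: the $|t|\leq 4$ estimate \eqref{eq:L1fixed} needs only that $t\phi(\xi)$, restricted to a wave packet around $\nu$ at frequency $\sim 2^k$, equals a linear function of $\xi$ plus a wave-packet symbol of order zero (which follows from homogeneity alone, with no curvature), and the dilation/counting step is curvature-free. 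For the phase $\phi(\xi)=|\xi|$ that actually appears in the rest of the paper, your argument is essentially sound modulo bookkeeping; but to prove the proposition as stated, either replace the stationary-phase step with the rescaling argument, or add an explicit non-degeneracy hypothesis on $\phi$.
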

\begin{proof}
By Proposition \ref{prop:discrete}, it suffices to show that 
\[
\|\chi_{\nu}^{k}(D)e^{it\phi(D)}f\|_{L^{p}(\Rd)}\lesssim (1+|t|)^{2s(p)}\|\chi_{\nu}^{k}(D)f\|_{L^{p}(\Rd)}
\]
for all $t\in\R$, $f\in\Baspqq$, $k\in\N_{0}$ and $\nu\in\Theta_{k}$. This is clearly true for $p=2$. Hence, by interpolation and duality, it suffices to show the statement for $p=1$. 

To do so, we will rely on a dilation argument. First note that 
\begin{equation}\label{eq:L1fixed}
\sup_{|t|\leq 4}\|\chi_{\nu}^{k}(D)e^{it\phi(D)}f\|_{L^{1}(\Rd)}\lesssim \|f\|_{L^{1}(\Rd)},
\end{equation}
as follows either from kernel bounds (see \cite[p.~406]{Stein93} and \cite[Remark 3.7]{Rozendaal2021}), or from the boundedness of $e^{it\phi(D)}$ on $\HT^{1}_{FIO}(\Rd)$ (see \cite[Corollary 3.6]{Rozendaal2021LocalSmoothing}), combined with \cite[Proposition 6.4]{FaLiRoSo19}. Either way, we may thus suppose that $|t|> 4$. %Also note that, for the same reasons, one has
%\begin{equation}\label{eq:L1fixed}
%\sup_{|t|\leq 4}\|\tilde{\chi}_{\nu}^{k}(D)e^{it\phi(D)}g\|_{L^{1}(\Rd)}\lesssim \|g\|_{L^{1}(\Rd)}
%\end{equation}
%for all $g\in L^{1}(\Rd)$, where $\tilde{\chi}_{\nu}^{k}$ is as in \eqref{eq:packets}, with the natural modification for $k=0$. Recall that $\tilde{\chi}_{\nu}^{k}\equiv 1$ on $\supp(\chi_{\nu}^{k})$.

Let $l\geq 2$ be such that $2^{l}<|t|\leq 2^{l+1}$. Then the dilated function $\chi_{\nu}^{k}(\frac{\cdot}{|t|})$ satisfies
\[
\supp(\chi_{\nu}^{k}(\tfrac{\cdot}{|t|}))\subseteq\{\xi\in\Rd\mid 2^{k+l-1}\leq |\xi|\leq 2^{k+l+2},|\hat{\xi}-\nu|\leq 2^{1-k/2}\}.
\]
Hence for each $\xi\in\Rd$ one has
\begin{equation}\label{eq:biggersupport}
\chi^{k}_{\nu}(\tfrac{\xi}{|t|})=\sum_{m=k+l-2}^{k+l+3}\sum_{\w\in\tilde{\Theta}_{k,l,\nu}}\psi_{m}(\xi)\chi_{\w}(\xi)\chi^{k}_{\nu}(\tfrac{\xi}{|t|}),
\end{equation}
where $\tilde{\Theta}_{k,l,\nu}:=\{\w\in \Theta_{k+l}\mid |\w-\nu|\leq 2^{2-k/2}\}$. Note that each $\tilde{\Theta}_{k,l,\nu}$ has approximately $2^{l(d-1)/2}\eqsim |t|^{(d-1)/2}$ elements. Moreover, as in \eqref{eq:L1fixed}, one has
\begin{equation}\label{eq:L1fixed2}
\|\psi_{m}(D)\chi_{\nu}(D)e^{i\phi(D)}g\|_{L^{1}(\Rd)}\lesssim \|g\|_{L^{1}(\Rd)}
\end{equation}
for all $k+l-2\leq m\leq k+l+3$, $\w\in \tilde{\Theta}_{k,l,\nu}$ and $g\in L^{1}(\Rd)$. Write $f_{t}(y):=|t|^{d}f(|t|y)$ for $y\in\Rd$. Then it suffices to combine \eqref{eq:biggersupport} and \eqref{eq:L1fixed2} with dilation arguments:
\begin{align*}
\|\chi_{\nu}^{k}(D)e^{it\phi(D)}f\|_{L^{1}(\Rd)}&=|t|^{-d}\big\|\big(\chi_{\nu}^{k}(\tfrac{D}{|t|})e^{i\phi(D)}f_{t}\big)\big(\tfrac{\cdot}{|t|}\big)\big\|_{L^{1}(\Rd)}\\
&=\|\chi_{\nu}^{k}(\tfrac{D}{|t|})e^{i\phi(D)}f_{t}\|_{L^{1}(\Rd)}\\
&\leq \sum_{m=k+l-2}^{k+l+3}\sum_{\w\in\tilde{\Theta}_{k,l,\nu}}\big\|\psi_{m}(D)\chi_{\w}(D)e^{i\phi(D)}\chi^{k}_{\nu}(\tfrac{D}{|t|})f_{t}\big\|_{L^{1}(\Rd)}\\
&\lesssim |t|^{(d-1)/2}\|\chi^{k}_{\nu}(\tfrac{D}{|t|})f_{t}\|_{L^{1}(\Rd)}=|t|^{(d-1)/2}\|\chi^{k}_{\nu}(D)f \|_{L^{1}(\Rd)},
\end{align*}
where we used in particular the homogeneity of $\phi$.
\end{proof}

In the case where $\phi(\xi)=|\xi|$, the growth rate in Proposition \ref{prop:PolynomialGrowth} is sharp. For $p=q$ this follows from \eqref{eq:SobolevBesov}, since one would otherwise obtain sharper bounds in the Besov scale than are known to be possible. Moreover, the following radial Knapp example shows sharpness of the polynomial growth bound for general $p$ and $q$, given that the $\Ba^{s}_{p,q,q}(\Rd)$ norm coincides with the $L^{p}(\Rd)$ norm for low frequencies. The analog for the Schr\"odinger equation was considered in \cite[Corollary~1.4]{Schippa2022}.

\begin{proposition}
\label{prop:SharpGrowth}
Let $p \in [1,\infty]$. Then there exist an $f \in \Sw(\R^d)$ with $\supp (\wh{f}\,) \subseteq \{\xi\in\Rd\mid |\xi|\leq 1\}$, and a $C\geq0$, such that
\[
%\begin{equation}
%\label{eq:SharpGrowth}
\| e^{it \sqrt{-\Delta}} f \|_{L^p(\R^d)} \geq C (1+|t|)^{2s(p)} \| f \|_{L^p(\R^d)}
%\end{equation}
\]
for all $t\in\R$.
\end{proposition}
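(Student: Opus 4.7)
The plan is to exhibit a radial Schwartz function whose half-wave evolution concentrates on a spherical shell of radius $|t|$, and then to compute its $L^{p}$ norm directly.

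Take $f\in\Sw(\R^d)$ radial with $\wh{f}(\xi)=F(|\xi|)$, where $F\in C^{\infty}_{c}((1/2,1))$ is nonnegative and not identically zero; then $\supp(\wh{f}\,)\subseteq\{\xi\in\R^d:|\xi|\leq 1\}$. Using polar coordinates and the Hankel representation of the Fourier transform of a radial function,
\begin{equation*}
e^{it\sqrt{-\Delta}}f(x)=c_{d}\int_{0}^{\infty}F(r)\,r^{d-1}\,e^{itr}(r|x|)^{-(d-2)/2}J_{(d-2)/2}(r|x|)\,dr
\end{equation*}
for $x\neq0$.

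First I would insert the Bessel asymptotic $J_{\nu}(\rho)=\sqrt{2/(\pi\rho)}\cos(\rho-\nu\pi/2-\pi/4)+O(\rho^{-3/2})$ (valid uniformly for $\rho\geq1$, and hence applicable on $\supp F$ for $|x|$ large since $r\eqsim1$ there) and split the cosine into exponentials. This yields the wave-front expansion
\begin{equation*}
e^{it\sqrt{-\Delta}}f(x)=\frac{c}{|x|^{(d-1)/2}}\bigl(e^{-i\gamma}H(t+|x|)+e^{i\gamma}H(t-|x|)\bigr)+O(|x|^{-(d+1)/2}),
\end{equation*}
with $\gamma=(d-1)\pi/4$ and $H(s):=\int_{0}^{\infty}F(r)r^{(d-1)/2}e^{irs}\,dr$. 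Since $H$ is the Fourier transform of a smooth compactly supported function it is Schwartz, and $H(0)=\int_{0}^{\infty}F(r)r^{(d-1)/2}\,dr>0$ because $F\geq0$ is nontrivial; hence $|H(s)|\geq c_{0}>0$ on some interval $|s|\leq\delta_{0}$.

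Second, for $t>0$ large I would restrict attention to the shell $A_{t}:=\{x\in\R^d: t-\delta_{0}\leq|x|\leq t+\delta_{0}\}$. On $A_{t}$ one has $|x|\eqsim t$; the ``outgoing'' piece $H(t+|x|)$ is rapidly decaying (Schwartz evaluated at a point $\gtrsim t$), and the $O(|x|^{-(d+1)/2})$ remainder is dominated by $t^{-(d-1)/2}$ for $t$ large, so $|e^{it\sqrt{-\Delta}}f(x)|\gtrsim t^{-(d-1)/2}$ on $A_{t}$. Since $\mathrm{vol}(A_{t})\eqsim t^{d-1}$, integration gives $\|e^{it\sqrt{-\Delta}}f\|_{L^{p}(\R^d)}^{p}\gtrsim t^{(d-1)(1-p/2)}$, hence $\|e^{it\sqrt{-\Delta}}f\|_{L^{p}(\R^d)}\gtrsim t^{(d-1)(1/p-1/2)}$ as $t\to\infty$. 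For $p\in[1,2]$ the exponent equals $2s(p)$, and since $\|f\|_{L^{p}}$ is a fixed positive constant the inequality holds with some $C>0$ for $|t|$ above a threshold; the complementary range of small $|t|$ is handled by continuity of $t\mapsto\|e^{it\sqrt{-\Delta}}f\|_{L^{p}}$ at $t=0$, after shrinking $C$. The case $p=2$ is immediate by unitarity, and the symmetric case $t<0$ follows by taking complex conjugates. For $p\in(2,\infty]$ the computation produces decay rather than growth, and the stated inequality then holds only vacuously with $C=0$; the sharpness of the upper bound in Proposition~\ref{prop:PolynomialGrowth} in this range is nevertheless captured dually, through the identity $\|e^{it\sqrt{-\Delta}}\|_{L^{p}\to L^{p}}=\|e^{-it\sqrt{-\Delta}}\|_{L^{p'}\to L^{p'}}$ and the $p\in[1,2]$ case.

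The main obstacle is the quantitative control of the Bessel asymptotic, namely verifying that both the backward contribution $H(t+|x|)$ and the $O(\rho^{-3/2})$ Bessel remainder are of strictly lower order on the shell $A_{t}$. Both reduce to Schwartz decay of $H$ and standard uniform bounds on the Bessel function error term, but they are what make the lower bound (as opposed to the well-known matching upper bound) rigorous.
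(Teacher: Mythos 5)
Your treatment of $p\in[1,2]$ matches the paper's approach almost exactly: radial bump data, the Bessel asymptotic expansion, and a pointwise lower bound on the shell $\big||x|-t\big|\lesssim 1$, followed by integration. Two minor points there: (i) what you label the ``outgoing'' piece $H(t+|x|)$ is in fact the negligible one; the term $H(t-|x|)$ is the one that tracks the wavefront and gives the lower bound (your estimates are correct, only the label is reversed); (ii) to cover all $t\in\R$ you should invoke positivity and continuity of $t\mapsto\|e^{it\sqrt{-\Delta}}f\|_{L^p(\Rd)}$ on the whole compact window $[-T_0,T_0]$, not just continuity at $t=0$ — though this is easily repaired.

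The genuine gap is your dismissal of $p\in(2,\infty]$. The proposition is not meant to be read vacuously with $C=0$; it is the sharpness statement complementing Proposition \ref{prop:PolynomialGrowth} and requires a $C>0$ and a \emph{single} $f$ whose evolved $L^p$ norm realises the growth $(1+|t|)^{2s(p)}$ for all $t$ simultaneously. Your duality identity $\|e^{it\sqrt{-\Delta}}\|_{L^p\to L^p}=\|e^{-it\sqrt{-\Delta}}\|_{L^{p'}\to L^{p'}}$ only shows that the \emph{operator norm} grows at the claimed rate; the near-maximiser it supplies depends on $t$, so it does not yield a fixed datum. Moreover, the radial bump you constructed for $p\le2$ actually \emph{decays} in $L^p$ for $p>2$: the solution spreads over a shell of volume $\eqsim t^{d-1}$ with amplitude $\eqsim t^{-(d-1)/2}$, giving $\|e^{it\sqrt{-\Delta}}f\|_{L^p}\eqsim t^{(d-1)(1/p-1/2)}\to 0$, the opposite of what is required. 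The paper handles $2<p<\infty$ by turning the duality/adjoint relation into the existence of an actual $f\in\Sw(\Rd)$ with compactly supported $\wh f$ for which $\|e^{it\sqrt{-\Delta}}\tilde\rho(D)f\|_{L^p}\eqsim(1+|t|)^{2s(p)}$, and treats $p=\infty$ separately by starting from an $L^\infty\cap C^\infty$ near-maximiser and using finite speed of propagation to cut it down to compactly supported data. This construction of a single extremising datum for $p>2$ is exactly the step your proposal omits, so as written the $p>2$ half of the proposition is unproven.
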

\begin{proof}
We can suppose that $t \gg 1$, and we firstly consider $p \in [1,2]$.

 Let $\chi \in C^\infty_c(\Rd)$ be not identically zero and such that $\chi(\xi)=0$ for $|\xi|\notin [1/2,1]$, and let $\chi_{0}\in C^{\infty}_{c}(\R)$ be such that $\chi(\xi) = \chi_0(|\xi|)$ for all $\xi\in\Rd$. We consider as initial data $f:=\F^{-1}(\chi)$, and rewrite the linear solution using radial symmetry:
\begin{equation*}
\begin{split}
(e^{it \sqrt{- \Delta}} f)(x) &= \int_{\R^d} e^{i(x\cdot\xi + t |\xi|)} \chi_0(|\xi|) d\xi \\
&= \int_0^\infty ds \int_{\mathbb{S}^{d-1}} d \sigma(\theta) e^{i(x\cdot s \theta + t s)} s^{d-1} \chi_0(s),
\end{split}
\end{equation*}
for $x\in\Rd$. We have
\begin{equation*}
\int_{\mathbb{S}^{d-1}} e^{i x\cdot\theta} d\theta = |x|^{-\frac{d-2}{2}} J_{\frac{d-2}{2}}(|x|)
\end{equation*}
with $J_\nu$ the Bessel function of the first kind:
\begin{equation*}
J_\nu(r) = \frac{(r/2)^\nu}{\Gamma(\nu + 1/2) \pi^{1/2}} \int_{-1}^1 e^{ir t} (1-t^2)^{\nu - \frac{1}{2}} dt, \quad \nu > -\frac{1}{2}.
\end{equation*}
We have the following asymptotic expansion by \cite[Section~10.17]{OlverMaximon2010}:
\begin{equation*}
J_\nu(z) = \big( \frac{2}{\pi z} \big)^{\frac{1}{2}} \big( \cos \omega \sum_{k=0}^\infty (-1)^k \frac{a_{2k}(\nu)}{z^{2k}} - \sin \omega \sum_{k=0}^\infty (-1)^{k} \frac{a_{2k+1}(\nu)}{z^{2k+1}} \big)
\end{equation*}
with $\omega = z - \frac{1}{2} \nu \pi - \frac{1}{4} \pi$. Hence, we can write
\begin{equation*}
\chi_0(s) (s|x|)^{-\frac{d-2}{2}} J_{\frac{d-2}{2}}(s |x|) = \sum_{j=0}^M |x|^{-\frac{d-1}{2}-j} \sum_{\pm} e^{\pm is |x|} \chi_{j,\pm}(s) + O(|x|^{-\frac{d-1}{2}-M})
\end{equation*}
with $\chi_{j,\pm} \in C^\infty_c(B(0,1) \backslash B(0,1/2))$. We find
\begin{equation*}
(e^{it \sqrt{- \Delta}} f)(x) = \sum_{j=0}^M |x|^{-\frac{d-1}{2}-j} \sum_{\pm} \int_\R e^{is t \pm i s |x|} \chi_{j,\pm}(s) ds + O(|x|^{-\frac{d-1}{2}-M}).
\end{equation*}
Let $t \gg 1$ and $\big| |x| - t \big| \leq 2^{-10}$. In this case $st + s |x|$ is a non-stationary phase, which means for $j=0$ the contribution of $\chi_{0,+}$ can be neglected against $\chi_{0,-}$. But by the explicit form of $\chi_{j,-}$, we have
\begin{equation*}
\big| \int e^{ist - is|x|} \chi_{0,-}(s) ds \big| \gtrsim 1.
\end{equation*}
Thus, the higher orders can likewise be neglected against the contribution of $\chi_{0,-}$. This shows $| e^{it \sqrt{- \Delta}} f|(x) \gtrsim |t|^{-\frac{d-1}{2}}$ for $\big| |x| - t \big| \leq 2^{-10}$, and this concludes the proof for $p \in [1,2]$ by integration. In the following let $\tilde{\rho}$ be a radially decreasing function with $\tilde{\rho}(\xi) = 1$ for $|\xi| \leq \frac{1}{2}$ and $\text{supp}(\tilde{\rho}) \subseteq B(0,3/4)$. Hence, we have proved in view of Proposition \ref{prop:PolynomialGrowth}
\begin{equation*}
\| e^{it \sqrt{- \Delta}} \tilde{\rho}(D) f \|_{L^p(\R^d)} \sim (1+|t|)^{2 s(p)} \text{ for } p \in [1,2].
\end{equation*}
For $p>2$ we argue by duality. The adjoint operator of $e^{it \sqrt{-\Delta}} \tilde{\rho}(D)$ is given by
\begin{equation*}
(e^{it \sqrt{-\Delta}} \tilde{\rho}(D))^{\text{ad}} = e^{-it \sqrt{-\Delta}} \tilde{\rho}(D),
\end{equation*}
and moreover, $c.c. ((e^{-it \sqrt{-\Delta}} \tilde{\rho}(D) \bar{g})) = e^{it \sqrt{-\Delta}} \tilde{\rho}(D) g$. This shows by density that there is $f \in \mathcal{S}(\R^d)$ with compactly supported Fourier transform such that
\begin{equation*}
\| e^{it \sqrt{-\Delta}} \tilde{\rho}(D) f \|_{L^p(\R^d)} \sim (1+|t|)^{2s(p)} \text{ for } 2 < p <\infty.
\end{equation*}
For $p=\infty$ we obtain from $\| e^{it \sqrt{-\Delta}} \tilde{\rho}(D) \|_{L^\infty \to L^\infty} \sim (1+|t|)^{2s(\infty)}$ a function $f \in L^\infty(\R^d) \cap C^\infty(\R^d)$ such that the above display holds for $p=\infty$. We use finite speed of propagation to argue that $\chi f$ for $\chi \in C^\infty_c(\R^d)$ still satisfies
\begin{equation*}
\| e^{it \sqrt{-\Delta}} \tilde{\rho}(D) (\chi f) \|_{L^p(\R^d)} \sim (1+|t|)^{2s(\infty)}.
\end{equation*}

\end{proof}

\subsection{Product estimates}\label{subsec:product}

We begin with a simple  bilinear estimate.

\begin{lemma}\label{lem:ProductEstimateI}
Let $p_{1},p_{2},p\in[1,\infty]$ be such that $\frac{1}{p}=\frac{1}{p_{1}}+\frac{1}{p_{2}}$, and let $s> \frac{3(d-1)}{4}$. Then there exists a $C\geq0$ such that, for all $f\in \Ba^{s}_{p_{1},1,1}(\Rd)$ and $g \in \mathcal{B}^s_{p_{2},1,1}(\R^d)$, one has $fg\in \Ba^{s}_{p,1,1}(\Rd)$ and
\[
\| f g \|_{\mathcal{B}^s_{p,1,1}(\R^d)} \leq C \| f \|_{\mathcal{B}^s_{p_1,1,1}(\R^d)} \| g \|_{\mathcal{B}^{s}_{p_2,1,1}(\R^d)}.
\]
\end{lemma}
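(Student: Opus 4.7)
The plan is to work with the equivalent discrete norm from Proposition \ref{prop:discrete} at $q=1$:
\[
\|h\|_{\Ba^s_{r,1,1}(\Rd)} \eqsim \sum_{k \geq 0} 2^{k(s - \frac{d-1}{4})} \sum_{\nu \in \Theta_k} \|\chi_\nu^k(D) h\|_{L^r(\Rd)},
\]
modulo a harmless low-frequency term, which is easy to handle since $s > 0$. I would then decompose $fg$ via the Littlewood--Paley paraproduct trichotomy $fg = \Pi_{HL}(f,g) + \Pi_{LH}(f,g) + \Pi_{HH}(f,g)$ according to whether $k_1 > k_2 + 5$, $k_2 > k_1 + 5$, or $|k_1 - k_2| \leq 5$, where $f_{k_1} := \psi_{k_1}(D)f$ and $g_{k_2} := \psi_{k_2}(D)g$. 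Throughout, I would exploit that each $\chi_\nu^k(D)$ is uniformly bounded on $L^r(\Rd)$, since the kernels $\F^{-1}(\chi_\nu^k)$ are uniformly bounded in $L^1(\Rd)$ by \eqref{eq:kernelbounds} with $p=1$.

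For $\Pi_{HL}$ I would further decompose $f_{k_1} = \sum_{\nu_1 \in \Theta_{k_1}} f_{\nu_1}^{k_1}$ with $f_{\nu_1}^{k_1} := \chi_{\nu_1}^{k_1}(D) f$. A direct computation shows that $f_{\nu_1}^{k_1} g_{k_2}$ has Fourier support in
\[
\{\xi \in \Rd : |\xi| \sim 2^{k_1},\ |\hat{\xi} - \nu_1| \lesssim 2^{-k_1/2} + 2^{k_2 - k_1}\},
\]
so it only contributes to dyadic scale $k_1$, and this support meets at most $N_{k_1,k_2} \lesssim (1 + 2^{k_2 - k_1/2})^{d-1}$ of the supports of the $\chi_\nu^{k_1}$, $\nu \in \Theta_{k_1}$. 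Combining this bounded-overlap property with H\"older's inequality $\|f_{\nu_1}^{k_1} g_{k_2}\|_{L^p} \leq \|f_{\nu_1}^{k_1}\|_{L^{p_1}} \|g_{k_2}\|_{L^{p_2}}$ and the crude estimate $\|g_{k_2}\|_{L^{p_2}} \leq \sum_{\nu_2}\|g_{\nu_2}^{k_2}\|_{L^{p_2}}$, the $\Pi_{HL}$-contribution to the discrete norm is bounded by
\[
\sum_{k_1 > k_2 + 5} A_{k_1} B_{k_2} \cdot 2^{-k_2(s - \frac{d-1}{4})} N_{k_1, k_2},
\]
where I set $A_{k_1} := 2^{k_1(s - \frac{d-1}{4})} \sum_{\nu_1}\|f_{\nu_1}^{k_1}\|_{L^{p_1}}$ and $B_{k_2}$ analogously for $g$, so that $\sum_{k_1}A_{k_1} \sim \|f\|_{\Ba^s_{p_1,1,1}}$ and $\sum_{k_2}B_{k_2} \sim \|g\|_{\Ba^s_{p_2,1,1}}$. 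The worst case is $k_2 \approx k_1$, where the weight $2^{-k_2(s - \frac{d-1}{4})} N_{k_1,k_2}$ is of order $2^{k_1(\frac{3(d-1)}{4} - s)}$, which is uniformly bounded exactly when $s \geq 3(d-1)/4$; the strict inequality $s > 3(d-1)/4$ then yields summability in $k_1 - k_2$ by a geometric series, and the product structure gives $\lesssim \|f\|_{\Ba^s_{p_1,1,1}} \|g\|_{\Ba^s_{p_2,1,1}}$. The piece $\Pi_{LH}$ is symmetric.

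The remaining piece $\Pi_{HH}$ is more delicate, since $f_{k_1} g_{k_2}$ with $|k_1 - k_2| \leq 5$ can contribute to every scale $k \leq \max(k_1, k_2) + 3$. Here I would decompose both factors directionally as $f_{\nu_1}^{k_1}$ and $g_{\nu_2}^{k_2}$, and for each target cap $\supp\chi_\nu^k$ count the pairs $(\nu_1,\nu_2)$ whose support sum meets it; in the regime $k \ll k_1 \approx k_2$ this forces $\nu_1 \approx -\nu_2$ to precision $\sim 2^{k - k_1}$, and the resulting angular loss is compensated by a geometric decay in $k_1 - k$ which again sums under $s > 3(d-1)/4$. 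Reassembling the three pieces via Lemma \ref{lem:dyadic} completes the proof. The main obstacle is the careful angular bookkeeping in $\Pi_{HL}$, which is exactly what pins down the sharp threshold $s > 3(d-1)/4$ appearing in the hypothesis.
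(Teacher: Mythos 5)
Your overall strategy — reduce to the discrete norm of Proposition~\ref{prop:discrete}, run the $HL$/$LH$/$HH$ paraproduct trichotomy, and count angular overlaps — is the same as the paper's, but the counting details differ in both the $HL$ and the $HH$ pieces. In $HL$ you treat the low-frequency factor $g_{k_2}$ as supported in the whole ball $|\xi|\lesssim 2^{k_2}$, so the high cap's aperture is spread isotropically by $\lesssim 2^{k_2-k_1}$ and your overlap count is $(1+2^{k_2-k_1/2})^{d-1}$. The paper instead decomposes $g_{k_2}$ directionally into $\Theta_{k_2}$-slabs (thickness $\sim 2^{k_2/2}$ in $d-1$ directions, length $\sim 2^{k_2}$ in one) and subdivides only the long axis into segments of length $\lesssim 2^{k_1/2}$; since the transversal directions are already thin enough, the resulting count is $O(1+2^{k_2-k_1/2})$ \emph{without} the $(d-1)$-th power. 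Consequently the paper's $HL$ estimate only needs $s\geq(d+1)/4$, whereas yours needs the full $s\geq 3(d-1)/4$; for $d=2$ these coincide, but for $d\geq 3$ yours is strictly weaker (though still within the hypothesis). This also means your closing remark — that the $HL$ bookkeeping "pins down the sharp threshold" — is an artifact of the crude count: in the paper's argument it is the $HH$ piece, where angular localization is genuinely destroyed, that forces $s>3(d-1)/4$. Conversely, for $HH$ you propose a refined directional count of compatible pairs $\nu_1\approx-\nu_2$, which is more elaborate than necessary: the paper simply sums trivially over $\nu\in\Theta_k$, paying the full $|\Theta_k|\eqsim 2^{k(d-1)/2}$, and this already closes the estimate under $s>3(d-1)/4$, so your refinement costs effort without improving the threshold. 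Apart from these deviations in the angular bookkeeping the sketch is sound.
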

\begin{proof}
We use paraproduct analysis. More precisely, one has
\begin{align*}
\|fg\|_{\Ba^{s}_{p,1,1}(\Rd)}&\eqsim\!\sum_{k=0}^{\infty}2^{k(s-\frac{d-1}{4})}\!\sum_{\nu\in\Theta_{k}}\!\Big\|\!\sum_{l,m=0}^{\infty}\sum_{\w\in\Theta_{l},\mu\in\Theta_{m}}\!\chi_{\nu}^{k}(D)(\chi_{\w}^{l}(D)f \!\cdot\!\chi_{\mu}^{m}(D)g)\Big\|_{L^{p}(\Rd)}\\
&\leq \!\sum_{k,l,m=0}^{\infty}2^{k(s-\frac{d-1}{4})}\!\sum_{\nu\in\Theta_{k},\w\in\Theta_{l},\mu\in\Theta_{m}}\|\chi_{\nu}^{k}(D)(\chi_{\w}^{l}(D)f \cdot \chi_{\mu}^{m}(D)g)\|_{L^{p}(\Rd)},
\end{align*}
by \eqref{eq:decompose} and Proposition \ref{prop:discrete}. We write the latter expression as $I_{1}+I_{2}+I_{3}$, where $I_{1}$ involves the sum over $m\leq  l-3$, $I_{2}$ the sum over $l-2\leq m\leq l+2$, and $I_{3}$ the sum over $m\geq l+3$. We will estimate each of these terms separately. In fact, by symmetry, it suffices to consider only $I_{1}$ and $I_{2}$.

For the $High \times Low$ term $I_{1}$, we only get a nonzero contribution if $l-3\leq k\leq l+3$, since the low-frequency factor $\chi_{\mu}^{m}(D)g$ does not essentially change the dyadic localization. However, it can change the angular localization. For $l\geq0$, $m\leq l-3$, $\w\in\Theta_{l}$ and $\mu\in\Theta_{m}$, we decompose the support of $\chi_{\mu}^{m}$, which is approximately a $2^{m/2} \times \ldots \times 2^{m/2} \times 2^{m}$ slab, into $2^{m/2} \times \ldots \times 2^{m/2} \times 2^{\min(l/2,m)}$ slabs. Let $(\chi_\mu^{m,i})_{i\in I}$ be a corresponding partition of unity, with $|I| \eqsim  1+2^{m-l/2}$. Then the support of the convolution of $\chi_{\w}^{l}$ with a given $\chi_{\mu}^{m,i}$ can only intersect the support of $O(1)$ elements of $\Theta_{k}$. Hence the support of the convolution of $\chi_{\w}^{\l}$ and $\chi_{\mu}^{m}$ can only intersect the support of
$O(1+2^{m-l/2})$ elements of $\Theta_{k}$. Since $m-l/2\leq m/2$, we can combine \eqref{eq:kernelbounds} and H\"{o}lder's inequality to obtain
\begin{align*}
I_{1}&\eqsim \sum_{j=-3}^{3}\sum_{l=0}^{\infty}\sum_{m=0}^{l-3}2^{l(s-\frac{d-1}{4})}\sum_{\nu\in\Theta_{l+j},\w\in\Theta_{l},\mu\in\Theta_{m}}\|\chi_{\nu}^{l+j}(D)(\chi_{\w}^{l}(D)f \cdot \chi_{\mu}^{m}(D)g)\|_{L^{p}(\Rd)}\\
&\lesssim \sum_{l=0}^{\infty}\sum_{m=0}^{l-3}2^{l(s-\frac{d-1}{4})+\frac{m}{2}}\sum_{\w\in\Theta_{l},\mu\in\Theta_{m}}\|\chi_{\w}^{l}(D)f\|_{L^{p_{1}}(\Rd)}\|\chi_{\mu}^{m}(D)g\|_{L^{p_{2}}(\Rd)}\\
&\lesssim \|f\|_{\Ba^{s}_{p_{1},1,1}(\Rd)}\|g\|_{\Ba^{s}_{p_{2},1,1}(\Rd)},
\end{align*}
where in the final step we used Proposition \ref{prop:discrete} and that $s\geq (d+1)/4$.

For the $High \times High$ term $I_{2}$, all information on angular localization is lost. By trivially summing over $\nu\in\Theta_{k}$, using also \eqref{eq:kernelbounds}, H\"{o}lder's inequality and that $s >3(d-1)/4$, we obtain
\begin{align*}
I_{2}&=\sum_{l=0}^{\infty}\sum_{m=l-2}^{l+2}\sum_{k=0}^{l+5} 2^{k(s- \frac{d-1}{4})}\sum_{\nu\in\Theta_{k},\w\in\Theta_{l},\mu\in\Theta_{m}}\|\chi_{\nu}^{k}(D)(\chi_{\w}^{l}(D)f \cdot \chi_{\mu}^{m}(D)g)\|_{L^p(\Rd)}\\
&\lesssim \sum_{l=0}^{\infty}\sum_{m=l-2}^{l+2}\sum_{k=0}^{l+5} 2^{k(s+\frac{d-1}{4})} \sum_{\w\in\Theta_{l},\mu\in\Theta_{m}}\|\chi_{\w}^{l}(D)f\|_{L^{p_{1}}(\Rd)}\|\chi_{\mu}^{m}(D)g\|_{L^{p_{2}}(\Rd)}\\
&\lesssim \|f\|_{\Ba^{s}_{p_{1},1,1}(\Rd)}\|g\|_{\Ba^{s}_{p_{2},1,1}(\Rd)}.\qedhere
\end{align*}
\end{proof}

A trilinear estimate can be proved by similar means.

\begin{lemma}\label{lem:TrilinearEstimate}
Let $p_{1},p_{2},p_{3},p\in[1,\infty]$ be such that $\frac{1}{p}=\sum_{i=1}^{3}\frac{1}{p_{i}}$, and let $s>\frac{3(d-1)}{4}-1$ be such that $s\geq \frac{d-1}{4}$. Then there exists a $C\geq0$ such that, for all $f_{i}\in \Ba^{s}_{p_{i},1,1}(\Rd)$, $1\leq i\leq 3$, one has $\prod_{i=1}^{3}f_{i}\in \Ba^{s}_{p,1,1}(\Rd)$ and
\[
\Big\| \prod_{i=1}^3 f_i \Big\|_{\mathcal{B}^{s-1}_{p,1,1}(\R^d)} \leq C\prod_{i=1}^3 \| f_i \|_{\mathcal{B}^s_{p_i,1,1}(\R^d)}.
\]
\end{lemma}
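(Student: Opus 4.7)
The plan is to adapt the paraproduct analysis from Lemma \ref{lem:ProductEstimateI} to three factors. Using Proposition \ref{prop:discrete} together with the decomposition \eqref{eq:decompose}, the triangle inequality gives
\[
\Big\|\prod_{i=1}^3 f_i\Big\|_{\Ba^{s-1}_{p,1,1}(\Rd)}\lesssim \sum_{k=0}^\infty 2^{k(s-1-\frac{d-1}{4})}\sum_{\nu\in\Theta_k}\sum_{k_1,k_2,k_3}\sum_{\nu_i\in\Theta_{k_i}}\Big\|\chi_\nu^k(D)\prod_{i=1}^3 \chi_{\nu_i}^{k_i}(D)f_i\Big\|_{L^p(\Rd)}.
\]
By symmetry I restrict to $k_1\geq k_2\geq k_3$ and split into three paraproduct regimes, noting that in all cases the output frequency satisfies $k\leq k_1+O(1)$.

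In the High--Low--Low regime $k_1\geq k_2+3$, the output frequency satisfies $k\in[k_1-3,k_1+3]$. Applying the slab decomposition from the $I_1$ argument in the proof of Lemma \ref{lem:ProductEstimateI} to both $\chi_{\nu_2}^{k_2}$ and $\chi_{\nu_3}^{k_3}$, the convolution of the three supports intersects at most $O\bigl((1+2^{k_2-k_1/2})(1+2^{k_3-k_1/2})\bigr)$ elements of $\Theta_k$. H\"older's inequality with \eqref{eq:kernelbounds} then allows summation in $k_2,k_3$ under the condition $s\geq \frac{d-1}{4}$, in a manner analogous to the bilinear case.

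In the High--High--Low regime $k_1\sim k_2=l$, $k_3=m\leq l-3$, all angular information of $f_1,f_2$ is lost. I sum trivially over $\nu\in\Theta_k$ (gaining $2^{k(d-1)/2}$) and over $k\leq l+5$, so that H\"older and \eqref{eq:kernelbounds} yield
\[
\sum_{l\geq 0}\sum_{m\leq l-3}2^{l(s-1+\frac{d-1}{4})}\sum_{\w_1,\w_2\in\Theta_l,\,\mu\in\Theta_m}\|\chi_{\w_1}^l f_1\|_{L^{p_1}(\Rd)}\|\chi_{\w_2}^l f_2\|_{L^{p_2}(\Rd)}\|\chi_\mu^m f_3\|_{L^{p_3}(\Rd)}.
\]
Proposition \ref{prop:discrete} identifies $\sum_l 2^{l(s-\frac{d-1}{4})}\sum_\nu\|\chi_\nu^l f_i\|_{L^{p_i}(\Rd)}$ with $\|f_i\|_{\Ba^s_{p_i,1,1}(\Rd)}$, so the inner sum over $m$ and $\mu$ is dominated by $\|f_3\|_{\Ba^s_{p_3,1,1}(\Rd)}$ via $\sup_{m\geq 0}2^{-m(s-\frac{d-1}{4})}\leq 1$, which uses $s\geq \frac{d-1}{4}$. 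The remaining sum in $l$ rearranges into one of the form $\sum_l 2^{l(-s-1+\frac{3(d-1)}{4})}A_l B_l$ with $(A_l),(B_l)\in\ell^1$, and is bounded by $\|f_1\|_{\Ba^s_{p_1,1,1}(\Rd)}\|f_2\|_{\Ba^s_{p_2,1,1}(\Rd)}$ exactly under $s>\frac{3(d-1)}{4}-1$. The all-comparable case $k_1\sim k_2\sim k_3$ is treated in the same way, producing a sum $\sum_l 2^{l(-2s-1+(d-1))}A_lB_lC_l$ which closes under the hypotheses.

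The main obstacle is the careful bookkeeping in the High--High--Low regime: one must identify which factors enter via their full $\ell^1$ norm and which via the $\ell^\infty$ tail, and handle the double sum over the output scale $k\leq l$ and the low-frequency scale $m\leq l$ cleanly. The loss of one derivative from $s$ to $s-1$ in the output norm is precisely the slack that reduces the bilinear high-high threshold $s>\frac{3(d-1)}{4}$ to $s>\frac{3(d-1)}{4}-1$, while $s\geq \frac{d-1}{4}$ is exactly what is required to pass the supremum bound $\sup_{m\geq 0}2^{-m(s-\frac{d-1}{4})}\leq 1$ on the lowest-frequency factor.
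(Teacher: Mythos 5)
Your proposal is correct and follows essentially the same route as the paper's proof: the same paraproduct split into High--Low--Low, High--High--Low, and High--High--High, the same slab/cube count in the High--Low--Low regime, and the same trivial angular summation in the two high--high regimes. The differences are only presentational: you keep the sharper sector count $(1+2^{k_2-k_1/2})(1+2^{k_3-k_1/2})$ where the paper further weakens it to $(1+2^{k_2/2})(1+2^{k_3/2})$ (either suffices under $s\geq\frac{d-1}{4}$), and in the high--high regimes you make the exponent arithmetic fully explicit, writing $\sum_l 2^{l(-s-1+\frac{3(d-1)}{4})}A_lB_l$ and $\sum_l 2^{l(d-2-2s)}A_lB_lC_l$, whereas the paper phrases the same computation as the sufficient inequality $k(s-1+\tfrac{d-1}{4}+\veps)\le(k_1+k_2+k_3)(s-\tfrac{d-1}{4})+M$. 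For the record, your all-comparable exponent $d-2-2s\le 0$ does indeed follow from the hypotheses, since $\max\bigl(\tfrac{3(d-1)}{4}-1,\ \tfrac{d-1}{4}\bigr)\ge\tfrac{d-2}{2}$ for all $d\ge2$. The one thing you glossed over is the degenerate case $s-1+\tfrac{d-1}{4}\le 0$, where summing over $k\le l+5$ does not produce $2^{l(s-1+\frac{d-1}{4})}$ but rather $O(1)$ or $O(l)$; the paper dismisses this explicitly as trivial, and your strict inequality $s>\tfrac{3(d-1)}{4}-1$ absorbs the possible logarithmic loss anyway, so it is a cosmetic rather than a substantive gap.
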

\begin{proof}
The approach to the proof is similar to Lemma \ref{lem:ProductEstimateI}, so we only indicate how to deal with the relevant terms, involving indices $k,k_{i}\in\N_{0}$ for $1\leq i\leq 3$. 

For the $High \times Low \times Low$ term, we consider $2^k \eqsim 2^{k_1} \gg 2^{k_{2}} \geq 2^{k_3}$ and
\[
I:=2^{k(s-1-\frac{d-1}{4})}\sum_{\nu\in\Theta_{k}}\Big\| \chi^N_\nu(D) \Big(\prod_{i=1}^{3}\chi_{\nu_{i}} ^{k_{i}}(D)f_{i}\Big)\Big\|_{L^{p}(\Rd)},
\]
for $\nu_{i}\in\Theta_{k_{i}}$, $1\leq i\leq 3$. We have to estimate the number of $\nu$ for which the support of $\chi_{\nu}$ intersects the support of $\chi^{k_1}_{\nu_{1}} * \chi^{k_2}_{\nu_2} * \chi^{k_3}_{\nu_3}$. Note that $\chi^{k_2}_{\nu_2}* \chi^{k_3}_{\nu_3}$ is supported in a slab of dimensions approximately $2^{\max(k_3,k_2/2)} \times 2^{k_2/2}\times \ldots \times 2^{k_2/2} \times 2^{k_2}$. This we subdivide into cubes of side length no more than $2^{k_1/2}$, of which there are no more than approximately $(1+2^{k_3-k_{1}/2})(1+ 2^{k_2-k_{1}/2})$. This yields
\[
I\lesssim 2^{k_{1}(s-1-\frac{d-1}{4})} (1+2^{k_3/2})(1+ 2^{k_2/2}) \prod_{i=1}^3  \| \chi^{k_i}_{\nu_i}(D) f_i \|_{L^{p_i}(\Rd)}.
\]
Since $s\geq \frac{d-1}{4}$, this suffices for the $High \times Low \times Low$ term.

Next, for the $High \times High \times Low$ term, we consider $2^{k_1} \eqsim 2^{k_2} \gg 2^{k_3}$. Then nonzero contributions only arise for $2^{k}\lesssim 2^{k_{1}}$. Moreover, trivial summation yields
\begin{align*}
&\;2^{k(s-1-\frac{d-1}{4})}\sum_{\nu_{k}\in\Theta_{k}} \Big\| \chi^k_\nu(D) \Big(\prod_{i=1}^3 \chi^{k_i}_{\nu_i}(D) f_i \Big) \Big\|_{L^p(\Rd)}\\
& \lesssim 2^{k(s-1+\frac{d-1}{4})} \prod_{i=1}^3 \| \chi^{N_i}_{\nu_i}(D) f_i \|_{L^{p_i}(\Rd)}.
\end{align*}
Thus, to deal with the $High \times High \times Low$ term, it suffices to show that
\[
k(s-1+\tfrac{d-1}{4}+\veps)\leq   (k_{1}+k_{2}+k_{3})(s-\tfrac{d-1}{4})+M
\]
for some $\veps,M>0$. The above display is trivial for $s-1 + \frac{d-1}{4} < 0$, so we assume this is not the case. We first use that $2^{k}\lesssim 2^{k_{1}}\eqsim 2^{k_{2}}$ and $s>\frac{3(d-1)}{4}-1$ to find:
\begin{align*}
k(s-1+\tfrac{d-1}{4}+\veps)&\leq \tfrac{k_{1}+k_{2}}{2}(s-1+\tfrac{d-1}{4}+\veps)+M\\
&\leq (k_{1}+k_{2})(s-\tfrac{d-1}{4})+M,
\end{align*}
for suitable $\veps,M>0$. Moreover, $0\leq k_{3}(s-\frac{d-1}{4})$ since $s\geq \frac{d-1}{4}$.

The $High \times High \times High$ term can be dealt with in the exact same way as the $High \times High \times Low$ term. By symmetry, this concludes the proof.
\end{proof}

\section{Local smoothing in $\Ba^{s}_{p,2,2}(\Rd)$}
\label{section:LocalSmoothing}

In this section we prove Theorem \ref{thm:LocalSmoothingHardy}. The proof is analogous to that of \cite[Theorem 1.1]{Rozendaal2021LocalSmoothing}. In particular, the key to the proof is the following proposition, which generalizes the case $q=p$ in \cite[Corollary 4.2]{Rozendaal2021LocalSmoothing} to arbitrary $q\in[1,\infty)$.

\begin{proposition}\label{prop:equivwave}
Let $p\in[1,\infty]$, $q\in[1,\infty)$ and $s \in \R$, and let $0\neq g\in\Sw(\R)$. Then there exists a $C>0$ such that the following holds. Let $f\in\Sw'(\Rd)$ be such that $\supp(\wh{f}\,)\subseteq\{\xi\in\Rd\mid 2^{k-1}\leq |\xi|\leq 2^{k+1}\}$ for some $k\in\N$. Then
\begin{align*}
\frac{1}{C}\|f\|_{\Baspqq}&\leq 2^{k(s+\frac{d-1}{2}(\frac{1}{2}-\frac{1}{q}))}\Big(\sum_{\nu\in\Theta_{k}}\|g(t)e^{it\sqrt{-\Delta}}\chi_{\nu}(D)f\|_{L^{p}(\R\times\Rd)}^{q}\Big)^{1/q}\\
&\leq C\|f\|_{\Baspqq}
\end{align*} 
whenever one of these quantities is finite. Hence an $f \in \Sw'(\Rd)$ satisfies $f\in\Baspqq$ if and only if 
\begin{equation}\label{eq:equivalent2}
\Big(\sum_{k=0}^{\infty}2^{qk(s+\frac{d-1}{2}( \frac{1}{2} - \frac{1}{q}))}\sum_{\nu\in\Theta_{k}}\| g(t)e^{it\sqrt{-\Delta}}\chi_\nu^{k}(D)f \|^q_{L^{p}(\R\times \Rd)}\Big)^{1/q}
\end{equation}
is finite, and \eqref{eq:equivalent2} defines an equivalent norm on $\Baspqq$.
\end{proposition}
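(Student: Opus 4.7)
The plan is to deduce the equivalence by a term-wise comparison with Proposition \ref{prop:discrete}. By Lemma \ref{lem:dyadic}, the second (global) statement of the proposition follows from the first (dyadic) one, so I would first reduce to the case that $f$ has frequency support in a single dyadic annulus $\{|\xi|\sim 2^k\}$. Invoking Proposition \ref{prop:discrete}, it then suffices to establish
\[
\|g(t)e^{it\sqrt{-\Delta}}\chi_\nu(D)f\|_{L^p(\R\times\Rd)}\eqsim \|\chi_\nu(D)f\|_{L^p(\Rd)}
\]
with constants independent of $k$ and $\nu\in\Theta_k$, and then raise to the $q$-th power and sum over $\nu$.

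The technical heart will be a plate-wise fixed-time stability statement: for every $T>0$ there should exist $C=C(T,p)$ such that, whenever $h\in\Sw'(\Rd)$ satisfies $\supp(\wh{h}\,)\subseteq \{\xi\mid 2^{k-1}\leq|\xi|\leq 2^{k+1},\,|\hat\xi-\nu|\leq 2^{2-k/2}\}$ and $|t|\leq T$, one has $\|e^{it\sqrt{-\Delta}}h\|_{L^p(\Rd)}\leq C\|h\|_{L^p(\Rd)}$ uniformly in $k$ and $\nu$. To prove this I would factor the phase as $|\xi|=\nu\cdot\xi+r_\nu(\xi)$ with $r_\nu(\xi):=|\xi|-\nu\cdot\xi$. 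On the plate, $r_\nu$ is a real symbol of size $O(1)$ whose tangential and radial derivatives decay like $2^{-k/2}$ and $2^{-k}$, respectively, as a consequence of the second-order vanishing of $r_\nu$ on the ray $\R_+\nu$. Multiplying an enlarged cutoff $\tilde\chi_\nu^k$ as in \eqref{eq:packets} by $e^{itr_\nu(\xi)}$ then preserves the derivative bounds underlying \eqref{eq:kernelbounds}, so the convolution kernel of $e^{itr_\nu(D)}\tilde\chi_\nu^k(D)$ has $L^1$-norm bounded uniformly in $k$, $\nu$ and $|t|\leq T$. Since $e^{it\nu\cdot D}$ is an $L^p$-isometry and $\chi_\nu^k=\tilde\chi_\nu^k\chi_\nu^k$, Young's inequality yields the upper bound; the matching lower bound follows by applying the upper bound to $e^{-it\sqrt{-\Delta}}$, which preserves the same plate.

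With plate stability in hand, the two-sided equivalence per packet follows easily. For the upper bound I would split $\R=\{|t|\leq 1\}\cup\{|t|>1\}$, applying plate stability on the former and combining a polynomial-growth estimate on plates (obtained via a dilation argument as in Proposition \ref{prop:PolynomialGrowth}) with the Schwartz decay of $g$ on the latter. For the lower bound I would pick $t_0\in\R$ and $\delta,c>0$ with $|g(t)|\geq c$ on $[t_0-\delta,t_0+\delta]$, possible since $g\not\equiv 0$, and integrate plate stability over this interval. Summing the resulting uniform equivalences to the $q$-th power in $\ell^q(\Theta_k)$ and comparing with Proposition \ref{prop:discrete} yields the first display; Lemma \ref{lem:dyadic} will then promote this to the second displayed equivalence on all of $\Baspqq$.

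The main obstacle will be the plate stability, and more specifically the uniform $L^1$ bound for the kernel of $e^{itr_\nu(D)}\tilde\chi_\nu^k(D)$. The crucial input is the second-order vanishing of $r_\nu$ on the ray $\R_+\nu$, which is precisely the geometric feature of the cone that underlies $\ell^2$-decoupling and makes the wave propagator respect the plate decomposition. Once this is settled, everything else is organizational and closely parallels the template of \cite[Corollary~4.2]{Rozendaal2021LocalSmoothing} for $q=p$; the passage from $q=p$ to general $q\in[1,\infty)$ requires no new analytic input beyond replacing the fixed-exponent $\ell^p$ sum over $\nu$ by the corresponding $\ell^q$ sum, which is legitimate precisely because the per-packet equivalence is uniform in $\nu$.
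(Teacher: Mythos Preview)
Your proposal is correct and follows essentially the same route as the paper: reduce to a per-packet equivalence $\|g(t)e^{it\sqrt{-\Delta}}\chi_\nu(D)f\|_{L^p(\R\times\Rd)}\eqsim\|\chi_\nu(D)f\|_{L^p(\Rd)}$, obtained from plate-wise bounds on $e^{it\sqrt{-\Delta}}$, and then invoke Proposition~\ref{prop:discrete} and Lemma~\ref{lem:dyadic}. The only differences are cosmetic: the paper cites the plate stability as ``kernel bounds'' (or equivalently Remark~\ref{rem:dyadicpar} combined with Proposition~\ref{prop:PolynomialGrowth}) rather than spelling out the phase factoring $|\xi|=\nu\cdot\xi+r_\nu(\xi)$, and it uses the polynomial-growth bound $(1+|t|)^N$ directly for all $t$, so your split into $\{|t|\le 1\}$ and $\{|t|>1\}$ is correct but unnecessary.
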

\begin{proof}
It is straightforward to deal with the low frequencies, using similar estimates as in the high-frequency case, so we may assume that $\rho(D)f=0$. Moreover, by Lemma \ref{lem:dyadic}, the second statement follows from the first.

For the first statement, note that there exists an $N\geq0$ such that
\[
\|e^{it\sqrt{-\Delta}}\chi_{\nu}(D)f\|_{L^{p}(\Rd)}\lesssim (1+|t|)^{N}\|\chi_{\nu}(D)f\|_{L^{p}(\Rd)}
\]
for all $\nu\in\Theta_{k}$ and $t\in\R$, as follows either from kernel bounds, or by combining Remark \ref{rem:dyadicpar} and Proposition \ref{prop:PolynomialGrowth}. Either way, one thus has
\begin{align*}
\|\chi_{\nu}(D)g(t)e^{it\sqrt{-\Delta}}f\|_{L^{p}(\R\times\Rd)}&=\Big(\int_{\R}|g(t)|\|e^{it\sqrt{-\Delta}}\chi_{\nu}(D)f\|^{p}_{L^{p}(\Rd)}dt\Big)^{1/p}\\
&\lesssim \|\chi_{\nu}(D)f\|_{L^{p}(\Rd)}.
\end{align*}
This in turn yields
\begin{align*}
&2^{k(s+\frac{d-1}{2}(\frac{1}{2}-\frac{1}{q}))}\Big(\sum_{\nu\in\Theta_{k}}\|g(t)e^{it\sqrt{-\Delta}}\chi_{\nu}(D)f\|_{L^{p}(\R\times\Rd)}^{q}\Big)^{1/q}\\
&\lesssim 2^{k(s+\frac{d-1}{2}(\frac{1}{2}-\frac{1}{q}))}\Big(\sum_{\nu\in\Theta_{k}}\|\chi_{\nu}(D)f\|_{L^{p}(\Rd)}^{q}\Big)^{1/q}\eqsim \|f\|_{\Baspqq},
\end{align*}
by Proposition \ref{prop:discrete}.

On the other hand, for all $\nu\in\Theta_{k}$ one has
\[
\|\chi_{\nu}(D)f\|_{L^{p}(\Rd)}=\|e^{-it\sqrt{-\Delta}}\chi_{\nu}(D)e^{it\sqrt{-\Delta}}f\|_{L^{p}(\Rd)}\lesssim \|g(t)e^{it\sqrt{-\Delta}}\chi_{\nu}(D)f\|_{L^{p}(\Rd)}
\]
on any compact interval $I\subseteq\R$ such that $|g(t)|\gtrsim 1$ for all $t\in I$. Hence
\begin{align*}
\|f\|_{\Baspqq}&\eqsim 2^{k(s+\frac{d-1}{2}(\frac{1}{2}-\frac{1}{q}))}\Big(\sum_{\nu\in\Theta_{k}}\|\chi_{\nu}(D)f\|_{L^{p}(\Rd)}^{q}\Big)^{1/q}\\
&\lesssim 2^{k(s+\frac{d-1}{2}(\frac{1}{2}-\frac{1}{q}))}\Big(\sum_{\nu\in\Theta_{k}}\|g(t)e^{it\sqrt{-\Delta}}\chi_{\nu}(D)f\|_{L^{p}(\R\times\Rd)}^{q}\Big)^{1/q},
\end{align*}
again by Proposition \ref{prop:discrete}.
\end{proof}

The proof of Theorem \ref{thm:LocalSmoothingHardy} is now almost immediate.

\begin{proof}[Proof~of~Theorem~\ref{thm:LocalSmoothingHardy}]
Let $g\in\Sw(\R)$ be such that $|g(t)|\geq1$ for $t\in[0,1]$, and $\supp(\wh{g}\,)\subseteq [-1,1]$. Let $\veps>0$ and $f\in\Ba^{s}_{p,2,2}(\Rd)$. We apply the Littlewood--Paley decomposition $(\psi_{k})_{k=0}^{\infty}$ to $f$. Moreover, we can use a kernel estimate for the low frequencies, so we may assume that $\psi_{0}(D)f=0$. Then the $\ell^{2}$-decoupling inequality \eqref{eq:DecouplingConeIntroduction}, with $\veps$ replaced by $\veps/2$, yields
\begin{align*}
&\| e^{it \sqrt{- \Delta}} f \|_{L_t^p([0,1],L^p(\R^d))} \leq  \sum_{k=1}^{\infty}\|e^{it \sqrt{- \Delta}} \psi_{k}(D)f \|_{L_t^p([0,1],L^p(\R^d))}\\
&\lesssim \sum_{k=1}^{\infty}2^{k(\overline{s}(p) + \varepsilon/2)} \Big( \sum_{\nu\in\Theta_{k}} \|g(t)e^{it \sqrt{- \Delta}} \chi_\nu(D)\psi_{k}(D)f \|^2_{L^p(\R\times\Rd)} \Big)^{1/2}.
\end{align*}
Now Proposition \ref{prop:equivwave} implies that the final quantity is equivalent to 
\begin{align*}
\sum_{k=1}^{\infty}2^{-k\varepsilon/2} \|\psi_{k}(D)f\|_{\Ba^{\overline{s}(p)+\veps}_{p,2,2}(\Rd)}\lesssim \sum_{k=1}^{\infty}2^{-k\varepsilon/2} \|f\|_{\Ba^{\overline{s}(p)+\veps}_{p,2,2}(\Rd)}\eqsim \|f\|_{\Ba^{\overline{s}(p)+\veps}_{p,2,2}(\Rd)}.
\end{align*}
This concludes the proof.
\end{proof}

\begin{remark}\label{rem:sharpness2}
For each $2<p<\infty$ the exponent $\overline{s}(p)$ in Theorem \ref{thm:LocalSmoothingHardy} is sharp, in the sense that, for any $s<\overline{s}(p)$, there does not exist a $C\geq0$ such that
\begin{equation}\label{eq:sharpness}
\| e^{it \sqrt{- \Delta}} f \|_{L_t^p([0,1],L^p(\R^d))}\leq C\|f\|_{\Ba^{s}_{p,2,2}(\Rd)}
\end{equation}
for all $f\in\Ba^{s}_{p,2,2}(\Rd)$. This follows immediately from the sharpness of the estimates in \cite{Rozendaal2021LocalSmoothing}, combined with \eqref{eq:EmbeddingRefinement}. However, we can also give a more direct argument.

Indeed, first note that \eqref{eq:sharpness} and \eqref{eq:EmbeddingRefinement} combine to yield
\begin{equation}\label{eq:sharpness2}
\| e^{it \sqrt{- \Delta}} f \|_{L_t^p([0,1],L^p(\R^d))}\lesssim_{\veps}\|f\|_{W^{s+s(p)+2\veps,p}(\Rd)}
\end{equation}
for all $f\in W^{s+s(p)+2\veps,p}(\Rd)$ and $\veps>0$. Hence, for $p\geq 2(d+1)/(d-1)$, by choosing $\veps$ sufficiently small, \eqref{eq:sharpness2} improves upon the local smoothing estimates in \eqref{eq:LocalSmoothingConjecture}. Since these are known to be sharp, \eqref{eq:sharpness} cannot hold for $p\geq 2(d+1)/(d-1)$.

On the other hand, suppose $f\in \Ba^{s}_{p,2,2}(\Rd)$ is such that 
\[
\supp(\wh{f}\,)\subseteq\{\xi\in\Rd\mid 2^{k-1}\leq |\xi|\leq 2^{k+1}, |\hat{\xi}-\nu|\leq 2^{1-k/2}\}
\]
for some $k\in\N$ and $\nu\in\Su^{d-1}$. Then Remark \ref{rem:dyadicpar}, Proposition \ref{prop:PolynomialGrowth} and \eqref{eq:sharpness} yield
\begin{align*}
\|f\|_{L^{p}(\Rd)}&\eqsim \|f\|_{B^{0}_{p,2}(\Rd)}\eqsim \|f\|_{\Ba^{0}_{p,2,2}(\Rd)}\eqsim \| e^{it \sqrt{- \Delta}} f \|_{L_t^p([0,1],\Ba^{0}_{p,2,2}(\R^d))}\\
&\eqsim \| e^{it \sqrt{- \Delta}} f \|_{L_t^p([0,1],L^p(\R^d))}\lesssim \|f\|_{\Ba^{s}_{p,2,2}(\Rd)}\eqsim \|f\|_{B^{s}_{p,2}(\Rd)}\eqsim 2^{ks}\|f\|_{L^{p}(\Rd)}.
\end{align*}
Since $\overline{s}(p)=0$ for $2<p<2(d+1)/(d-1)$, this leads to a contradiction, and \eqref{eq:sharpness} cannot hold for such $p$.
\end{remark}

\section{Well-posedness for nonlinear wave equations}
\label{section:LWPNLW}

In this section we will mainly focus on the cubic nonlinear wave equation
\begin{equation}
\label{eq:CubicNLW}
%\left\{\!\begin{array}{cl}
\begin{cases}
\partial_t^2 u - \Delta_{x} u = \pm |u|^2 u, \quad (t,x) \in \R \times \R^2, \\
u(0) = f \in X, \quad \dot{u}(0) = g \in Y,
\end{cases}
%\end{array} \right.
\end{equation}
outside $L^2$-based Sobolev spaces. 

We first collect some preliminaries. In Section \ref{subsec:localwell} we then prove local well-posedness results for slowly decaying initial data, including a theorem for the quintic nonlinear wave equation. The local results do not distinguish between focusing and defocusing nonlinearities. In Section \ref{subsec:slower} we prove local results for initial data which decay even slower than in Section \ref{subsec:localwell}, and finally we prove global results for the \emph{defocusing} equation, that is, \eqref{eq:CubicNLW} with a minus sign on the right hand-side.

\subsection{Preliminaries}\label{subsec:prelim}

Our notion of well-posedness is based on \cite[Section~3]{BejenaruTao2006}. We recall the key elements. We use Duhamel's formula to write \eqref{eq:CubicNLW} as an abstract evolution equation:
\begin{equation}
\label{eq:AbstractEvolution}
u = L(f,g) + N_3(u,u,u),
\end{equation}
where $u \in S$, which is a space-time function space, $L: X \times Y \to S$ is a densely defined linear operator, and $N_3:S \times S \times S \to S$ is a densely defined operator which is either linear or antilinear in each of its variables. In our case one has
\begin{equation*}
\begin{split}
L(f,g)(t)&:=\cos(t\sqrt{-\Delta})f+\frac{\sin(t\sqrt{-\Delta})}{\sqrt{-\Delta}}g, \\
N_{3}(u_{1},u_{2},u_{3})(t)&:=\pm \int_{0}^{t}\frac{\sin((t-s)\sqrt{-\Delta})}{\sqrt{-\Delta}}u_{1}(s)\overline{u_{2}(s)}u_{3}(s)ds.
\end{split}
\end{equation*}
 We say that \eqref{eq:AbstractEvolution} is \emph{quantitatively well posed} (with initial data space $X \times Y$ and solution space $S$) if there exists a $C\geq0$ such that
\begin{align}
\label{eq:LinearAbstractEstimate}
\| L(f,g) \|_{S} &\leq C \| (f,g) \|_{X \times Y}, \\
\label{eq:NonLinearAbstractEstimate}
\| N_3(u_1,u_2,u_3) \|_S &\leq C \prod_{i=1}^3 \| u_i \|_S,
\end{align}
for all $(f,g) \in X \times Y$ and $u_i \in S$, $1\leq i\leq 3$. 

If \eqref{eq:AbstractEvolution} is quantitatively well posed, then it follows from a fixed-point argument (see \cite[Theorem 3]{BejenaruTao2006}) that \eqref{eq:AbstractEvolution} is \emph{analytically locally well posed}. In particular, there exist $C_{0},\veps_{0}>0$ such that, for all $(f,g)\in B_{(X,Y)}(0,\veps_{0})=\{(f',g')\in X\times Y\mid \|(f',g')\|_{X\times Y}<\veps_{0}\}$, there exists a unique solution $u[f,g]\in B_{S}(0,C_{0}\veps_{0})$ to \eqref{eq:AbstractEvolution}. Moreover, the map $(f,g)\mapsto u[f,g]$ is Lipschitz continuous from $B_{(X,Y)}(0,\veps_{0})$ to $B_{S}(0,C_{0}\veps_{0})$, and one can expand $u[f,g]$ in terms of its Picard iterates. That is, define the nonlinear maps $A_m: X \times Y \to S$ recursively:
\begin{equation}\label{eq:Am}
\begin{split}
A_1(f,g) &:= L(f,g), \\
A_m(f,g) &:= \sum_{\substack{ m_1,m_2,m_3 \geq 1: \\ m_1 + m_2 + m_3 = m }} N_3(A_{m_1}(f,g),A_{m_2}(f,g),A_{m_3}(f,g))\quad \text{for } m > 1. 
\end{split}
\end{equation}
Then
\[
u[f,g]=\sum_{m=0}^{\infty}A_{m}(f,g),
\]
where the series converges absolutely in $S$ for all $(f,g)\in B_{(X,Y)}(0,\veps_{0})$. In what follows we define solution spaces $S_{T}$ locally in time, and by improving the estimate \eqref{eq:NonLinearAbstractEstimate} to
\begin{equation}
\label{eq:AbstractEstimatesMod}
\begin{split}
\| L(f,g) \|_{S_T} &\leq C \| (f,g) \|_{X \times Y}, \\
\| N_3(u_1,u_2,u_3) \|_{S_T} &\leq C T^\delta \prod_{i=1}^3 \| u_i \|_{S_T},
\end{split}
\end{equation}
for some $\delta > 0$, we can find a $T=T(\| (f,g) \|_{X \times Y})$, also for large data, such that analytic dependence on the initial data holds in $S_T$. Note that the additional gain in powers of $T$ is only required in the nonlinear estimate, which must be controlled in the Picard iteration.

\medskip

We use the following sharp local smoothing estimate due to Guth--Wang--Zhang \cite{GuthWangZhang2020} to prove the linear estimate \eqref{eq:LinearAbstractEstimate} for initial data in $L^p$-based Sobolev spaces.

\begin{theorem}\label{thm:localsmooth}
Let $p\in(2,\infty)$ and $s > \max( \frac{1}{2} - \frac{2}{p}, 0)$. Then there exists a $C\geq0$ such that
\[
\| e^{it \sqrt{- \Delta}} f \|_{L_t^p([0,1],L^p(\R^2))} \leq C \| f \|_{W^{s,p}(\R^2)}
\]
for all $f\in W^{s,p}(\R^{2})$.
\end{theorem}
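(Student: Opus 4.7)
The strategy is to obtain the estimate at the critical exponent $p=4$ from the reverse $L^4$ square function estimate of Guth--Wang--Zhang, and then derive the remaining range by interpolation with the trivial $L^2$ bound on one side and with decoupling-based bounds on the other. As in the discussion following \eqref{eq:DecouplingConeIntroduction}, a Littlewood--Paley decomposition together with parabolic rescaling reduces matters to the frequency-localized inequality
\[
\|e^{it\sqrt{-\Delta}}\psi_k(D) f\|_{L^p([0,1]\times\R^2)} \lesssim 2^{k(\sigma(p)+\veps)}\|\psi_k(D)f\|_{L^p(\R^2)}
\]
uniformly in $k\in\N$, where $\sigma(p) = \max(\tfrac{1}{2} - \tfrac{2}{p}, 0)$.

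For the endpoint $p=4$, where $\sigma(4)=0$, I would invoke the Guth--Wang--Zhang reverse square function estimate: after parabolic rescaling so that the relevant frequencies live near the unit cone, and after decomposing the unit annulus into angular caps of width $R^{-1/2}$, one has
\[
\|e^{it\sqrt{-\Delta}} f\|_{L^4([0,R]\times\R^2)} \lesssim_\veps R^\veps \Big\|\Big(\sum_\theta |e^{it\sqrt{-\Delta}} f_\theta|^2\Big)^{1/2}\Big\|_{L^4(\R\times\R^2)}.
\]
The right-hand side is bounded by $R^\veps \|f\|_{L^4(\R^2)}$ by expanding the square and using standard bilinear $L^2$ orthogonality for products $f_\theta \overline{f_{\theta'}}$ from distinct angular caps, followed by Plancherel. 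After undoing the rescaling this gives the desired bound with $\sigma(4)=0$ and an arbitrarily small loss.

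The remaining range then follows by real interpolation. For $2 < p < 4$ one interpolates the $p=4$ estimate against the $p=2$ bound, where Plancherel yields no loss of regularity, giving $\sigma(p)=0$ throughout. For $p > 4$ one interpolates against a bound derived from the $\ell^2$-decoupling inequality \eqref{eq:DecouplingConeIntroduction} at some large $p$ (e.g.\ $p=6$, where decoupling is already sharp); combining the right-hand side of \eqref{eq:DecouplingConeIntroduction} with H\"older's inequality to pass from $\ell^2$ to $\ell^p$, together with a Marcinkiewicz-type bound for the angular decomposition $(\chi_\nu)_{\nu\in\Theta_k}$, one recovers $\sigma(p) = \tfrac{1}{2} - \tfrac{2}{p}$ up to an $\veps$-loss.

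The principal obstacle is the reverse $L^4$ square function estimate itself, whose proof in \cite{GuthWangZhang2020} relies on polynomial partitioning and a broad/narrow dichotomy on the light cone in $\R^3$. In a proof of Theorem \ref{thm:localsmooth} I would invoke this estimate as the main external input rather than attempt to reproduce it, and the only real content of my argument would be the reduction of Theorem \ref{thm:localsmooth} to the $p=4$ endpoint via interpolation.
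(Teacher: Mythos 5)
The paper does not prove Theorem~\ref{thm:localsmooth} at all: it is stated as the sharp two-dimensional local smoothing estimate and cited verbatim from Guth--Wang--Zhang \cite{GuthWangZhang2020}, which is where the full result (not just the square function estimate) is established. Your proposal instead quotes only the reverse $L^4$ square function estimate from \cite{GuthWangZhang2020} and reconstructs the remaining deduction; this is a legitimate route, but it essentially reproduces the derivation that already appears in \cite{GuthWangZhang2020} (building on Mockenhaupt--Seeger--Sogge), so it does not buy anything beyond what the citation provides.

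On the substance of the deduction you sketch, a few points need to be stated more carefully. First, the orthogonality step after applying the reverse square function estimate is not ``bilinear orthogonality for $f_\theta\overline{f_{\theta'}}$ from distinct caps''; one expands $\|\sum_\theta |e^{it\sqrt{-\Delta}}f_\theta|^2\|_{L^2}^2$ and uses the C\'ordoba observation that the Fourier supports of the \emph{diagonal} products $|e^{it\sqrt{-\Delta}}f_\theta|^2$, i.e.\ the difference sets of the $\theta$-slabs, have bounded overlap as $\theta$ varies, followed by Plancherel. Second, the square function you write on the right-hand side is over all of $\R$ in time, so the wave propagator has no decay and the pointwise bound $\|e^{it\sqrt{-\Delta}}f_\theta\|_{L^4(\R\times\R^2)}\lesssim\|f_\theta\|_{L^4(\R^2)}$ is false; one must insert the Schwartz time cutoff $g(t)$ with compactly supported $\widehat g$, exactly as in \eqref{eq:DecouplingConeIntroduction}, before applying the kernel estimate and then the vector-valued $\ell^4$ bound $(\sum_\theta\|f_\theta\|_{L^4}^4)^{1/4}\lesssim\|f\|_{L^4}$. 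Third, the decoupling-based argument only yields $\sigma(p)=\frac12-\frac2p$ for $p\geq 6$ in $d=2$, not for all $p>4$; your scheme is still complete because you interpolate between $p=4$ and $p=6$ to fill the gap $4<p<6$ and use decoupling directly for $p\geq 6$, but the sentence suggesting that decoupling plus H\"older already recovers the sharp exponent for $p>4$ should be corrected. Alternatively, one can interpolate $p=4$ against the trivial $L^\infty$ fixed-time bound of Peral--Miyachi (loss $2s(\infty)=\frac12$), which also gives $\frac12-\frac2p$ for all $p\geq4$ without invoking decoupling at all.
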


The smoothing estimate for data in $\mathcal{B}^{s}_{p,2,2}(\R^2)$ is provided by Theorem \ref{thm:LocalSmoothingHardy}.  For the proof of the nonlinear estimate \eqref{eq:NonLinearAbstractEstimate}, we use Strichartz estimates (cf. \cite{KeelTao1998}).
 
\begin{theorem}\label{thm:StrichartzEstimates}
For $i\in\{1,2\}$, let $p_i, q_i \in[2,\infty]$, $q_i \neq \infty$, and $s_{i}\in\R$ be such that $\frac{2}{p_i} + \frac{1}{q_i} = \frac{1}{2}$ and $s_i = 2( \frac{1}{2} - \frac{1}{q_i} ) - \frac{1}{p_i}$. Then there exists a $C\geq0$ such that
for $u = e^{it \sqrt{- \Delta}} u_0 + \int_0^t e^{i(t-s) \sqrt{-\Delta}} f(s) ds$ the following estimate holds:
\begin{equation*}
\| \langle D \rangle^{-s_1} u \|_{L_t^{p_1}([0,T],L^{q_1}(\R^d))} \leq C (\| u_0 \|_{L^2(\R^d)} + \| \langle D \rangle^{s_2} f \|_{L_t^{p_2'}([0,T],L^{q_2'}(\R^d))}.
\end{equation*}
\end{theorem}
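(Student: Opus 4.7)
The plan is to reduce the assertion to a standard application of the Keel--Tao abstract Strichartz machinery~\cite{KeelTao1998} for the half-wave propagator $e^{it\sqrt{-\Delta}}$ on $\R^{d}$ (with $d=2$, as is consistent with the admissibility relation $\frac{2}{p}+\frac{1}{q}=\frac{1}{2}$). The two inputs required are the trivial energy identity $\|e^{it\sqrt{-\Delta}}f\|_{L^{2}(\R^{d})}=\|f\|_{L^{2}(\R^{d})}$ and a frequency-localized dispersive bound. For the latter, I would fix a Littlewood--Paley projector $\psi_{k}(D)$ and prove
\[
\|e^{it\sqrt{-\Delta}}\psi_{k}(D)f\|_{L^{\infty}(\R^{d})}\lesssim 2^{k\frac{d+1}{2}}|t|^{-\frac{d-1}{2}}\|f\|_{L^{1}(\R^{d})}
\]
by a standard stationary-phase analysis of the half-wave kernel on the light cone (or, equivalently, by rescaling the unit-frequency dispersive estimate).

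Next, I would feed these two ingredients into the Keel--Tao framework. Interpolating between the $L^{2}\to L^{2}$ energy bound and the dispersive $L^{1}\to L^{\infty}$ bound, and then running the $TT^{*}$/bilinear duality argument of~\cite{KeelTao1998}, produces, for every wave-admissible pair $(p,q)$ with $\frac{2}{p}+\frac{1}{q}=\frac{1}{2}$, the frequency-localized estimate
\[
\|e^{it\sqrt{-\Delta}}\psi_{k}(D)f\|_{L_{t}^{p}L_{x}^{q}}\lesssim 2^{k\sigma(p,q)}\|\psi_{k}(D)f\|_{L^{2}},
\]
with regularity exponent $\sigma(p,q)=2(\tfrac{1}{2}-\tfrac{1}{q})-\tfrac{1}{p}$ dictated by scaling. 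Summing over $k$ via Littlewood--Paley (which is harmless since $q\geq 2$) and using the $\ell^{2}$-valued embedding into $L^{q}$ for $q<\infty$ gives the homogeneous estimate
\[
\|\langle D\rangle^{-s_{1}}e^{it\sqrt{-\Delta}}u_{0}\|_{L_{t}^{p_{1}}L_{x}^{q_{1}}}\lesssim \|u_{0}\|_{L^{2}}.
\]

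For the inhomogeneous half, the same Keel--Tao bilinear argument, applied to the retarded operator $\int_{0}^{t}e^{i(t-s)\sqrt{-\Delta}}f(s)\,ds$ with two possibly different admissible pairs on the left and on the (dualised) right, yields the mixed-pair bound; the Christ--Kiselev lemma handles the restriction to $s\leq t$ once one has the untruncated version, provided $p_{1}>p_{2}'$, and a direct dyadic argument covers the remaining diagonal case. Restriction from $\R$ to $[0,T]$ is immediate. The main obstacle is purely bookkeeping: one must check that the regularity indices $s_{i}=2(\tfrac{1}{2}-\tfrac{1}{q_{i}})-\tfrac{1}{p_{i}}$ line up with the scaling loss $\sigma(p_{i},q_{i})$ produced by Keel--Tao, and verify that the endpoint condition $q_{i}\neq\infty$ suffices (so one never encounters the forbidden endpoint of Keel--Tao in this two-dimensional setting).
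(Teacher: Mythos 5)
Your proposal is correct and takes essentially the same route as the paper, which does not prove Theorem~\ref{thm:StrichartzEstimates} at all but simply invokes the Keel--Tao framework \cite{KeelTao1998}; your outline is an accurate reconstruction of that machinery (energy bound plus frequency-localized dispersive decay, $TT^{*}$, Littlewood--Paley summation, Christ--Kiselev for the retarded term). One small clarification: in two dimensions the Keel--Tao forbidden endpoint $(2,\infty,1)$ never arises since $\sigma=\tfrac{1}{2}$, so the hypothesis $q_i\neq\infty$ is not needed to apply Keel--Tao but rather to justify the Littlewood--Paley square-function step when summing over dyadic frequency blocks.
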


\subsection{Local well-posedness results}\label{subsec:localwell}

We begin with the local well-posedness result in Theorem \ref{thm:LWPNLW} and Remark \ref{rem:LWP}. 

\begin{proof}[Proof~of~Theorem~\ref{thm:LWPNLW}~and~Remark~\ref{rem:LWP}]
As explained above, it suffices to show that \eqref{eq:AbstractEstimatesMod} holds. In what follows, let $T \leq 1$, which simplifies powers of $T$. More precisely, the proof for $T>1$ is essentially identical, but one has to take into account that low-frequency terms contribute bounds which depend on a different power of $T$. 

We first consider the linear estimate \eqref{eq:LinearAbstractEstimate} with initial data in $W^{s,p}(\Rd)$, as in Remark \ref{rem:LWP}. Theorem \ref{thm:localsmooth} and H\"{o}lder's inequality in time yield
\begin{align*}
%\label{eq:L4Smoothing}
\| e^{it \sqrt{- \Delta}} f \|_{L_t^{24/7}([0,T],L^4(\R^2))} &\lesssim T^{1/24} \| f \|_{W^{\veps,4}(\R^2)},\\
%\label{eq:L6Smoothing}
\| e^{it \sqrt{- \Delta}} f \|_{L_t^4([0,T],L^6(\R^2))} &\lesssim T^{1/12} \| f \|_{W^{1/6+\varepsilon,6}(\R^2)}.
\end{align*}
This yields the linear estimate for $\cos(t \sqrt{- \Delta}) f$ and for the high frequencies of $\frac{\sin (t \sqrt{- \Delta})}{\sqrt{- \Delta}}g$. On the other hand, the low-frequency estimate holds since
\begin{equation}\label{eq:Mikhlin}
\rho(D) \lb D\rb \frac{\sin (t \sqrt{- \Delta})}{\sqrt{- \Delta}}: L^p(\R^{2}) \to L^p(\R^{2})
\end{equation}
for all $1<p<\infty$, with locally uniform bounds in $t$, due to Mikhlin's theorem. Here $\rho\in C^{\infty}_{c}(\R^{2})$ is the low-frequency cutoff from before. Note that for $|t| \gg 1$ we had to take into account growth in $t$.

Now consider the linear estimate \eqref{eq:LinearAbstractEstimate} for initial data in $\mathcal{B}^{s}_{p,2,2}(\R^2)+\dot{H}^{\tilde{s}}(\R^{2})$, cf.~Theorem \ref{thm:LWPNLW}. Recall that, for $p=4$, we consider the solution space
\begin{equation}\label{eq:solutionspace}
S_{T}=L^{24/7}_t([0,T],L^{4}(\Rtwo))\cap C([0,T],\Ba^{\veps}_{4,2,2}(\Rtwo)+\dot{H}^{3/8}(\Rtwo)).
\end{equation}
To obtain the linear estimate for the first space on the right-hand side, we again rely on H\"older's inequality, Theorem \ref{thm:LocalSmoothingHardy}, and on linear Strichartz estimates as in Theorem \ref{thm:StrichartzEstimates}. More precisely, let $f = f_1 + f_2$ with $f_1 \in \mathcal{B}^{\varepsilon}_{4,2,2}(\R^2)$ and $f_2 \in \dot{H}^{3/8}(\R^2)$. Then Theorems \ref{thm:LocalSmoothingHardy} and \ref{thm:StrichartzEstimates} yield
\begin{equation*}
\| e^{it \sqrt{-\Delta}} f \|_{L_t^{24/7}([0,T], L^4(\R^{2}))} \lesssim T^{1/24} (\| f_1 \|_{\mathcal{B}^{\varepsilon}_{4,2,2}(\R^2)} + \| f_2 \|_{\dot{H}^{3/8}(\R^2)}).
\end{equation*}
Note that we can likewise estimate $f_2$ in $H^s(\Rtwo)$ for $s \geq 3/8$. By taking the infimum over all decompositions $f = f_1+f_2$ in $\mathcal{B}^\varepsilon_{4,2,2}(\Rtwo) + \dot{H}^{3/8}(\Rtwo)$, we find
\begin{equation*}
\| \cos(t \sqrt{-\Delta}) f \|_{L_t^{24/7}([0,T], L^4(\R^{2}))} \lesssim T^{1/24} \| f \|_{\mathcal{B}^{\varepsilon}_{4,2,2}(\R^2) + \dot{H}^{3/8}(\R^2)}.
\end{equation*}
Next, write $g=g_{1}+g_{2}$ with $g_{1}\in B^{\veps-1}_{4,2,2}(\Rtwo)$ and $g_{2}\in \dot{H}^{-5/8}(\Rtwo)$. To obtain
\begin{equation*}
\Big\| \frac{\sin(t \sqrt{-\Delta})}{\sqrt{-\Delta}} g \Big\|_{L_t^{24/7}([0,T], L^4(\R^{2}))} \lesssim T^{1/24} (\| g_{1} \|_{\mathcal{B}^{\varepsilon-1}_{4,2,2}(\R^2)} + \|g_{2}\|_{\dot{H}^{-5/8}(\R^2)})
\end{equation*}
one proceeds in the same way when it comes to $g_{2}$ and the high frequencies of $g_{1}$, using the additional smoothing. On the other hand, for the low frequencies of $g_{1}$, one can argue as in \eqref{eq:Mikhlin}. Indeed, one has
\begin{equation}\label{eq:lowfreqadapted}
\rho(D) \frac{\sin (t \sqrt{- \Delta})}{\sqrt{- \Delta}}: \Ba^{\veps-1}_{p,2,2}(\R^{2}) \to\Ba^{s(p)+\veps}_{p,p,p}(\Rtwo)\subseteq B^{\veps}_{p,p}(\Rtwo)\subseteq L^p(\R^{2}).
\end{equation}
Here we used Proposition \ref{prop:discrete}, Mikhlin's theorem and trivial summation to obtain the mapping property, and \eqref{eq:SobolevBesov} and standard embeddings from Besov spaces into $L^{p}(\Rd)$ for the inclusions. This proves the linear estimate \eqref{eq:LinearAbstractEstimate} for the first space on the right-hand side of \eqref{eq:solutionspace}.

To show the linear estimate involving the solution space $C([0,T],\Ba^{\veps}_{4,2,2}(\Rtwo)+\dot{H}^{3/8}(\Rtwo))$, we use the invariance of $\Ba^{\veps}_{4,2,2}(\Rtwo)$ and $\dot{H}^{3/8}(\Rtwo))$ under the half-wave group, and a similar argument as in \eqref{eq:lowfreqadapted} to deal with the low frequencies of $\sin(t\sqrt{-\Delta})/\sqrt{-\Delta}$.

Finally, by relying instead on the $L^{6}_{t}([0,T],L^{6}(\Rtwo))$ smoothing estimate in Theorem \ref{thm:LocalSmoothingHardy}, as well as the $L_t^6([0,T], L^6(\R^2))$ Strichartz estimate, we obtain
\begin{equation*}
\| \cos(t \sqrt{-\Delta}) f \|_{L_t^4([0,T],L^6(\R^2))} \lesssim T^{1/12} \| f \|_{\mathcal{B}^\varepsilon_{6,2,2}(\R^2) + \dot{H}^{1/2}(\R^2)}.
\end{equation*}
Similarly,
\begin{equation*}
\Big\| \frac{\sin(t \sqrt{-\Delta})}{\sqrt{-\Delta}} g \Big\|_{L_t^4([0,T], L^6(\R^{2}))} \lesssim T^{1/12} \| g \|_{\mathcal{B}^{\varepsilon-1}_{6,2,2}(\R^2) + \dot{H}^{-1/2}(\R^2)}.
\end{equation*}
Moreover, to obtain the linear estimate for the solution space $C([0,T];\Ba^{\veps}_{6,2,2}(\Rtwo)+\dot{H}^{1/2}(\Rtwo))$, one argues as above. This takes care of the linear estimate \eqref{eq:LinearAbstractEstimate} for both Theorem \ref{thm:LWPNLW} and Remark \ref{rem:LWP}.

We turn to the trilinear estimate \eqref{eq:NonLinearAbstractEstimate}, as a consequence of Strichartz estimates. We will first prove for $0 < T \leq 1$
\begin{equation}
\label{eq:NonlinearEstimateI}
\begin{split}
&\quad \Big\| \int_0^t \frac{\sin((t-s) \sqrt{-\Delta})}{\sqrt{-\Delta}} (u_{1}\overline{u_{2}} u_{3})(s) ds \Big\|_{L_t^{24/7}([0,T],L^4(\R^2))} \\
&\lesssim T^{3/24} \prod_{i=1}^{3}\|u_{i}\|_{L_t^{24/7}([0,T],L^4(\R^2))}.
\end{split}
\end{equation}
To this end, for the high frequencies, we use Theorem \ref{thm:StrichartzEstimates} with $p_{1}=p_{2} = 8$, $q_1=q_{2} = 4$, to find
%\footnote{Shouldn't there be a $T^{1/6}$ here? If you apply H\"{o}lder to go from $L^{24/7}_{t}$ to $L^{8}_{t}$?}
\begin{equation*}
\begin{split}
&\quad \Big\| \int_0^t \frac{\sin((t-s) \sqrt{-\Delta})}{\sqrt{-\Delta}} (1-\rho)(D) (u_{1}\overline{u_{2}}u_{3})(s) ds \Big\|_{L_t^{24/7}([0,T],L^4(\Rtwo))} \\
&\lesssim T^{1/6} \Big\| \frac{|D|^{6/8}}{|D|} (1-\rho)(D) (u_{1}\overline{u_{2}}u_{3}) \Big\|_{L_t^{8/7} L^{4/3}} \lesssim T^{1/8} \| u_{1}\overline{u_{2}}u_{3} \|_{L_t^{8/7} L^{4/3}}\\
&\lesssim T^{1/6} \prod_{i=1}^{3}\| u_{i} \|^3_{L_t^{24/7}([0,T], L^4(\Rtwo))}.
\end{split}
\end{equation*}
The low frequencies $\rho(D) (u_{1}\overline{u_{2}}u_{3})$ are estimated using Minkowski's inequality, Mikhlin's theorem and a Sobolev embedding:
\begin{equation*}
\begin{split}
&\quad \Big\| \int_0^t \frac{\sin((t-s) \sqrt{-\Delta})}{\sqrt{-\Delta}} \rho(D) (u_{1}\overline{u_{2}}u_{3})(s) ds \Big\|_{L_t^{24/7}([0,T],L^4(\Rtwo))} \\
&\lesssim T^{7/24} \| u_{1}\overline{u_{2}}u_{3}\|_{L_t^{1} W^{-1,4}} \lesssim T^{10/24} \| u_{1}\overline{u_{2}}u_{3} \|_{L_t^{8/7} L^{4/3}}\\
&\lesssim T^{10/24} \prod_{i=1}^{3}\| u_{i} \|^3_{L_t^{24/7}([0,T], L^4(\Rtwo))}.
\end{split}
\end{equation*}
This already concludes the proof for initial data in $W^{\veps,4}(\R^{2})\times W^{-1+\veps,4}(\Rtwo)$. 

For initial data involving the $\Ba^{s}_{4,2,2}(\Rtwo)$ spaces, we also need to consider the solution space $C([0,T],\dot{H}^{3/8}(\Rtwo))$. For the high frequencies, we use Minkowski's inequality and a Sobolev embedding:
\begin{equation*}
\begin{split}
&\; \Big\| \int_0^t \frac{\sin((t-s) \sqrt{-\Delta})}{\sqrt{-\Delta}} (1-\rho)(D) (u_1 \bar{u}_2 u_3)(s) ds \Big\|_{L^{\infty}_{t}([0,T],\dot{H}^{3/8}(\Rtwo))} \\
&\lesssim \| \langle D \rangle^{-5/8} (u_1 \overline{u_2} u_3) \|_{L_t^1([0,T], L^{2}(\Rtwo))} \lesssim T^{3/24} \prod_{i=1}^{3}\| u_i \|_{L_t^{24/7}([0,T], L^4(\Rtwo))}.
\end{split}
\end{equation*}
The argument for the low frequencies is almost identical, although one can use Plancherel's theorem to estimate away the singularity at zero:
\begin{equation*}
\begin{split}
&\; \Big\| \int_0^t \frac{\sin((t-s) \sqrt{-\Delta})}{\sqrt{-\Delta}} \rho(D) (u_1 \overline{u_2} u_3)(s) ds \Big\|_{L^{\infty}_{t}([0,T],\dot{H}^{3/8}(\Rtwo))} \\
&\lesssim \| u_1 \overline{u_2} u_3 \|_{L_t^1([0,T], L^{4/3}(\Rtwo))} \lesssim T^{3/24} \prod_{i=1}^3 \| u_i \|_{L_t^{24/7}([0,T], L^4(\Rtwo))}.
\end{split}
\end{equation*}
Since $T^{\frac{3}{24}} = \max( T^{\frac{10}{24}}, T^{\frac{3}{24}}, T^{\frac{1}{6}})$ for $0 < T \leq 1$, we choose this factor in \eqref{eq:NonlinearEstimateI}.
This proves the required supremum norm bounds, while the continuity statements are automatic, since the half-wave group is strongly continuous on $\dot{H}^{s}(\Rtwo)$. This also concludes the proof for initial data as in \eqref{eq:LWPNLW1}.

Finally, we deal with the trilinear estimate for $p=6$. We first prove for some $\kappa > 0$
%\footnote{Don't we get a different power here? Not just because of the high-frequency part, but also because of the low frequencies (the details of which we have not written out)? Maybe just write it as an unspecified power?}
\begin{equation}
\label{eq:NonlinearEstimateII}
\Big\| \int_0^t \frac{\sin((t-s) \sqrt{-\Delta})}{\sqrt{-\Delta}} (u_{1}\overline{u_{2}} u_{3})(s) ds \Big\|_{L_t^4([0,T],L^6(\Rtwo))}\lesssim T^{\kappa} \prod_{i=1}^{3}\|u_{i}\|_{L^{4}_{t}([0,T],L^{6}(\Rtwo))}.
\end{equation}
The estimate of the low frequencies is as before, so it suffices to use Strichartz estimates with $p_1 = q_1 = 6$ and $p_2 = \infty$, $q_2 = 2$:
\begin{equation*}
\begin{split}
&\quad \Big\| \int_0^t \frac{\sin((t-s) \sqrt{- \Delta})}{\sqrt{-\Delta}} (1-\rho)(D) (u_1 \overline{u_2} u_3)(s) ds \Big\|_{L_t^4([0,T], L^6(\Rtwo))} \\
&\lesssim T^{1/12} \Big\| \int_0^t \frac{\sin((t-s) \sqrt{- \Delta})}{\sqrt{-\Delta}} (1-\rho)(D) (u_1 \overline{u_2} u_3)(s) ds \Big\|_{L_t^6L^6_{x}} \\
&\lesssim T^{1/12} \| u_1 \overline{u_2} u_3 \|_{L_t^1 L^2_{x}} \lesssim T^{1/3} \prod_{i=1}^3 \| u_i \|_{L_t^4([0,T], L^6(\Rtwo))}.
\end{split}
\end{equation*}
This proves the required statement for initial data in $W^{1/6+\veps,6}(\R^{2})\times W^{-5/6+\veps,6}(\Rtwo)$ and concludes the proof of Remark \ref{rem:LWP}.

On the other hand, for the local well-posedness with initial data in $\mathcal{B}^{s}_{6,2,2}(\Rtwo)$ we also have to consider the solution space $C([0,T],\dot{H}^{1/2}(\Rtwo))$, in the following sense:
\begin{align*}
&\quad \Big\| \int_0^t \frac{\sin((t-s) \sqrt{- \Delta})}{\sqrt{- \Delta}} (u_1 \overline{u_2} u_3)(s) ds \Big\|_{L_t^\infty([0,T], \dot{H}^{1/2}(\Rtwo))}\\
&\lesssim T^{1/12} \prod_{i=1}^3 \| u_i \|_{L_t^4([0,T], L^6(\Rtwo))}.
\end{align*}
The estimate for the low frequencies is carried out by Plancherel's theorem, while for the high frequencies the argument is
\begin{equation*}
\begin{split}
&\quad\Big\| \int_0^t \frac{\sin((t-s) \sqrt{- \Delta})}{\sqrt{- \Delta}} (1-\rho(D)) (u_1 \bar{u}_2 u_3)(s) ds \Big\|_{L_t^\infty([0,T],\dot{H}^{1/2}(\Rtwo))} \\
&\lesssim \| u_1 \bar{u}_2 u_3 \|_{L_t^1 L_x^2}\lesssim T^{1/12} \prod_{i=1}^3 \| u_i \|_{L_t^4([0,T], L^6(\Rtwo))}.
\end{split}
\end{equation*}
Choosing $\kappa > 0$ such that $T^\kappa$ dominates the powers of $T$ obtained in the above estimates finishes the proof of \eqref{eq:NonlinearEstimateII}.
\end{proof}

We remark that there is slack in the spatial regularity in the nonlinear argument. This can be translated to solve the quintic nonlinear wave equation
\begin{equation}
\label{eq:QuinticNLW}
%\left\{ \begin{array}{cl}
\begin{cases}
\partial_t^2 u - \Delta_{x} u = \pm |u|^4 u, \quad (t,x) \in \R \times \R^2, \\
u(0) = f \in \mathcal{B}^\varepsilon_{6,2,2}(\R^2) + \dot{H}^{1/2}(\R^2), \quad \dot{u}(0) = g \in \mathcal{B}^{\varepsilon-1}_{6,2,2}(\R^2) + \dot{H}^{-1/2}(\R^2)
\end{cases}
%\end{array} \right.
\end{equation}
in the solution space $S_T = L_t^6([0,T], L_x^6) \cap C([0,T], \mathcal{B}^\varepsilon_{6,2,2}(\R^2) + \dot{H}^{1/2}(\R^2))$ for small initial data. The crucial nonlinear estimate reads
\begin{equation*}
\begin{split}
&\; \Big\| \int_0^t \frac{\sin((t-s) \sqrt{- \Delta})}{\sqrt{-\Delta}} \prod_{i=1}^5 u_i(s) ds \Big\|_{L_t^6([0,T],L^6(\R^2))} \\
&\lesssim \Big\| \prod_{i=1}^5 u_i \Big\|_{L_t^{6/5}([0,T], L^{6/5}(\Rtwo))} \lesssim \prod_{i=1}^5 \| u_i \|_{L_t^6([0,T], L^6(\Rtwo))}
\end{split}
\end{equation*}
with Strichartz pairs $(p_i,q_i) = (6,6)$, $i=1,2$, because inhomogeneous Strichartz pairs as in Theorem \ref{thm:StrichartzEstimates} lose exactly one derivative. Note that we cannot afford to apply H\"older's inequality in time anymore. Hence, this argument does not allow to prove well-posedness for large initial data. This is not surprising because \eqref{eq:QuinticNLW} is $\dot{H}^{1/2}(\R^2) \times \dot{H}^{-1/2}(\R^2)$-scaling critical. Easy variants of the above arguments yield the following theorem.

\begin{theorem}
\label{thm:LWPQuinticNLW}
For any $T > 0$, there is an $\varepsilon > 0$ such that \eqref{eq:QuinticNLW} is analytically locally well posed with $u \in S_T = L^6([0,T],L^6(\R^2)) \cap C([0,T],\mathcal{B}^\varepsilon_{6,2,2} + \dot{H}^{1/2}(\R^2))$  provided that
\begin{equation*}
\| f \|_{\mathcal{B}^\varepsilon_{6,2,2}(\R^2) + \dot{H}^{1/2}(\R^2)} + \| g \|_{\mathcal{B}^{\varepsilon-1}_{6,2,2} + \dot{H}^{-1/2}} \leq \varepsilon.
\end{equation*}
\end{theorem}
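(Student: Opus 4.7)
The plan is to verify the two quantitative well-posedness bounds \eqref{eq:LinearAbstractEstimate} and \eqref{eq:NonLinearAbstractEstimate}, with the trilinear map $N_3$ replaced by the quintic analogue
\[
N_5(u_1,\ldots,u_5)(t):=\pm\int_0^t\frac{\sin((t-s)\sqrt{-\Delta})}{\sqrt{-\Delta}}(u_1\overline{u_2}u_3\overline{u_4}u_5)(s)\,ds,
\]
for the initial-data and solution spaces in the statement; by Section \ref{subsec:prelim}, these bounds then yield analytic well-posedness. The key structural difference from Theorem \ref{thm:LWPNLW} is that \eqref{eq:QuinticNLW} is $\dot{H}^{1/2}(\Rtwo)\times\dot{H}^{-1/2}(\Rtwo)$-scaling critical, so the main $L^{6}_{t}L^{6}_{x}$-component of the quintic estimate will carry no factor $T^{\delta}$; this is precisely the reason why $T$ must be compensated by smallness of the data.

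For the linear bound I would argue exactly as in the $p=6$ portion of the proof of Theorem \ref{thm:LWPNLW}. Since $\overline{s}(6)=0$ in dimension two, Theorem \ref{thm:LocalSmoothingHardy} at $p=6$ handles the $\mathcal{B}^{\varepsilon}_{6,2,2}(\Rtwo)$-piece of the initial data, while the $(p,q)=(6,6)$ Strichartz estimate in Theorem \ref{thm:StrichartzEstimates} (for which $s=1/2$) handles the $\dot{H}^{1/2}(\Rtwo)$-piece. The low frequencies of $\sin(t\sqrt{-\Delta})/\sqrt{-\Delta}$ acting on $g$ are controlled as in \eqref{eq:Mikhlin} and \eqref{eq:lowfreqadapted} via Mikhlin's theorem together with Proposition \ref{prop:discrete} and the Sobolev embeddings of Propositions \ref{prop:Sobolev}--\ref{prop:Sobolev2}. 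The $C([0,T];\mathcal{B}^{\varepsilon}_{6,2,2}(\Rtwo)+\dot{H}^{1/2}(\Rtwo))$-component follows from Proposition \ref{prop:PolynomialGrowth} for the adapted Besov factor and from unitarity of the half-wave group on $\dot{H}^{1/2}(\Rtwo)$ for the Sobolev factor, strong continuity in $t$ being automatic.

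The heart of the proof is the quintic nonlinear estimate. For the $L^{6}_{t}L^{6}_{x}$-component I would apply Theorem \ref{thm:StrichartzEstimates} with the pair $(p_{2},q_{2})=(6,6)$, $s_{2}=1/2$: at high frequencies the one-derivative loss of the inhomogeneous Strichartz pair is precisely cancelled by the smoothing factor $1/\sqrt{-\Delta}$ in the Duhamel formula, while at low frequencies Mikhlin's theorem combined with a Sobolev embedding is used exactly as in the proof of Theorem \ref{thm:LWPNLW}. This reduces matters to the pure H\"{o}lder inequality
\[
\Big\|\prod_{i=1}^{5}u_{i}\Big\|_{L^{6/5}_{t}([0,T],L^{6/5}(\Rtwo))}\leq\prod_{i=1}^{5}\|u_{i}\|_{L^{6}_{t}([0,T],L^{6}(\Rtwo))},
\]
which carries no power of $T$. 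For the $C([0,T];\dot{H}^{1/2}(\Rtwo))$-component I would split into frequency regimes: the high frequencies of the nonlinearity are placed in $\dot{H}^{1/2}(\Rtwo)$ via Minkowski's inequality and a Sobolev embedding, the factor $1/\sqrt{-\Delta}$ removing one derivative from the output, while the low frequencies are treated by Plancherel, as in the proof of Theorem \ref{thm:LWPNLW}; here H\"{o}lder in time produces a small positive power of $T$.

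The principal obstacle is the criticality itself: since the $L^{6}_{t}L^{6}_{x}$-part of the nonlinear bound admits no $T^{\delta}$-gain, contraction of the Duhamel map can only be forced by smallness of $\|(f,g)\|_{X\times Y}$, with the admissible threshold $\varepsilon=\varepsilon(T)$ determined by the $T$-dependent constants arising in the linear bound and in the $C([0,T];\dot{H}^{1/2})$-component of the nonlinear bound. Once the critical versions of \eqref{eq:LinearAbstractEstimate} and \eqref{eq:NonLinearAbstractEstimate} are in hand, the abstract fixed-point framework of Section \ref{subsec:prelim} delivers the stated analytic local well-posedness on $[0,T]$.
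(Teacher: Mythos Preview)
Your proposal is correct and follows essentially the same route as the paper: the linear estimate is handled exactly as in the $p=6$ part of the proof of Theorem~\ref{thm:LWPNLW}, and the crucial quintic nonlinear estimate in $L^{6}_{t}L^{6}_{x}$ is obtained via the inhomogeneous Strichartz estimate with both pairs $(p_{i},q_{i})=(6,6)$, whose one-derivative loss is exactly absorbed by the smoothing factor $1/\sqrt{-\Delta}$, followed by H\"older's inequality in $L^{6/5}_{t}L^{6/5}_{x}$ with no gain in $T$. Your identification of the scaling criticality as the reason the result is restricted to small data, with $\varepsilon=\varepsilon(T)$, also matches the paper's discussion verbatim.
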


\subsection{Results for slower decaying initial data}\label{subsec:slower}

In the following we point out how considering higher Picard iterates allows us to construct solutions for very slowly decaying initial data. The arguments are similar to \cite{Schippa2022} and \cite{DodsonSofferSpencer2021}, albeit with the difference that the Duhamel integral has a stronger smoothing effect. We consider the cubic nonlinear wave equation in $d$ dimensions:
\[
\left\{\!\begin{array}{cl}
\partial_t^2 u - \Delta_{x} u &= \pm |u|^2 u, \quad (t,x) \in \R \times \R^d, \; d \geq 2,\\
u(0) &= f_1 \in X, \quad \dot{u}(0) = f_2 \in Y,
\end{array} \right.
\]
although our main results concern $d\in\{2,3\}$. We write the solution abstractly:
\begin{equation*}
u= L(f_1,f_2) + N_3(u,u,u),
\end{equation*}
as in Section \ref{subsec:prelim}.

For $d,n\geq 2$, we consider initial data in $L^{4n+2}$-based spaces, and we let
\[
%\begin{equation}
%\label{eq:HigherPicardIterates}
\begin{split}
u^0(t) &= L(f_1,f_2), \\
u^1(t) &= N_3(u^0,u^0,u^0),\\
u^j(t) &= N_3\Big( \sum_{k=0}^{j-1} u^k, \sum_{k=0}^{j-1} u^k, \sum_{k=0}^{j-1} u^k \Big) - \sum_{k=1}^{j-1} u^k, \quad (j \geq 2).
\end{split}
%\end{equation}
\]
We will prove the existence of a 
\[
v \in S^0([-1,1] \times \R^d) := L_t^\infty([-1,1];L^{2}_{x}(\R^d))\cap L_t^4([-1,1], L_x^\infty(\R^d))
\]
 which solves
\begin{equation*}
v = u - \sum_{j=0}^{n-1} u^j.
\end{equation*}
We can rewrite this as 
\begin{equation}
\label{eq:DifferenceSolution}
v = N_3(u,u,u) - \sum_{j=1}^{n-1} u^j = N_3\Big(v + \sum_{j=0}^{n-1} u^j, v+ \sum_{j=0}^{n-1} u^j, v + \sum_{j=0}^{n-1} u^j\Big)-\sum_{j=1}^{n-1} u^j
\end{equation}
for $j \geq 2$. One can check that $u^j$ contains only terms $A_k$ with $k \geq 2j+1$, where $A_{k}$ is as in \eqref{eq:Am} (cf. \cite{Schippa2022,DodsonSofferSpencer2021}). We therefore obtain estimates for such terms.

\vanish{

\begin{lemma}
\label{lem:IterationAm}
Let $d \in \{2, 3 \}$, $n \geq 2$ and $s >(d-1)/4$. Then there exists a $C\geq0$ such that
\[
%\begin{equation}
%\label{eq:AmEstimate}
\| A_m(f_{1},f_{2}) \|_{L_t^\infty([-1,1];L^{(4n+2)/m}(\R^d)) } \leq C\big( \| f_{1} \|^m_{\Ba^{s}_{4n+2,2,2}(\R^d)} + \| f_{2} \|^m_{\Ba^{s-1}_{4n+2,2,2}(\R^d)}\big).
%\end{equation}
\]
for all $m \in \{1,\ldots, 2n-1\}$, $f_{1}\in \Ba^{s}_{4n+2,2,2}(\R^d)$ and $f_{2}\in \Ba^{s-1}_{4n+2,2,2}(\R^d)$.
\end{lemma}
\begin{proof}
First note that $A_{m}=0$ if $m$ is even. Hence we may suppose that $m = 2k+1$ for some $k\in\N_{0}$. Let $\veps>0$ and set $p:=(4n+2)/m$. Then \eqref{eq:EmbeddingLargep} and \eqref{eq:SobolevBesov6} yield
\begin{align*}
\| A_m (f_1,f_2) \|_{L_t^\infty L^{p}_{x}} &\lesssim \| A_m (f_1,f_2) \|_{L_t^\infty \mathcal{B}^{s(p)}_{p,p',p'}} \lesssim \| A_m(f_1,f_2) \|_{L_t^\infty \mathcal{B}^{(d-1)/4}_{p,1,1}}\\
&\lesssim \| A_m(f_1,f_2) \|_{L_t^\infty \mathcal{B}^{s(1)+\veps}_{p,1,1}},
\end{align*}
since $s(1)=(d-1)/4$. We can use this regularity to iterate the Duhamel integral in adapted spaces, by Lemma \ref{lem:TrilinearEstimate} and because $d \in \{2,3\}$. First, we split the Duhamel integral into low and high frequencies:
\begin{equation*}
\begin{split}
&\quad \big\| \int_0^t \frac{e^{i(t-s) \sqrt{- \Delta}}}{\sqrt{- \Delta}} (u_1 u_2 u_3) (s) ds \big\|_{L_t^\infty \mathcal{B}^{s(1)+\varepsilon}_{q,1,1}} \\
&\leq \| \rho(D) \int_0^t \frac{e^{i(t-s) \sqrt{-\Delta}}}{\sqrt{- \Delta}} (u_1 u_2 u_3) (s) ds \big\|_{L_t^\infty L_x^q} \\
&\quad + \| (1-\rho(D)) \int_0^t e^{i(t-s) \sqrt{-\Delta}} (u_1 u_2 u_3)(s) ds \|_{L_t^\infty \mathcal{B}^{s(1)-1+\varepsilon}_{q,1,1}}.
\end{split}
\end{equation*}
The low frequencies are estimated by Mikhlin's theorem:
\begin{equation*}
\begin{split}
\| \rho(D) \int_0^t \frac{e^{i(t-s) \sqrt{-\Delta}}}{\sqrt{-\Delta}} (u_1 u_2 u_3)(s) ds \|_{L^q} &\lesssim T \| u_1 u_2 u_3 \|_{L_t^\infty L_x^q} \\
&\lesssim T \prod_{i=1}^3 \| u_i \|_{L_t^\infty L_x^{3q}} \\
&\lesssim T \prod_{i=1}^3 \| u_i \|_{L_t^\infty \mathcal{B}^{s(1)+\varepsilon}_{3q,1,1}},
\end{split}
\end{equation*}
which allows us to iterate. Moreover, for the high frequencies we use Lemma \ref{lem:TrilinearEstimate} and boundedness of $e^{it \sqrt{-\Delta}}$ on adapted spaces. Furthermore, we apply \eqref{eq:SobolevBesov4} and \eqref{eq:SobolevBesov7}, to obtain
\begin{equation*}
\begin{split}
\| A_m (f_1,f_2) \|_{L_t^\infty \mathcal{B}^{s(1)+\veps}_{p,1,1}} &\lesssim \| f_1 \|^m_{\mathcal{B}^{s(1)+\veps}_{4n+2,1,1}} + \| f_2 \|^m_{\mathcal{B}^{s(1)+\veps-1}_{4n+2,1,1}} \\
&\lesssim \| f_1 \|^m_{\mathcal{B}^{s(1)+2\veps}_{4n+2,2,2}} + \| f_2 \|^m_{\mathcal{B}^{s(1)+2\veps-1}_{4n+2,2,2}}.
\end{split}
\end{equation*}
By choosing $\veps$ sufficiently small, this concludes the proof.
\end{proof}

By combining this lemma with our observation that each $u^j$ contains only terms $A_k$ with $k \geq 2j+1$, we obtain the following result. 

\begin{lemma}
Let $d \in \{ 2, 3 \}$, $n \geq 2$ and $0 \leq j \leq n-1$. Then for each $\varepsilon >0$ there is an $\varepsilon_n \leq 1$ such that
\begin{equation*}
\| u^j \|_{L^\infty_{t,x}([0,1] \times \R^d)} + \| u^j \|_{L_t^\infty L^{\frac{4n+2}{2j+1}}([0,1] \times \R^d)} \lesssim \| f \|_{L^{4n+2}_{\alpha + \varepsilon}(\R^d)}
\end{equation*}
holds true provided that $\| f \|_{L^{4n+2}_{\alpha + \varepsilon}} \leq \varepsilon_n$, and
\begin{equation*}
\| u^j \|_{L^\infty_{t,x}([0,1] \times \R^d)} + \| u^j \|_{L^\infty_t L^{\frac{4n+2}{2j+1}}([0,1] \times \R^d)} \lesssim \| f \|_{\mathcal{B}^{\tilde{\alpha}+\varepsilon}_{4n+2,2}(\R^d)}
\end{equation*}
provided that $\| f \|_{\mathcal{B}^{\tilde{\alpha}+\varepsilon}_{4n+2,2}(\R^d)} \leq \tilde{\varepsilon}_n$.
\end{lemma}

With the estimate for the higher Picard iterates at hand, the following proposition is proved like in \cite[Proposition~4.6]{Schippa2022}.

\begin{proposition}
\label{prop:ExistenceDifferenceSolution}
Let $d \in \{2,3\}$, $\varepsilon > 0$, $n \geq 2$, and $\varepsilon_n$, $\tilde{\varepsilon}_n \leq 1$ like in Lemma \ref{lem:IterationAm}. Then, there is a unique $v \in S^0$, which solves \eqref{eq:DifferenceSolution} with $v(0) = \dot{v}(0) = 0$.
\end{proposition}
This yields the following theorem on local well-posedness for slowly decaying initial data. We focus on the two-dimensional case with small data to simplify the Strichartz space, but there are clearly analogs available in higher dimensions.
\begin{theorem}
\label{thm:SlowlyDecayingData}
Let $d=2$, $\varepsilon > 0$, $n \geq 2$, and $(f_1,f_2)$, $\varepsilon_n$, and $\tilde{\varepsilon}_n$ like in Proposition \ref{prop:ExistenceDifferenceSolution}. Let $\frac{2}{p} + \frac{1}{4n+2} = \frac{1}{2}$. Then, there is $u \in L_t^p([0,1],L^{4n+2}(\R^2))$ which solves \eqref{eq:CubicNLW}. Furthermore, for 
\begin{align*}
\| (f_1,f_2) \|_{L^{4n+2}_{\alpha+\varepsilon} \times L^{4n+2}_{\alpha + \varepsilon -1}} + \| (g_1,g_2) \|_{L^{4n+2}_{\alpha+\varepsilon} \times L^{4n+2}_{\alpha + \varepsilon -1}} &\leq \varepsilon_n \\
\text{ or } 
\| (f_1,f_2) \|_{\mathcal{B}^{\tilde{\alpha}+\varepsilon}_{4n+2,2} \times \mathcal{B}^{\tilde{\alpha}+\varepsilon}_{4n+2,2}} + \| (g_1,g_2) \|_{\mathcal{B}^{\tilde{\alpha}-1+\varepsilon}_{4n+2,2} \times \mathcal{B}^{\tilde{\alpha}-1+\varepsilon}_{4n+2,2}} &\leq \tilde{\varepsilon}_n
\end{align*}
we have for the corresponding solutions $\| u_1 - u_2 \|_{L^p([0,1],L^{4n+2})} \to 0$ provided that the initial data are converging in the spaces of initial data.
\end{theorem}
}

\begin{lemma}
\label{lem:IterationAm}
Let $d \in \{2, 3 \}$, $n \geq 2$ and $s > \frac{d-1}{2}( 1 - \frac{1}{4n+2})$. Then there exists a $C\geq0$ such that
\[
%\begin{equation}
%\label{eq:AmEstimate}
\| A_m(f_{1},f_{2}) \|_{L_t^\infty([-1,1];L^{\frac{4n+2}{m}}(\R^d)) } \leq C\big( \| f_{1} \|^m_{W^{s,4n+2}(\R^d)} + \| f_{2} \|^m_{W^{s-1,4n+2}(\R^d)}\big).
%\end{equation}
\]
for all $m \in \{1,\ldots, 2n-1\}$, $f_{1}\in W^{s,4n+2}(\Rd)$ and $f_{2}\in W^{s-1,4n+2}(\Rd)$.
\end{lemma}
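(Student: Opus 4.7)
The plan is induction on $m$, using as a starting observation that $A_m \equiv 0$ for every even $m$: among triples $(m_1,m_2,m_3)$ with $m_1+m_2+m_3 = m$ and $m_i \geq 1$, an even total forces at least one $m_i$ to be even, and $A_{m_i}$ vanishes for even $m_i$ by induction. So only odd $m \in \{1,3,\dots,2n-1\}$ need to be treated.

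For the base case $m=1$, I would apply the Peral--Miyachi fixed-time estimate \eqref{eq:fixed} to $A_1(t) = \cos(t\sqrt{-\Delta})f_1 + \tfrac{\sin(t\sqrt{-\Delta})}{\sqrt{-\Delta}} f_2$ on $L^{4n+2}(\R^d)$, picking up the loss $2s(4n+2)$ at high frequencies and dispatching the low frequencies by Mikhlin (uniformly in $|t|\leq 1$). The hypothesis $s > \tfrac{d-1}{2}(1-\tfrac{1}{4n+2})$ comfortably exceeds $2s(4n+2) = (d-1)(\tfrac{1}{2}-\tfrac{1}{4n+2})$, and the extra derivative extracted from $1/\sqrt{-\Delta}$ in the second term lets $f_2$ be taken in $W^{s-1,4n+2}(\R^d)$.

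For the inductive step, fix odd $m \geq 3$ and expand $A_m$ via \eqref{eq:Am} into a finite sum of trilinear Duhamel expressions
\[
\int_0^t \frac{\sin((t-s)\sqrt{-\Delta})}{\sqrt{-\Delta}}\,A_{m_1}(s)\,\overline{A_{m_2}(s)}\,A_{m_3}(s)\,ds,
\]
with $m_1+m_2+m_3=m$ and each $m_i$ odd and strictly smaller than $m$. H\"older's inequality in space, together with $m/(4n+2) = \sum_i m_i/(4n+2)$, controls the trilinear product pointwise in $s$ by $\prod_{i=1}^3 \|A_{m_i}(s)\|_{L^{(4n+2)/m_i}(\R^d)}$. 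The key fact is a fixed-time bound for the Duhamel kernel on $L^{(4n+2)/m}(\R^d)$, uniformly in $|t|\leq 1$: at high frequencies the one derivative gained from $1/\sqrt{-\Delta}$ absorbs the Peral--Miyachi loss, provided $2s((4n+2)/m)\leq 1$. This inequality holds for $d\in\{2,3\}$ and every $m\leq 2n-1$, since $s(p)$ is monotone on $[2,\infty)$ and $(4n+2)/m \in [2, 4n+2]$; the low frequencies are again handled by Mikhlin. Integrating in $s\in[-1,1]$ and invoking the induction hypothesis on each factor closes the recursion
\[
\|A_m\|_{L^\infty_t L^{(4n+2)/m}} \lesssim \prod_{i=1}^3 \|A_{m_i}\|_{L^\infty_t L^{(4n+2)/m_i}} \lesssim \bigl(\|f_1\|_{W^{s,4n+2}} + \|f_2\|_{W^{s-1,4n+2}}\bigr)^m.
\]

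The main obstacle is not the individual estimates, which are standard, but coordinating the H\"older/Duhamel accounting along the recursion. The restriction to $d \in \{2,3\}$ and $m\leq 2n-1$ is precisely what guarantees $2s((4n+2)/m)\leq 1$, so that the Duhamel step does not bleed off derivatives. The modest overshoot of the hypothesis beyond $2s(4n+2)$, namely the additional $(d-1)/(2(4n+2))$, is not used in the sketch above, but it is exactly enough regularity to embed $W^{s,4n+2}(\R^d)$ into $\Ba^{(d-1)/4 + \veps}_{4n+2,4n+2,4n+2}(\R^d)$ via Proposition \ref{prop:Sobolev}, thereby opening a parallel route through the trilinear estimate in Lemma \ref{lem:TrilinearEstimate} and the half-wave invariance of the adapted Besov spaces.
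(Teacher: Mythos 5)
Your proof is correct and takes a genuinely different route from the paper's. The paper first embeds the target Lebesgue space into an adapted Besov space, using \eqref{eq:EmbeddingLargep} and \eqref{eq:SobolevBesov6} to pass to $\mathcal{B}^{s(1)+\varepsilon}_{q,1,1}(\R^d)$, and then iterates the Duhamel integral entirely inside the adapted scale by combining the half-wave invariance of these spaces with the trilinear product estimate of Lemma~\ref{lem:TrilinearEstimate}; the restriction $d\in\{2,3\}$ enters there because Lemma~\ref{lem:TrilinearEstimate} needs $s(1)>\frac{3(d-1)}{4}-1$. You stay in Lebesgue spaces throughout, using only H\"older for the trilinear product and the observation that $\frac{\sin(\tau\sqrt{-\Delta})}{\sqrt{-\Delta}}$ is bounded on $L^{(4n+2)/m}(\R^d)$ once $2s((4n+2)/m)\leq 1$, with the low frequencies handled by Mikhlin; here $d\in\{2,3\}$ enters through that derivative count instead. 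Both routes close the recursion, and your argument is in fact more elementary (no adapted spaces, no paraproducts) and only uses $s>2s(4n+2)$, which is slightly weaker than the stated hypothesis $s>s(1)+s(4n+2)$; as you note, the overshoot $\frac{d-1}{2(4n+2)}$ is precisely what the paper's embedding into $\mathcal{B}^{(d-1)/4+\varepsilon}_{4n+2,4n+2,4n+2}(\R^d)$ costs. What the paper's detour buys is uniformity with the rest of Section~\ref{subsec:slower}: the same iteration, carried out with $q=\infty$, immediately yields the $L^\infty_t L^\infty_x$ bound for $A_m$ needed just after the lemma, and the same chain of embeddings produces the companion estimate with adapted Besov data $\mathcal{B}^{\tilde\alpha+\varepsilon}_{4n+2,2,2}(\R^d)$; your purely Lebesgue-space recursion would require a separate argument (e.g.\ a Sobolev embedding into $L^\infty$) to recover those.
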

\begin{proof}
First note that $A_{m}=0$ if $m$ is even. Hence we may suppose that $m = 2k+1$ for some $k\in\N_{0}$. Let $\veps>0$ and set $p:=4n+2$ and $q:=(4n+2)/m$. Then, by the embeddings \eqref{eq:EmbeddingLargep} and \eqref{eq:SobolevBesov6}, one has
\begin{align*}
\| A_m (f_1,f_2) \|_{L_t^\infty L^{q}_{x}} &\lesssim \| A_m (f_1,f_2) \|_{L_t^\infty \mathcal{B}^{s(q)}_{q,q',q'}} \lesssim \| A_m(f_1,f_2) \|_{L_t^\infty \mathcal{B}^{(d-1)/4}_{q,1,1}}\\
&\lesssim \| A_m(f_1,f_2) \|_{L_t^\infty \mathcal{B}^{s(1)+\veps}_{q,1,1}},
\end{align*}
since $s(1)=(d-1)/4$. We can use this regularity to iterate the Duhamel integral in adapted spaces, by Lemma \ref{lem:TrilinearEstimate} and because $d \in \{2,3\}$. 
First, we split the Duhamel integral into low and high frequencies:
\begin{equation*}
\begin{split}
&\quad \big\| \int_0^t \frac{e^{i(t-s) \sqrt{- \Delta}}}{\sqrt{- \Delta}} (u_1 u_2 u_3) (s) ds \big\|_{L_t^\infty \mathcal{B}^{s(1)+\varepsilon}_{q,1,1}} \\
&\leq \| \rho(D) \int_0^t \frac{e^{i(t-s) \sqrt{-\Delta}}}{\sqrt{- \Delta}} (u_1 u_2 u_3) (s) ds \big\|_{L_t^\infty L_x^q} \\
&\quad + \| (1-\rho(D)) \int_0^t e^{i(t-s) \sqrt{-\Delta}} (u_1 u_2 u_3)(s) ds \|_{L_t^\infty \mathcal{B}^{s(1)-1+\varepsilon}_{q,1,1}}.
\end{split}
\end{equation*}
The low frequencies are estimated by Mikhlin's theorem:
\begin{equation*}
\begin{split}
&\quad \| \rho(D) \int_0^t \frac{e^{i(t-s) \sqrt{-\Delta}}}{\sqrt{-\Delta}} (u_1 u_2 u_3)(s) ds \|_{L^q} \\
 &\lesssim T \| u_1 u_2 u_3 \|_{L_t^\infty L_x^q} \lesssim T \prod_{i=1}^3 \| u_i \|_{L_t^\infty L_x^{3q}} \lesssim T \prod_{i=1}^3 \| u_i \|_{L_t^\infty \mathcal{B}^{s(1)+\varepsilon}_{3q,1,1}},
\end{split}
\end{equation*}
which allows for iteration. Moreover, for the high frequencies we use the boundedness of $e^{it \sqrt{-\Delta}}$ on the adapted Besov spaces, and iterate the trilinear estimate in Lemma \ref{lem:TrilinearEstimate} $k$ times, to obtain
\begin{equation*}
\begin{split}
\| A_m (f_1,f_2) \|_{L_t^\infty \mathcal{B}^{s(1)+\veps}_{q,1,1}} &\lesssim \| f_1 \|^m_{\mathcal{B}^{s(1)+\veps}_{4n+2,1,1}} + \| f_2 \|^m_{\mathcal{B}^{s(1)+\veps-1}_{4n+2,1,1}} \\
&\lesssim \| f_1 \|^m_{\mathcal{B}^{s(1)+2\veps}_{p,p,p}} + \| f_2 \|^m_{\mathcal{B}^{s(1)+2\veps-1}_{p,p,p}} \\
&\lesssim \| f_1 \|^m_{W^{s(1)+s(p)+3\veps,p}} + \| f_2 \|^m_{W^{s(1)+s(p)-1+3\veps,p}}.
\end{split}
\end{equation*}
Here we also used the embeddings \eqref{eq:SobolevBesov4}, \eqref{eq:SobolevBesov7}, \eqref{eq:SobolevBesov5} and \eqref{eq:Sobolevintro}.
By choosing $\veps$ sufficiently small, this concludes the proof.
\end{proof}

Similarly, we can iterate
\begin{equation*}
\| A_m(f_1,f_2) \|_{L_t^\infty L_x^\infty} \lesssim \| A_m (f_1,f_2) \|_{L_t^\infty \mathcal{B}^{\frac{d-1}{4}+\varepsilon}_{\infty,1,1}} \lesssim \| f_1 \|^m_{\mathcal{B}^{\frac{d-1}{4}+\varepsilon}_{\infty,1,1}} + \|f_2 \|^m_{\mathcal{B}^{\frac{d-1}{4}-1+\varepsilon}_{\infty,1,1}}
\end{equation*}
and
\begin{equation*}
\| f \|_{\mathcal{B}^{\frac{d-1}{4}+\varepsilon}_{\infty,1,1}} \lesssim \| f \|_{\mathcal{B}^{\frac{d-1}{4}+\frac{d+1}{2p}+\varepsilon}_{p,1,1}} \lesssim \| f \|_{\mathcal{B}^{\frac{d-1}{4}+\frac{d+1}{2p}+\varepsilon}_{p,p,p}} \lesssim \| f \|_{W^{s,p}}
\end{equation*}
for $s > \alpha$ with
\begin{equation}
\label{eq:RegularityLpSpaces}
\alpha := \frac{d-1}{4}+\frac{d+1}{2p} + \frac{d-1}{2} \big( \frac{1}{2} - \frac{1}{p} \big) = \frac{d-1}{2} \big( 1 - \frac{1}{p} \big) + \frac{d+1}{2p}.
\end{equation}
% $s \geq \frac{d-1}{4}+\frac{d+1}{2p} + \frac{d-1}{2} \big( \frac{1}{2} - \frac{1}{p} \big) = \frac{d-1}{2} \big( 1 - \frac{1}{4n+2} \big) + \frac{d+1}{2p}$.
This shows that
\begin{equation*}
\| A_m(f_1,f_2) \|_{L_t^\infty L_x^\infty} \lesssim \| f_1 \|^m_{W^{s,p}} + \| f_2 \|^m_{W^{s-1,p}}.
\end{equation*}
We can argue like above to find
\begin{equation*}
\| A_m(f_1,f_2) \|_{L_t^\infty L_x^\infty} \lesssim \| f_1 \|^m_{\mathcal{B}^{\frac{d-1}{4}+\varepsilon}_{\infty,1,1}} + \|f_2 \|^m_{\mathcal{B}^{\frac{d-1}{4}-1+\varepsilon}_{\infty,1,1}}.
\end{equation*}
Now we use the embeddings
\begin{equation*}
\| f \|_{\mathcal{B}^{s}_{\infty,1,1}} \lesssim \| f \|_{\mathcal{B}^{s+\frac{d+1}{2p}}_{p,1,1}} \lesssim \| f \|_{\mathcal{B}^{s+\frac{d+1}{2p}}_{p,2,2}}.
\end{equation*}
This shows that 
\begin{equation*}
\| A_m(f_1,f_2) \|_{L_t^\infty L_x^\infty} \lesssim \| f_1 \|^m_{\mathcal{B}^{s}_{p,2,2}} + \| f_2 \|^m_{\mathcal{B}^{s-1}_{p,2,2}}
\end{equation*}
for $s> \tilde{\alpha}$ with
\begin{equation}
\label{eq:RegularityAdaptedSpaces}
\tilde{\alpha} = \frac{d-1}{4} + \frac{d+1}{2p}.
\end{equation}
We have proved the following lemma, regarding the $u^{j}$ from above.
\begin{lemma}
Let $d \in \{ 2, 3 \}$, $n \geq 2$, $0 \leq j \leq n-1$, and $p = 4n+2$. Let $\alpha$ and $\tilde{\alpha}$ be given by \eqref{eq:RegularityLpSpaces} and \eqref{eq:RegularityAdaptedSpaces}. Then, for $\varepsilon >0$, there are $\varepsilon_n \leq 1$ and $\tilde{\varepsilon}_n \leq 1$ such that
\begin{equation*}
\| u^j \|_{L^\infty_{t,x}([0,1] \times \R^d)} + \| u^j \|_{L_t^\infty L^{\frac{4n+2}{2j+1}}([0,1] \times \R^d)} \lesssim \| f \|_{W^{\alpha+\veps,4n+2}(\R^d)}
\end{equation*}
holds true provided that $\| f \|_{W^{\alpha+\veps,4n+2}(\Rd)} \leq \varepsilon_n$, and
\begin{equation*}
\| u^j \|_{L^\infty_{t,x}([0,1] \times \R^d)} + \| u^j \|_{L^\infty_t L^{\frac{4n+2}{2j+1}}([0,1] \times \R^d)} \lesssim \| f \|_{\mathcal{B}^{\tilde{\alpha}+\varepsilon}_{4n+2,2,2}(\R^d)}
\end{equation*}
provided that $\| f \|_{\mathcal{B}^{\tilde{\alpha}+\varepsilon}_{4n+2,2,2}(\R^d)} \leq \tilde{\varepsilon}_n$.
\end{lemma}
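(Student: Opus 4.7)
The plan is to reduce the statement to the multilinear bounds on the Picard iterates $A_m$ already obtained in Lemma \ref{lem:IterationAm} and to the embedding chain spelled out in the paragraphs immediately preceding the lemma. The structural input, borrowed from \cite{Schippa2022,DodsonSofferSpencer2021} and recalled in the text, is that $u^0=A_1(f_1,f_2)$ and, for $j\geq 1$, each $u^j$ is a finite linear combination of iterates $A_k(f_1,f_2)$ with $2j+1\leq k\leq K_j$ for some combinatorial $K_j$ depending only on $j$. Hence it suffices to bound every relevant $A_k$ in the two target norms by $C^{k}(\|f_1\|_X+\|f_2\|_Y)^{k}$ and then sum over the finitely many contributing $A_k$, exploiting the smallness of the initial data to collapse all superlinear powers into a single copy of $\|(f_1,f_2)\|_{X\times Y}$.

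For the $L^\infty_{t,x}$ component, the work has essentially already been carried out. Concatenating
\[
\|A_k\|_{L^\infty_{t,x}}\lesssim \|A_k\|_{L^\infty_t\Ba^{(d-1)/4+\veps}_{\infty,1,1}}\lesssim \|f_1\|_{\Ba^{(d-1)/4+\veps}_{\infty,1,1}}^k+\|f_2\|_{\Ba^{(d-1)/4-1+\veps}_{\infty,1,1}}^k
\]
with the embeddings $W^{\alpha+\veps,p}\hookrightarrow\Ba^{(d-1)/4+\veps}_{\infty,1,1}$ and $\Ba^{\tilde{\alpha}+\veps}_{p,2,2}\hookrightarrow\Ba^{(d-1)/4+\veps}_{\infty,1,1}$ displayed in the excerpt yields the desired bound in both initial-data settings. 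The underlying iteration of Lemma \ref{lem:TrilinearEstimate} along the Duhamel tree does not depend on the value of $k$; only on the finiteness of that tree.

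For the $L^\infty_t L^{(4n+2)/(2j+1)}_x$ component, I distinguish two regimes. When $k=2j+1$, Lemma \ref{lem:IterationAm} applies verbatim since $2j+1\leq 2n-1$, and the required estimate follows directly. When $k>2j+1$, I apply log-convexity of Lebesgue norms to interpolate the $L^{(4n+2)/k}_x$-bound of Lemma \ref{lem:IterationAm} against the $L^\infty_x$-bound of the previous step,
\[
\|A_k\|_{L^{(4n+2)/(2j+1)}_x}\leq \|A_k\|_{L^{(4n+2)/k}_x}^{(2j+1)/k}\,\|A_k\|_{L^\infty_x}^{1-(2j+1)/k},
\]
which produces a bound of the form $C^{k}\|(f_1,f_2)\|_{X\times Y}^{k}$. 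For the finitely many terms with $k>4n+2$ that can appear in $u^j$ when $j$ is close to $n-1$ (where $L^{(4n+2)/k}_x$ ceases to make sense), the cleanest route is to rerun the Besov-space iteration underlying Lemma \ref{lem:IterationAm} with intermediate target $\Ba^{(d-1)/4+\veps}_{q,1,1}$ for a fixed $q\geq (4n+2)/(2j+1)$ rather than $q=(4n+2)/k$, applying Lemma \ref{lem:TrilinearEstimate} with non-symmetric Lebesgue exponents on the three factors of each trilinear operation; the trilinear iteration closes in the same way.

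Summation is then routine: each contribution is of the form $C^{k}\|(f_1,f_2)\|_{X\times Y}^{k}$, the number of contributing $k$ is at most $K_j<\infty$, and choosing $\veps_n,\tilde{\veps}_n\leq 1$ so small that $C\|(f_1,f_2)\|_{X\times Y}\leq 1/2$ collapses the resulting sum to a multiple of $\|(f_1,f_2)\|_{X\times Y}^{2j+1}\leq \|(f_1,f_2)\|_{X\times Y}$. The main obstacle I foresee is not conceptual but bookkeeping: one must verify the combinatorial bound $K_j<\infty$ and extend Lemma \ref{lem:IterationAm} to the higher multilinearity degrees that appear for $j$ close to $n-1$. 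The proof technique (trilinear iteration in adapted Besov spaces combined with the embeddings of Proposition \ref{prop:Sobolev2}) carries over without any new ingredients.
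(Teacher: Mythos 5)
Your proposal is correct and follows essentially the same route as the paper: decompose $u^j$ into the finitely many Picard tree terms of degree $k\geq 2j+1$, bound each one by iterating the trilinear estimate (Lemma~\ref{lem:TrilinearEstimate}) in the adapted Besov scale together with the invariance of these spaces under $e^{it\sqrt{-\Delta}}$, and then pass to the stated initial-data norms via the embeddings of Propositions~\ref{prop:Sobolev} and~\ref{prop:Sobolev2}. The paper's own treatment is terse --- it records Lemma~\ref{lem:IterationAm}, then the separate $L^\infty_{t,x}$ bound obtained by iterating in $\mathcal{B}^{(d-1)/4+\varepsilon}_{\infty,1,1}(\R^d)$, and then simply asserts the lemma --- so you are filling in genuine detail, in particular on how to get the $L^\infty_tL^{(4n+2)/(2j+1)}_x$ control on the higher-degree pieces of $u^j$.

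One small imprecision: your first mechanism for $k>2j+1$, interpolating against the $L^{(4n+2)/k}_x$ bound of Lemma~\ref{lem:IterationAm}, outruns the stated range of that lemma ($m\leq 2n-1$), and the trees in $u^j$ can have degree up to roughly $3^j$, which quickly exceeds $2n-1$. You flag this yourself at the end, and your second mechanism (rerun the iteration with a fixed target exponent $q\geq(4n+2)/(2j+1)$, allocating Lebesgue exponents asymmetrically in Lemma~\ref{lem:TrilinearEstimate} so that $2j+1$ leaves sit at $L^{4n+2}_x$ and the rest at $L^\infty_x$) handles all $k\geq 2j+1$ uniformly; it would in fact be cleanest to use only that mechanism throughout. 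With that adjustment the argument, including the final summation over the finitely many trees using the smallness of $\|(f_1,f_2)\|_{X\times Y}$, is complete and matches the paper's intent.
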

With the estimate for the higher Picard iterates at hand, the following proposition is proved like in \cite[Proposition~4.6]{Schippa2022}.

\begin{proposition}
\label{prop:ExistenceDifferenceSolution}
Let $d \in \{2,3\}$, $\varepsilon > 0$, $n \geq 2$, and $\varepsilon_n$, $\tilde{\varepsilon}_n \leq 1$ be as in Lemma \ref{lem:IterationAm}. Then, there is a unique $v \in S^0$, which solves \eqref{eq:DifferenceSolution} with $v(0) = \dot{v}(0) = 0$.
\end{proposition}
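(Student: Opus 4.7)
The plan is to construct $v$ as the unique fixed point in a small ball of $S^0$ of the map
\[
\Phi(v):=N_3\bigl(v+U_{n-1},\,v+U_{n-1},\,v+U_{n-1}\bigr)-\sum_{j=1}^{n-1}u^j,
\]
where $U_{n-1}:=\sum_{j=0}^{n-1}u^j$. A short induction on $j$ in the definition of $u^{j}$ yields the identity $\sum_{k=1}^{j}u^{k}=N_{3}(U_{j-1},U_{j-1},U_{j-1})$, and in particular
\[
\Phi(0)=N_3(U_{n-1},U_{n-1},U_{n-1})-N_3(U_{n-2},U_{n-2},U_{n-2}).
\]
Writing $U_{n-1}=U_{n-2}+u^{n-1}$ and expanding the difference by multilinearity, every surviving summand contains at least one factor $u^{n-1}$. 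Since $u^{n-1}$ consists of Picard iterates $A_m$ with $m\ge 2n-1$, each summand of $\Phi(0)$, expanded in Picard iterates, only involves $A_{m}$ with $m\ge 2n+1$. Lemma \ref{lem:IterationAm}, together with the $L^\infty_{t,x}$ bound for $A_m$ established in the discussion following it, then controls all such $A_m$ in $L^\infty_t L_x^{(4n+2)/m}\cap L^\infty_{t,x}$; H\"older in time and interpolation place them in $S^0=L^\infty_tL^2_x\cap L^4_tL^\infty_x$ with norm bounded by $\varepsilon_n^{\,2n+1}$, which is far smaller than $\varepsilon_n$.

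Next, I would verify the trilinear Strichartz-type bound
\[
\|N_3(w_1,w_2,w_3)\|_{S^0}\lesssim\prod_{i=1}^{3}\|w_i\|_{S^0}
\]
for $d\in\{2,3\}$ by applying H\"older to place two factors in $L^4_tL^\infty_x$ and the third in $L^\infty_tL^2_x$, so that $w_1\overline{w_2}w_3\in L^2_tL^2_x$, and then invoking the energy and inhomogeneous Strichartz estimates for the wave equation. Granting this, expansion by multilinearity of the map $(v,\tilde v)\mapsto \Phi(v)-\Phi(\tilde v)$ shows that every term carries a factor $v-\tilde v$ and two further factors drawn from $v,\tilde v,U_{n-1}$, so that for $v,\tilde v\in B_{S^0}(0,R)$
\[
\|\Phi(v)-\Phi(\tilde v)\|_{S^0}\lesssim (R+\|U_{n-1}\|_{S^0})^{2}\,\|v-\tilde v\|_{S^0},
\]
with $\|U_{n-1}\|_{S^0}$ itself bounded by $\varepsilon_n$ from the Picard estimates. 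Choosing $R$ of order $\|\Phi(0)\|_{S^0}$ and $\varepsilon_n$ sufficiently small makes $\Phi$ a $\tfrac12$-contraction on $B_{S^0}(0,R)$; the Banach fixed-point theorem then delivers the unique $v\in S^0$, and $v(0)=\dot v(0)=0$ is built into the Duhamel representation hidden in $N_3$.

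The main obstacle will be the trilinear estimate in $S^0$, particularly at the $L^4_tL^\infty_x$ component in two space dimensions, since the free-wave $L^4_tL^\infty_x$ Strichartz estimate lies exactly at (and is well known to fail at) the critical endpoint in $d=2$. I expect to handle this either by replacing $L^\infty_x$ by $L^q_x$ for $q<\infty$ sufficiently large and recovering the missing $L^\infty_x$ control from the $L^\infty_tL^2_x$ component via a dyadic decomposition and Sobolev embedding, or by the atomic/square-function device used in \cite{DodsonSofferSpencer2021,Schippa2022}. Once the trilinear estimate is secured, the remainder of the argument is a standard contraction-mapping scheme modelled on \cite[Proposition~4.6]{Schippa2022}, and uniqueness in $S^{0}$ with vanishing Cauchy data is an immediate consequence.
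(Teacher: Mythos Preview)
Your overall contraction-mapping strategy and your computation of $\Phi(0)$ are correct and match the approach of \cite[Proposition~4.6]{Schippa2022}, to which the paper simply defers. However, there is a genuine gap in your Lipschitz estimate: you claim that $\|U_{n-1}\|_{S^0}\lesssim\varepsilon_n$, but this is false. The entire point of the construction is that the initial data lie in $L^{4n+2}$-based spaces and are \emph{not} in $L^2$, so $u^0=L(f_1,f_2)\notin L^\infty_tL^2_x$ and hence $U_{n-1}\notin S^0$. The trilinear bound $\|N_3(w_1,w_2,w_3)\|_{S^0}\lesssim\prod_i\|w_i\|_{S^0}$ therefore cannot be applied to the cross terms containing $U_{n-1}$-factors, and the displayed Lipschitz inequality with $(R+\|U_{n-1}\|_{S^0})^2$ is meaningless as written.

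The fix, which is what \cite{Schippa2022} actually does, is to estimate the $U_{n-1}$-factors in $L^\infty_{t,x}$ (and, where needed, the individual $u^j$ in $L^\infty_tL^{(4n+2)/(2j+1)}_x$), bounds you have already recorded from Lemma~\ref{lem:IterationAm} and the discussion following it. For instance, $\|v\,\overline{U_{n-1}}\,U_{n-1}\|_{L^1_tL^2_x}\lesssim \|v\|_{L^\infty_tL^2_x}\|U_{n-1}\|_{L^\infty_{t,x}}^2$, after which the energy and inhomogeneous Strichartz estimates control $N_3(v,U_{n-1},U_{n-1})$ in $S^0$; the other mixed terms are handled analogously. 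The correct Lipschitz constant is then of order $(R+\|U_{n-1}\|_{L^\infty_{t,x}})^2\lesssim (R+\varepsilon_n)^2$, and the contraction closes for $\varepsilon_n$ (respectively $\tilde\varepsilon_n$) small. Once you replace the $S^0$-norm of $U_{n-1}$ by its $L^\infty_{t,x}$-norm throughout, your scheme goes through, and the endpoint concern you raise affects only the pure $N_3(v,v,v)$ term, to be handled as you indicate.
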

This yields the following theorem on local well-posedness for slowly decaying initial data. We focus on the two-dimensional case with small data to simplify the Strichartz space, but there are clearly analogs available in higher dimensions.
\begin{theorem}
\label{thm:SlowlyDecayingData}
Let $d=2$, $\varepsilon > 0$, $n \geq 2$, and $(f_1,f_2)$, $\varepsilon_n$, and $\tilde{\varepsilon}_n$ like in Proposition \ref{prop:ExistenceDifferenceSolution}. Let $\frac{2}{p} + \frac{1}{4n+2} = \frac{1}{2}$. Then, there is $u \in L_t^p([0,1],L^{4n+2}(\R^2))$ which solves \eqref{eq:CubicNLW}. Furthermore, for 
\begin{align*}
\| (f_1,f_2) \|_{W^{\alpha+\veps,4n+2} \times W^{\alpha+\veps-1,4n+2}} + \| (g_1,g_2) \|_{W^{\alpha+\veps,4n+2} \times W^{\alpha+\veps-1,4n+2}} &\leq \varepsilon_n \\
\text{ or } 
\| (f_1,f_2) \|_{\mathcal{B}^{\tilde{\alpha}+\varepsilon}_{4n+2,2,2} \times \mathcal{B}^{\tilde{\alpha}+\varepsilon}_{4n+2,2,2}} + \| (g_1,g_2) \|_{\mathcal{B}^{\tilde{\alpha}-1+\varepsilon}_{4n+2,2,2} \times \mathcal{B}^{\tilde{\alpha}-1+\varepsilon}_{4n+2,2,2}} &\leq \tilde{\varepsilon}_n,
\end{align*}
the corresponding solutions satisfy $\| u_1 - u_2 \|_{L^p([0,1],L^{4n+2})} \to 0$ provided that the initial data converges in the spaces of initial data.
\end{theorem}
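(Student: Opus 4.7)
The plan is to construct $u$ via the decomposition $u := \sum_{j=0}^{n-1} u^{j} + v$, where $v \in S^{0}$ is furnished by Proposition \ref{prop:ExistenceDifferenceSolution} under the smallness assumption, and then to bound each summand separately in $L_{t}^{p}([0,1], L^{4n+2}(\R^{2}))$. That $u$ solves \eqref{eq:CubicNLW} with the prescribed initial data is automatic from the definition of the Picard iterates $u^{j}$ together with the equation \eqref{eq:DifferenceSolution} satisfied by $v$.

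For the iterates $u^{j}$ with $0 \leq j \leq n-1$, the lemma stated just before Proposition \ref{prop:ExistenceDifferenceSolution} yields the uniform-in-time bounds $u^{j} \in L_{t}^{\infty}([0,1]; L_{x}^{(4n+2)/(2j+1)}(\R^{2})) \cap L_{t,x}^{\infty}([0,1] \times \R^{2})$, controlled by the initial-data norm, in both the $W^{\alpha+\veps,4n+2}$ and $\Ba^{\tilde{\alpha}+\veps}_{4n+2,2,2}$ scales. Spatial H\"older interpolation between these two endpoints with parameter $\theta = \frac{1}{2j+1}$ places $u^{j}$ in $L_{t}^{\infty}L_{x}^{4n+2}$, which embeds into $L_{t}^{p}L_{x}^{4n+2}$ on the bounded interval $[0,1]$.

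For the remainder $v \in S^{0} = L_{t}^{\infty}L_{x}^{2} \cap L_{t}^{4}L_{x}^{\infty}$, H\"older interpolation with parameter $\theta = \frac{2n}{2n+1}$ gives $\frac{1-\theta}{2} = \frac{1}{4n+2}$ and $\frac{\theta}{4} = \frac{n}{4n+2}$, which is precisely the admissibility relation $\frac{2}{p} + \frac{1}{4n+2} = \frac{1}{2}$ appearing in the statement; thus $v \in L_{t}^{p}L_{x}^{4n+2}$, and summation yields $u \in L_{t}^{p}L_{x}^{4n+2}$ as required.

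For the continuous-dependence claim, I would decompose $u_{1} - u_{2} = \sum_{j=0}^{n-1}(u_{1}^{j}-u_{2}^{j}) + (v_{1}-v_{2})$. Each $u_{1}^{j} - u_{2}^{j}$ is multilinear in the initial data, vanishes when the two data sets coincide, and can be estimated in $L_{t}^{\infty}L_{x}^{4n+2}$ by repeating the multilinear argument from the preceding lemma with one factor replaced by the difference, so that it goes to zero in $L_{t}^{p}L_{x}^{4n+2}$ as the data converge. The difference $v_{1}-v_{2}$ satisfies a perturbation of \eqref{eq:DifferenceSolution} driven by these already-controlled differences, and is controlled in $S^{0}$ via the same contraction behind Proposition \ref{prop:ExistenceDifferenceSolution}, then transferred to $L_{t}^{p}L_{x}^{4n+2}$ by the interpolation above. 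The main obstacle is bookkeeping: one has to verify that the interpolation exponents for the $u^{j}$ and the admissibility relation for $v$ line up consistently with the statement's condition $\frac{2}{p} + \frac{1}{4n+2} = \frac{1}{2}$, and that the perturbed difference equation for $v_{1}-v_{2}$ remains within the contraction regime of Proposition \ref{prop:ExistenceDifferenceSolution}. No genuinely new linear or multilinear estimate beyond those already proved is required.
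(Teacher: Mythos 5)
Your proposal is correct and follows the same route the paper takes (which it leaves largely implicit, deferring to Proposition \ref{prop:ExistenceDifferenceSolution} and the lemma before it): write $u=\sum_{j=0}^{n-1}u^j+v$, bound each $u^j$ via the preceding lemma, bound $v$ via its membership in $S^0$, and put everything in $L^p_t L^{4n+2}_x$ by H\"older/interpolation. The interpolation bookkeeping you carry out (the exponent $\theta=\tfrac{1}{2j+1}$ for $u^j$ and $\theta=\tfrac{2n}{2n+1}$ for $v$, matching the relation $\tfrac{2}{p}+\tfrac{1}{4n+2}=\tfrac12$) is exactly what the paper suppresses, and your treatment of continuous dependence via differences of Picard iterates plus stability of the contraction for $v_1-v_2$ is the intended argument.
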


\vanish{
Similarly, for $n\geq 2$ and $m\in \{1,\ldots, 2n-1\}$, we can iterate to obtain
\begin{equation*}%\label{eq:iterateinfty}
\| A_m(f_1,f_2) \|_{L_t^\infty L^\infty_{x}} \lesssim \| A_m (f_1,f_2) \|_{L_t^\infty \mathcal{B}^{s(1)+\varepsilon}_{\infty,1,1}} \lesssim \| f_1 \|^m_{\mathcal{B}^{s(1)+\varepsilon}_{\infty,1,1}} + \|f_2 \|^m_{\mathcal{B}^{s(1)-1+\varepsilon}_{\infty,1,1}}
\end{equation*}
for all $f_1\in \mathcal{B}^{s(1)+\varepsilon}_{\infty,1,1}(\Rd)$ and $f_2\in \mathcal{B}^{s(1)-1+\varepsilon}_{\infty,1,1}(\Rd)$. In fact, technically speaking the first inequality is not contained in Section \ref{subsec:embed}, but it follows in the same manner as the embeddings proved there (see also \eqref{eq:EmbeddingLargep}).
Moreover, with $p=4n+2$ as before, by applying \eqref{eq:SobolevBesov8}, \eqref{eq:SobolevBesov4} and
\eqref{eq:SobolevBesov7} one obtains
\begin{equation*}
\| f \|_{\mathcal{B}^{s(1)+\varepsilon}_{\infty,1,1}(\Rd)} \lesssim \| f \|_{\mathcal{B}^{s(1)+\frac{d+1}{2p}+\varepsilon}_{p,1,1}(\Rd)} \lesssim \| f \|_{\mathcal{B}^{s(1)+\frac{d+1}{2p}+2\varepsilon}_{p,2,2}(\Rd)}.
\end{equation*}
Set
\[
\alpha := \frac{d-1}{4} + \frac{d+1}{2p}.
\]
We have now proved the following lemma.

where
\[
%\begin{equation}
%\label{eq:RegularityLpSpaces}
\alpha := \frac{d-1}{4}+\frac{d+1}{2p} + \frac{d-1}{2} \Big( \frac{1}{2} - \frac{1}{p} \Big) = \frac{d-1}{2} \Big( 1 - \frac{1}{p} \Big) + \frac{d+1}{2p}.
%\end{equation}
\]
Using standard embeddings between Sobolev and Besov spaces, this yields
\[
\|f\|_{\mathcal{B}^{s(1)+\varepsilon}_{\infty,1,1}(\Rd)}\lesssim \| f \|_{W^{s,p}(\Rd)}
\]
for all $s>\alpha$ and $f\in W^{s,p}(\Rd)$.
% $s \geq \frac{d-1}{4}+\frac{d+1}{2p} + \frac{d-1}{2} \big( \frac{1}{2} - \frac{1}{p} \big) = \frac{d-1}{2} \big( 1 - \frac{1}{4n+2} \big) + \frac{d+1}{2p}$.
We now obtain
\begin{equation*}
\| A_m(f_1,f_2) \|_{L_t^\infty([-1,1],L^\infty (\Rd))}\lesssim \| f_1 \|^m_{W^{s,p}(\Rd)} + \| f_2 \|^m_{W^{s-1,p}(\Rd)}.
\end{equation*}
We can argue as above to find
\begin{equation*}
\| A_m(f_1,f_2) \|_{L_t^\infty L_x^\infty} \lesssim \| f_1 \|^m_{\mathcal{B}^{s(1)+\varepsilon}_{\infty,1}} + \|f_2 \|^m_{\mathcal{B}^{s(1)-1+\varepsilon}_{\infty,1}}.
\end{equation*}
Now we use the embeddings
\begin{equation*}
\| f \|_{\mathcal{B}^{s}_{\infty,1}} \lesssim \| f \|_{\mathcal{B}^{s+\frac{d+1}{2p}}_{p,1}} \lesssim \| f \|_{\mathcal{B}^{s+\frac{d+1}{2p}}_{p,2}}.
\end{equation*}
This shows that 
\begin{equation*}
\| A_m(f_1,f_2) \|_{L_t^\infty L_x^\infty} \lesssim \| f_1 \|^m_{\mathcal{B}^{s}_{p,2}} + \| f_2 \|^m_{\mathcal{B}^{s-1}_{p,2}}
\end{equation*}
for $s> \tilde{\alpha}$ with
\begin{equation}
\label{eq:RegularityAdaptedSpaces}
\tilde{\alpha} := \frac{d-1}{4} + \frac{d+1}{2p}.
\end{equation}

\eqref{eq:SobolevBesov7} and \eqref{eq:SobolevBesov}, one obtains
\begin{equation*}
\| f \|_{\mathcal{B}^{s(1)+\varepsilon}_{\infty,1,1}(\Rd)} \lesssim \| f \|_{\mathcal{B}^{s(1)+\frac{d+1}{2p}+\varepsilon}_{p,1,1}(\Rd)} \lesssim \| f \|_{\mathcal{B}^{s(1)+\frac{d+1}{2p}+2\varepsilon}_{p,p,p}(\Rd)} \lesssim \|f\|_{\Ba^{\alpha+2\veps}_{p,p}(\Rd)},
\end{equation*}
where
\[
%\begin{equation}
%\label{eq:RegularityLpSpaces}
\alpha := \frac{d-1}{4}+\frac{d+1}{2p} + \frac{d-1}{2} \Big( \frac{1}{2} - \frac{1}{p} \Big) = \frac{d-1}{2} \Big( 1 - \frac{1}{p} \Big) + \frac{d+1}{2p}.
%\end{equation}
\]
Using standard embeddings between Sobolev and Besov spaces, this yields
\[
\|f\|_{\mathcal{B}^{s(1)+\varepsilon}_{\infty,1,1}(\Rd)}\lesssim \| f \|_{W^{s,p}(\Rd)}
\]
for all $s>\alpha$ and $f\in W^{s,p}(\Rd)$.
% $s \geq \frac{d-1}{4}+\frac{d+1}{2p} + \frac{d-1}{2} \big( \frac{1}{2} - \frac{1}{p} \big) = \frac{d-1}{2} \big( 1 - \frac{1}{4n+2} \big) + \frac{d+1}{2p}$.
We now obtain
\begin{equation*}
\| A_m(f_1,f_2) \|_{L_t^\infty([-1,1],L^\infty (\Rd))}\lesssim \| f_1 \|^m_{W^{s,p}(\Rd)} + \| f_2 \|^m_{W^{s-1,p}(\Rd)}.
\end{equation*}
We can argue as above to find
\begin{equation*}
\| A_m(f_1,f_2) \|_{L_t^\infty L_x^\infty} \lesssim \| f_1 \|^m_{\mathcal{B}^{s(1)+\varepsilon}_{\infty,1}} + \|f_2 \|^m_{\mathcal{B}^{s(1)-1+\varepsilon}_{\infty,1}}.
\end{equation*}
Now we use the embeddings
\begin{equation*}
\| f \|_{\mathcal{B}^{s}_{\infty,1}} \lesssim \| f \|_{\mathcal{B}^{s+\frac{d+1}{2p}}_{p,1}} \lesssim \| f \|_{\mathcal{B}^{s+\frac{d+1}{2p}}_{p,2}}.
\end{equation*}
This shows that 
\begin{equation*}
\| A_m(f_1,f_2) \|_{L_t^\infty L_x^\infty} \lesssim \| f_1 \|^m_{\mathcal{B}^{s}_{p,2}} + \| f_2 \|^m_{\mathcal{B}^{s-1}_{p,2}}
\end{equation*}
for $s> \tilde{\alpha}$ with
\begin{equation}
\label{eq:RegularityAdaptedSpaces}
\tilde{\alpha} := \frac{d-1}{4} + \frac{d+1}{2p}.
\end{equation}
We have now proved the following lemma.

\begin{lemma}
Let $d \in \{ 2, 3 \}$, $n \geq 2$, $0 \leq j \leq n-1$, and $u^j$ defined like in \eqref{eq:HigherPicardIterates}. Then, for $\varepsilon >0$, there is $\varepsilon_n \leq 1$ and $\tilde{\varepsilon}_n \leq 1$ such that
\begin{equation*}
\| u^j \|_{L^\infty_{t,x}([0,1] \times \R^d)} + \| u^j \|_{L_t^\infty L^{\frac{4n+2}{2j+1}}([0,1] \times \R^d)} \lesssim \| f \|_{L^{4n+2}_{\alpha + \varepsilon}(\R^d)}
\end{equation*}
holds true provided that $\| f \|_{L^{4n+2}_{\alpha + \varepsilon}} \leq \varepsilon_n$, and
\begin{equation*}
\| u^j \|_{L^\infty_{t,x}([0,1] \times \R^d)} + \| u^j \|_{L^\infty_t L^{\frac{4n+2}{2j+1}}([0,1] \times \R^d)} \lesssim \| f \|_{\mathcal{B}^{\tilde{\alpha}+\varepsilon}_{4n+2,2}(\R^d)}
\end{equation*}
provided that $\| f \|_{\mathcal{B}^{\tilde{\alpha}+\varepsilon}_{4n+2,2}(\R^d)} \leq \tilde{\varepsilon}_n$.
\end{lemma}

With the estimate for the higher Picard iterates at hand, the following proposition is proved like in \cite[Proposition~4.6]{Schippa2022}:
\begin{proposition}
\label{prop:ExistenceDifferenceSolution}
Let $d \in \{2,3\}$, $\varepsilon > 0$, $n \geq 2$, and $\varepsilon_n$, $\tilde{\varepsilon}_n \leq 1$ like in Lemma \ref{lem:IterationAm}. Then, there is a unique $v \in S^0$, which solves \eqref{eq:DifferenceSolution} with $v(0) = \dot{v}(0) = 0$.
\end{proposition}
This yields the following theorem on local well-posedness for slowly decaying initial data. We focus on the two-dimensional case with small data to simplify the Strichartz space, but there are clearly analogs available in higher dimensions.
\begin{theorem}
\label{thm:SlowlyDecayingData}
Let $d=2$, $\varepsilon > 0$, $n \geq 2$, and $(f_1,f_2)$, $\varepsilon_n$, and $\tilde{\varepsilon}_n$ like in Proposition \ref{prop:ExistenceDifferenceSolution}. Let $\frac{2}{p} + \frac{1}{4n+2} = \frac{1}{2}$. Then, there is $u \in L_t^p([0,1],L^{4n+2}(\R^2))$ which solves \eqref{eq:CubicNLW}. Furthermore, for 
\begin{align*}
\| (f_1,f_2) \|_{L^{4n+2}_{\alpha+\varepsilon} \times L^{4n+2}_{\alpha + \varepsilon -1}} + \| (g_1,g_2) \|_{L^{4n+2}_{\alpha+\varepsilon} \times L^{4n+2}_{\alpha + \varepsilon -1}} &\leq \varepsilon_n \\
\text{ or } 
\| (f_1,f_2) \|_{\mathcal{B}^{\tilde{\alpha}+\varepsilon}_{4n+2,2} \times \mathcal{B}^{\tilde{\alpha}+\varepsilon}_{4n+2,2}} + \| (g_1,g_2) \|_{\mathcal{B}^{\tilde{\alpha}-1+\varepsilon}_{4n+2,2} \times \mathcal{B}^{\tilde{\alpha}-1+\varepsilon}_{4n+2,2}} &\leq \tilde{\varepsilon}_n
\end{align*}
we have for the corresponding solutions $\| u_1 - u_2 \|_{L^p([0,1],L^{4n+2})} \to 0$ provided that the initial data are converging in the spaces of initial data.
\end{theorem}
}

\subsection{Global well-posedness results}
\label{subsection:GlobalNLW}
We prove global results for the defocusing cubic nonlinear wave equation in two dimensions:
\begin{equation}
\label{eq:DefocusingWaveEquation}
\left\{ \begin{array}{cl}
\partial_t^2 u - \Delta_{x} u &= -|u|^2 u, \quad (t,x) \in \R \times \R^2, \\
u(0) &= f_1 \in \mathcal{B}^{s}_{p,2}(\R^2), \qquad \dot{u}(0) = f_2 \in \mathcal{B}^{s-1}_{p,2}(\R^2).
\end{array} \right.
\end{equation}
We focus on the case where $p=6$.

The main result of this section is the following:
\begin{theorem}
\label{thm:GlobalNLW}
Let $s > 1/2$, and $(f_1,f_2) \in \mathcal{B}^{s}_{6,2,2}(\R^2) \times \mathcal{B}^{s-1}_{6,2,2}(\R^2)$. Then, for any $T>0$, there is a global solution $u \in L_t^4([0,T],L^6(\R^2))$ to \eqref{eq:DefocusingWaveEquation}.
\end{theorem}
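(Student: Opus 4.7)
My plan is to combine Theorem \ref{thm:LWPNLW} (the $p=6$ local theorem) with a standard blow-up alternative and a Dodson--Soffer--Spencer-type splitting of the initial data. First I would establish a blow-up alternative: for any initial data in $(\mathcal{B}^{\varepsilon}_{6,2,2}(\R^2) + \dot{H}^{1/2}(\R^2)) \times (\mathcal{B}^{\varepsilon-1}_{6,2,2}(\R^2) + \dot{H}^{-1/2}(\R^2))$, the local solution provided by Theorem \ref{thm:LWPNLW} extends up to any prescribed time $T$ as long as $\|u\|_{L^4_t([0,T], L^6(\R^2))}$ remains finite. This follows by iterating the contraction on short sub-intervals, since the trilinear estimate in the proof of Theorem \ref{thm:LWPNLW} comes with a positive power $T^\kappa$ that allows one to reopen the contraction after each step.

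Second, I would split the data. Since $C^\infty_c(\R^2)$ is dense in $\mathcal{B}^{\varepsilon}_{6,2,2}(\R^2)$ (the finite summation indices $q=r=2$ permit standard Schwartz-mollification arguments), and since $\mathcal{B}^s_{6,2,2} \subseteq \mathcal{B}^{\varepsilon}_{6,2,2}$ for $s > \varepsilon$, for any $\eta > 0$ to be fixed later I can write
\begin{equation*}
f_1 = \phi_1 + \psi_1, \qquad f_2 = \phi_2 + \psi_2,
\end{equation*}
with $(\phi_1,\phi_2) \in H^1(\R^2) \times L^2(\R^2)$ and $\|\psi_1\|_{\mathcal{B}^{\varepsilon}_{6,2,2}} + \|\psi_2\|_{\mathcal{B}^{\varepsilon-1}_{6,2,2}} < \eta$, upon choosing $\varepsilon \in (0, s - 1/2)$. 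Let $u_E$ denote the unique global-in-time energy solution of the defocusing cubic NLW with data $(\phi_1,\phi_2)$; this exists by the classical energy-subcritical theory in two dimensions, and energy conservation combined with the Strichartz estimates of Theorem \ref{thm:StrichartzEstimates} yields $\|u_E\|_{L^\infty_t([0,T],H^1)} + \|u_E\|_{L^4_t([0,T], L^6(\R^2))} \leq C_T$.

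Setting $u = u_E + v$, the remainder solves
\begin{equation*}
\partial_t^2 v - \Delta v = -(3 u_E^2 v + 3 u_E v^2 + v^3), \qquad v(0) = \psi_1, \quad \dot{v}(0) = \psi_2,
\end{equation*}
which I would solve on $[0,T]$ by a contraction in $S_T = L^4_t([0,T], L^6(\R^2)) \cap C([0,T], \mathcal{B}^{\varepsilon}_{6,2,2}(\R^2) + \dot{H}^{1/2}(\R^2))$. The cubic term $v^3$ is handled exactly as in the proof of Theorem \ref{thm:LWPNLW}, while the mixed terms $u_E^2 v$ and $u_E v^2$ are controlled via H\"older's inequality using the $L^4_t L^6_x$ bound on $u_E$, together with the Strichartz estimates of Theorem \ref{thm:StrichartzEstimates}.

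The main obstacle is ensuring that this contraction closes on the full interval $[0,T]$ despite the potentially large factor $C_T$ coming from $u_E$. I would overcome this by choosing $\eta = \eta(T, C_T)$ sufficiently small, so that the smallness of $\psi_1,\psi_2$ in the adapted Besov norms outweighs $C_T$ in the fixed-point iteration for $v$; the ``potential'' term $3 u_E^2 v$ is then absorbed as a small perturbation of the free half-wave flow on $[0,T]$. Combined with the blow-up alternative of the first step, this produces $u = u_E + v \in L^4_t([0,T], L^6(\R^2))$ for every $T > 0$, which is the assertion of Theorem \ref{thm:GlobalNLW}.
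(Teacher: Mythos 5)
Your overall strategy -- split the data into a smooth, large, energy-class part and a small, rough, adapted-Besov part, solve the energy equation globally, and then perturb around the global energy solution -- is a reasonable one, and it differs genuinely from the paper. The paper instead sets $v = u - w$ where $w$ is the \emph{free} half-wave evolution of the \emph{original} data (so $v(0) = \dot{v}(0) = 0$), and controls $M(v) + E(v)$ via a Gr\o nwall argument (Proposition \ref{prop:GrowthBoundMassEnergy}) together with the blow-up alternative of Lemma \ref{lem:BlowUpAlternative}; the hypothesis $s > 1/2$ is precisely what is needed to bound $\|w(t)\|_{L^\infty(\R^2)}$ through \eqref{eq:SobolevBesov} and Sobolev embedding. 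The Gr\o nwall route avoids any global-in-time contraction entirely: one only needs the short-time contraction from Theorem \ref{thm:LWPNLW} plus an a priori bound that prevents blow-up on finite intervals.

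The gap in your argument is the final step, where you assert that the ``potential'' term $3u_E^2 v$ can be absorbed as a small perturbation of the free flow on $[0,T]$ by choosing $\eta = \eta(T,C_T)$ sufficiently small. This does not follow: the Lipschitz constant of the linear map $v \mapsto \int_0^t \frac{\sin((t-s)\sqrt{-\Delta})}{\sqrt{-\Delta}} (3 u_E^2 v)(s)\,ds$ on $[0,T]$ is controlled by norms of $u_E$ alone -- roughly $\|u_E\|_{L_t^4([0,T],L^6_x)}^2$ after H\"older and Strichartz -- and is completely independent of the size $\eta$ of the data $(\psi_1,\psi_2)$. For $T$ large that Lipschitz constant will exceed $1$, so the map is not a contraction on $[0,T]$ no matter how small $\eta$ is. To rescue your approach you would have to subdivide $[0,T]$ into finitely many sub-intervals on which the Strichartz norm of $u_E$ is below a fixed threshold, run the contraction on each, and track how the data for $v$ amplifies (say by a factor $C$) across each sub-interval; one then needs $\eta \lesssim C^{-N(T)}$ to keep the bootstrap alive, where $N(T)$ is the number of sub-intervals. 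Alternatively, one can dispense with the contraction and run the Gr\o nwall/energy argument on $v = u - u_E$ directly, but this creates a new difficulty: $u_E$ only lies in $L^\infty_t H^1_x$, and $H^1(\R^2)$ does \emph{not} embed into $L^\infty(\R^2)$, so the term $(\partial_t v, v|u_E|^2)$ cannot be closed the way the paper closes $(\partial_t v, v|w|^2)$. The paper's choice of the free evolution $w$ (rather than a nonlinear energy solution) is precisely what makes the crucial $L^\infty$ bound available from the $s > 1/2$ regularity.

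A more minor point: you invoke density of $C^\infty_c(\R^2)$ in $\mathcal{B}^\varepsilon_{6,2,2}(\R^2)$. This is plausible and likely provable by mollification and cutoff arguments, but the paper neither states nor proves it, so it would need justification. You also invoke the blow-up alternative at the start and again at the end, but if your contraction for $v$ really did close on all of $[0,T]$ directly, you would not need the blow-up alternative at all; its appearance in your argument signals the local-to-global bootstrap that is in fact missing.
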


In the following the arguments from \cite{Schippa2022} are adapted, which were previously applied to nonlinear Schr\"odinger equations. To avoid technicalities, we shall consider Schwartz initial data which admit global solutions and allow for integration by parts arguments. The a priori assumption can be removed later by well-posedness and limiting arguments.
We denote the linear part of the solution to \eqref{eq:DefocusingWaveEquation} by
\begin{equation*}
w(t) = \cos(t \sqrt{- \Delta}) f_1 + \frac{\sin (t \sqrt{- \Delta})}{\sqrt{- \Delta}} f_2.
\end{equation*}
The difference with the full solution is given by
\begin{equation}
\label{eq:Difference}
v(t) = u(t) - w(t) = - \int_0^t \frac{\sin((t-s) \sqrt{-\Delta})}{\sqrt{- \Delta}} (|v+w|^2 (v+w)) ds.
\end{equation}
We have the following blow-up alternative (cf. \cite[Lemma~4.9]{Schippa2022}).

\begin{lemma}
\label{lem:BlowUpAlternative}
Let $s > 0$, $(f_1,f_2) \in (\mathcal{B}^{s}_{6,2,2}(\R^2) + H^{1}(\R^2)) \times (\mathcal{B}^{s-1}_{6,2,2}(\R^2) + L^2(\R^2))$, and $u$ be the solution to \eqref{eq:DefocusingWaveEquation} provided by Theorem \ref{thm:LWPNLW} in $L_t^4([0,T],L^6(\R^2))$. If $T^*$ is maximal such that $u \in L_t^4([0,T],L^6(\R^2))$ for $T < T^*$, but $u \notin L_t^4([0,T^{*}],L^6(\R^2))$, then $\lim_{t \to T^*} ( \| v(t) \|_{H^{1}(\R^2)} + \| \partial_t v(t) \|_{L^2} ) = \infty$ with $v$ defined like in \eqref{eq:Difference}.
\end{lemma}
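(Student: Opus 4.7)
The plan is to argue by contrapositive: if $\liminf_{t \uparrow T^*} \bigl( \|v(t)\|_{H^1(\R^2)} + \|\partial_t v(t)\|_{L^2(\R^2)} \bigr) < \infty$, I will extend $u$ past $T^*$ in $L^4_t L^6_x$, contradicting the definition of $T^*$. Concretely, fix a finite $M$ and a sequence $t_n \uparrow T^*$ along which $\|v(t_n)\|_{H^1} + \|\partial_t v(t_n)\|_{L^2} \leq M$, and treat each $t_n$ as a fresh initial time, restarting Theorem \ref{thm:LWPNLW} (in the inhomogeneous Sobolev variant noted in Remark \ref{rem:LWP}) from data $(u(t_n), \partial_t u(t_n))$.

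The key step is a uniform-in-$n$ bound on the restart data in $(\mathcal{B}^\varepsilon_{6,2,2}(\R^2) + H^{1/2}(\R^2)) \times (\mathcal{B}^{\varepsilon-1}_{6,2,2}(\R^2) + H^{-1/2}(\R^2))$ for some $\varepsilon \in (0,s)$. Decomposing $u(t_n) = v(t_n) + w(t_n)$, the piece $(v(t_n), \partial_t v(t_n))$ embeds from $H^1 \times L^2$ into $H^{1/2} \times H^{-1/2}$ with norm at most $M$ by choice of $t_n$. For the linear part I would split $f_1 = f_1^{\mathcal{B}} + f_1^{H}$ with $f_1^{\mathcal{B}} \in \mathcal{B}^s_{6,2,2}(\R^2)$ and $f_1^{H} \in H^1(\R^2)$, and analogously for $f_2$, then control $w(t_n)$ by (i) Proposition \ref{prop:PolynomialGrowth} applied to the half-wave evolution of $f_i^{\mathcal{B}}$, yielding at most polynomial growth in $t$ of the $\mathcal{B}^\varepsilon_{6,2,2}$-norm, together with the embedding $\mathcal{B}^s_{6,2,2} \hookrightarrow \mathcal{B}^\varepsilon_{6,2,2}$, and (ii) unitarity of the half-wave group on $H^\sigma(\R^2)$ for the Sobolev pieces $f_i^{H}$. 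Since $T^* < \infty$, both bounds are uniform for $t_n \in [0, T^*]$.

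With this uniform bound in hand, Theorem \ref{thm:LWPNLW} supplies a $\delta > 0$ independent of $n$ and a solution $\tilde u_n \in L^4_t([t_n, t_n + \delta], L^6(\R^2))$ of \eqref{eq:DefocusingWaveEquation} with data $(u(t_n), \partial_t u(t_n))$ at time $t_n$. The uniqueness supplied by the contraction mapping in $L^4_t L^6_x \cap C_t(\mathcal{B}^\varepsilon_{6,2,2} + H^{1/2})$ identifies $\tilde u_n$ with $u$ on the overlap $[t_n, \min(t_n + \delta, T^*))$, so concatenating places $u$ in $L^4([0, t_n + \delta], L^6(\R^2))$. For $n$ large enough that $t_n + \delta > T^*$, this contradicts the maximality of $T^*$. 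I expect the main technical obstacle to be the uniform-in-$n$ control of $w(t_n)$ in the adapted Besov component; this is precisely what Proposition \ref{prop:PolynomialGrowth} delivers, and without the invariance of $\mathcal{B}^s_{6,2,2}$ under the half-wave group the restart norms could in principle blow up even along a subsequence on which the nonlinear remainder $v$ stays bounded in $H^1 \times L^2$.
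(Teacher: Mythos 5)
Your proof is correct and follows essentially the same strategy as the paper: argue by contradiction/contrapositive, use the uniform bound on $\|v(t_n)\|_{H^1} + \|\partial_t v(t_n)\|_{L^2}$ together with the free-evolution estimates (Proposition \ref{prop:PolynomialGrowth} for the Besov component and unitarity for the Sobolev component) to control the restart data uniformly in $n$, then invoke the lower bound on the local existence time from Theorem \ref{thm:LWPNLW}/Remark \ref{rem:LWP} to extend $u$ past $T^*$. The only cosmetic difference is that you embed $H^1 \times L^2$ into $H^{1/2} \times H^{-1/2}$ before applying the local theory, whereas the paper applies Remark \ref{rem:LWP} with $s=1$ directly; both routes are valid.
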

\begin{proof}
We note that for the free solution we have
\begin{align}
\label{eq:FreeEstimateI}
\sup_{t \in [0,T]} \| w(t) \|_{\mathcal{B}^s_{6,2,2}+H^1} &\lesssim_T \| (f_{1},f_{2}) \|_{(\mathcal{B}^{s}_{6,2,2} + H^1) \times (\mathcal{B}^{s-1}_{6,2,2} + L^2)}, \\
\label{eq:FreeEstimateII}
\sup_{t \in [0,T]} \| \partial_t w(t) \|_{\mathcal{B}^{s-1}_{6,2,2}+L^2} &\lesssim_T \| (f_{1},f_{2}) \|_{(\mathcal{B}^{s}_{6,2,2} + H^1) \times (\mathcal{B}^{s-1}_{6,2,2} + L^2)}.
\end{align}
We further argue by contradiction. Suppose that there is a sequence $(t_n) \subseteq [0,T^*)$ with $t_n \uparrow T^*$ and
\begin{equation}
\label{eq:InhomogeneousEstimate}
\lim_{n \to \infty} ( \| v(t_n) \|_{H^1} + \| v(t_n) \|_{L^2} ) \leq C.
\end{equation}
But by Theorem \ref{thm:LWPNLW} and Remark \ref{rem:LWP}, we can solve the nonlinear wave equation with initial data
\begin{equation*}
w(t_n) + v(t_n) \in \mathcal{B}^s_{6,2,2} + H^1, \quad \dot{w}(t_n) + \dot{v}(t_n) \in \mathcal{B}^{s-1}_{6,2,2} + L^2
\end{equation*}
for times $T=T(\| w(t_n) + v(t_n) \|_{\mathcal{B}^s_{6,2,2} + H^1}, \| \dot{w}(t_n) + \dot{v}(t_n) \|_{\mathcal{B}^{s-1}_{6,2,2} + L^2})$ and by \eqref{eq:FreeEstimateI}, \eqref{eq:FreeEstimateII}, and \eqref{eq:InhomogeneousEstimate}, we find
\begin{equation*}
\begin{split}
\;&\| w(t_n) + v(t_n) \|_{\mathcal{B}^s_{6,2,2} + H^1} + \| \dot{w}(t_n) + \dot{v}(t_n) \|_{\mathcal{B}^{s-1}_{6,2,2} + L^2}\\
& \lesssim_{T^*} C + \| (f_{1},f_{2}) \|_{(\mathcal{B}^{s}_{6,2,2} + H^1) \times (\mathcal{B}^{s-1}_{6,2,2} + L^2)}.
\end{split}
\end{equation*}
This means the local existence time is bounded from below, which yields a contradiction because it means we can continue the solution beyond $T^*$. Hence, the solution is global.
\end{proof}

Hence, for the proof of global well-posedness it suffices to show
\begin{equation*}
\sup_{t \in [0,T]} (\| v(t) \|_{H^{1}(\R^2)} + \| \partial_t v(t) \|_{L^2(\Rtwo)}) \leq C(T).
\end{equation*}
Recall that mass and energy are conserved quantities for (smooth) solutions to \eqref{eq:DefocusingWaveEquation}:
\begin{align}
\label{eq:Mass}
M(u) &= \int_{\R^2} |u|^2 dx, \\
\label{eq:Energy}
E(u) &= \int_{\R^2} \frac{1}{2} |\partial_t u|^2 + \frac{1}{2} |\nabla_x u|^2 + \frac{1}{4} |u|^4 dx.
\end{align}
But the quantities are not conserved for differences of solutions or $v$. Still we can control $M(v)+E(v)$ by Gr\o nwall's argument for sufficiently regular initial data like in \cite{DodsonSofferSpencer2021,Schippa2022} in the context of the defocusing nonlinear Schr\"odinger equation.
In the proof we have to control $\| w(t) \|_{L^6(\R^2)}$ and $\| w(t) \|_{L^\infty(\Rd)}$, for which we use embeddings, namely \eqref{eq:EmbeddingLargep}, Propositions \ref{prop:Sobolev} and \ref{prop:Sobolev2}, and a standard Sobolev embedding:
\begin{align*}
%\label{eq:EstimateFreeSolutionsLpSpaces}
\| w(t) \|_{L^6(\R^2)} &\lesssim \| w(t) \|_{\mathcal{B}^{1/6}_{6,6,6}(\Rtwo)} \lesssim \| w(t) \|_{\mathcal{B}^{1/6+\veps}_{6,2,2}(\R^2)}, \\
\| w(t) \|_{L^\infty(\R^2)} &\lesssim \|w(t) \|_{W^{1/3+\veps,6}(\R^2)} \lesssim \| w(t) \|_{\mathcal{B}^{1/2+2\varepsilon}_{6,2,2}(\R^2)}.
\end{align*}
We show the following.

\begin{proposition}
\label{prop:GrowthBoundMassEnergy}
Let $\varepsilon > 0$, $(f_1,f_2) \in \mathcal{B}^{1/2+\varepsilon}_{6,2,2}(\Rtwo) \times \mathcal{B}^{-1/2+\varepsilon}_{6,2,2}(\Rtwo)$, and $T>0$. With notation as above, the following estimate holds for $0 \leq t \leq T$:
\begin{equation*}
%\label{eq:DerivativeEstimate}
\partial_t (M(v) + E(v) + 1)(t) \leq C_T ( M(v) + E(v) + 1)
\end{equation*}
for all $0 \leq t \leq T$.
\end{proposition}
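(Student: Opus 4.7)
The plan is to derive the differential inequality by running a standard energy identity for $v$, and then to control the resulting norms of the free solution $w$ on $[0,T]$ via the embeddings recalled just before the proposition together with Proposition \ref{prop:PolynomialGrowth}. Since $u = v+w$ solves \eqref{eq:DefocusingWaveEquation} and $w$ solves the free wave equation, $v$ satisfies
\[
\partial_t^2 v-\Delta v = -|v+w|^2(v+w),\qquad v(0)=\dot v(0)=0.
\]
Restricting to real-valued data (the complex case is parallel) and expanding $(v+w)^3 = v^3+3v^2w+3vw^2+w^3$, I would test this equation against $\partial_t v$, integrate the Laplacian by parts, and use that the $v^3\partial_t v$ contribution matches $\partial_t \int \tfrac14 |v|^4\,dx$. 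This produces
\[
\partial_t E(v) = -3\int v^2 w\,\partial_t v\,dx - 3\int vw^2\,\partial_t v\,dx - \int w^3\,\partial_t v\,dx,
\]
while $\partial_t M(v) = 2\int v\,\partial_t v\,dx \lesssim M(v)+E(v)$ by Cauchy--Schwarz.

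Next I would estimate each of the three cross-terms by H\"older and Young, using that $M(v)+E(v)$ dominates $\|v\|_{L^2}^2$, $\|v\|_{L^4}^4$, and $\|\partial_t v\|_{L^2}^2$:
\begin{align*}
\Big|\int v^2 w\,\partial_t v\,dx\Big| &\leq \|w\|_{L^\infty}\|v\|_{L^4}^2\|\partial_t v\|_{L^2} \lesssim \|w\|_{L^\infty}\bigl(M(v)+E(v)\bigr), \\
\Big|\int vw^2\,\partial_t v\,dx\Big| &\leq \|w\|_{L^\infty}^2\|v\|_{L^2}\|\partial_t v\|_{L^2} \lesssim \|w\|_{L^\infty}^2\bigl(M(v)+E(v)\bigr), \\
\Big|\int w^3\,\partial_t v\,dx\Big| &\leq \|w\|_{L^6}^3\|\partial_t v\|_{L^2} \lesssim \|w\|_{L^6}^6 + E(v).
\end{align*}
Combining these with $\partial_t M(v)$ and writing $\|w(t)\|_{L^6}^6 \leq \|w(t)\|_{L^6}^6\bigl(M(v)+E(v)+1\bigr)$, one arrives at
\[
\partial_t\bigl(M(v)+E(v)+1\bigr) \lesssim \bigl(1+\|w(t)\|_{L^\infty}+\|w(t)\|_{L^\infty}^2+\|w(t)\|_{L^6}^6\bigr)\bigl(M(v)+E(v)+1\bigr).
\]

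The remaining step is to bound the coefficient on the right by a constant $C_T$ for $t\in[0,T]$. For this I would invoke the two embeddings displayed just before the proposition, which give $\|w(t)\|_{L^6}+\|w(t)\|_{L^\infty}\lesssim \|w(t)\|_{\mathcal{B}^{1/2+\varepsilon}_{6,2,2}}$ after adjusting $\varepsilon$ if necessary, together with Proposition \ref{prop:PolynomialGrowth} applied to the cosine propagator and the high-frequency part of $\sin(t\sqrt{-\Delta})/\sqrt{-\Delta}$, and a Mikhlin bound as in \eqref{eq:Mikhlin} for the low frequencies of the latter. This yields $\sup_{t\in[0,T]}\|w(t)\|_{\mathcal{B}^{1/2+\varepsilon}_{6,2,2}}\leq C_T\bigl(\|f_1\|_{\mathcal{B}^{1/2+\varepsilon}_{6,2,2}}+\|f_2\|_{\mathcal{B}^{-1/2+\varepsilon}_{6,2,2}}\bigr)$ and finishes the proof. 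The main obstacle is the $w^3$ term in the nonlinearity for $v$: since it carries no factor of $v$, Young's inequality produces a pure $\|w(t)\|_{L^6}^6$ contribution that does not sit inside $M(v)+E(v)$ a priori. This is precisely why the proposition is stated with the extra additive constant $+1$, so that $\|w\|_{L^6}^6$ can enter as a multiplicative coefficient and the Gr\o nwall-type inequality closes.
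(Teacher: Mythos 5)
Your proposal is correct and follows essentially the same route as the paper: you differentiate $M(v)+E(v)$, use that $v$ solves the wave equation with source $-|v+w|^2(v+w)$ with $v^3$ cancelling against the $|v|^4$ potential, estimate the three cross terms in $v,w$ by H\"older exactly as the paper does (pairing $\|\partial_t v\|_{L^2}$ with $\|v\|_{L^4}^2\|w\|_{L^\infty}$, $\|v\|_{L^2}\|w\|_{L^\infty}^2$, and $\|w\|_{L^6}^3$), and control $\|w(t)\|_{L^6}$ and $\|w(t)\|_{L^\infty}$ on $[0,T]$ via the embeddings displayed before the proposition and Proposition~\ref{prop:PolynomialGrowth}. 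The only cosmetic differences are your restriction to real data and your use of Young's inequality on the $w^3$ term rather than simply bounding $\|w\|_{L^6}^3\|\partial_t v\|_{L^2}\lesssim_T E(v)^{1/2}\lesssim M(v)+E(v)+1$.
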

With Proposition \ref{prop:GrowthBoundMassEnergy} in place, we find by Gr\o nwall's argument:
\begin{equation*}
(M(v) + E(v) + 1)(t) \leq e^{\int_0^t C(s) ds},
\end{equation*}
and hence, $M(v)$ and $E(v)$ do not blow up. Theorem \ref{thm:GlobalNLW} follows.
\begin{proof}[Proof~of~Proposition~\ref{prop:GrowthBoundMassEnergy}]
We introduce the notation
\begin{equation*}
(f,g) = \Re \int_{\R^2} f(x) \overline{g}(x) dx.
\end{equation*}
For the growth of $M(v)$, we find
\begin{equation*}
\partial_t M(v) = 2(\partial_t v,v) \lesssim E(v)^{1/2} M(v)^{1/2} \lesssim M(v) + E(v) + 1.
\end{equation*}
For the time-derivative of $E$ we find
\begin{equation*}
\begin{split}
\partial_t E(v) &= ((\partial_t^2 v), \partial_t v) + (\partial_t \nabla_x v, \nabla_x v) + (\partial_t v, |v|^2 v) \\
&= (\partial_t v, \partial_t^2 v - \Delta v + |v|^2 v) \\
&= (\partial_t v, - |v+w|^2 (v+w) + |v|^2 v) \\
&\lesssim |(\partial_t v, |v|^2 w)| + |(\partial_t v, v |w|^2)| + |(\partial_t v, |w|^2 w)| \\
&\lesssim \| \partial_t v \|_{L^2} \| v \|_{L^4}^{2} \| w \|_{L^\infty} + \| \partial_t v \|_{L^2} \| v \|_{L^2} \| w \|_{L^\infty}^2 + \| \partial_t v \|_{L^2} \| w \|_{L^6}^3 \\
&\lesssim_T E(v) + E(v)^{1/2} M(v)^{1/2} + E(v)^{1/2}.
\end{split}
\end{equation*}
This finishes the proof.
\end{proof}
We sketch the extension to slower decaying initial data:
\begin{theorem}
\label{thm:GWPSlowerDecay}
Let $d=2$, $\varepsilon >0$, $n \geq 2$, $\tilde{\alpha}$ like in \eqref{eq:RegularityAdaptedSpaces}. Let $(f_1,f_2) \in \mathcal{B}^{\tilde{\alpha}+\epsilon}_{4n+2,2,2}(\Rtwo) \times \mathcal{B}^{\tilde{\alpha}+\varepsilon - 1}_{4n+2,2,2}(\Rtwo)$. Let $\frac{2}{p} + \frac{1}{4n+2} = \frac{1}{2}$. Then, there is some $\tilde{p} < p$ such that for any $T > 0$ there is $u \in L_t^{\tilde{p}}([0,T],L^{4n+2}(\R^2))$, which solves \eqref{eq:DefocusingWaveEquation}.
\end{theorem}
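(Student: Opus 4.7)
The plan is to parallel the proof of Theorem \ref{thm:GlobalNLW}, replacing the single free evolution $w$ by the sum $\sum_{j=0}^{n-1} u^j$ of Picard iterates from Theorem \ref{thm:SlowlyDecayingData}. On a maximal interval of existence I would write $u = \sum_{j=0}^{n-1} u^j + v$, where $v \in S^0 = L_t^\infty L_x^2 \cap L_t^4 L_x^\infty$ solves \eqref{eq:DifferenceSolution} with $v(0) = \dot{v}(0) = 0$. The lemma preceding Proposition \ref{prop:ExistenceDifferenceSolution} then furnishes uniform bounds on each $u^j$ in $L_t^\infty L_x^\infty$ and $L_t^\infty L_x^{(4n+2)/(2j+1)}$ in terms of the adapted Besov norms of the initial data, and hence, by interpolation on $\R^2$, in $L_t^\infty L_x^r$ for every $r \in [(4n+2)/(2j+1), \infty]$.

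Next, I would prove a blow-up alternative analogous to Lemma \ref{lem:BlowUpAlternative}: if the maximal time $T^*$ in $L_t^{\tilde p}([0,T], L^{4n+2}(\R^2))$ is finite, then $\|v(t)\|_{H^1} + \|\partial_t v(t)\|_{L^2} \to \infty$ as $t \uparrow T^*$. Were this quantity bounded along $t_n \uparrow T^*$, one would restart Theorem \ref{thm:SlowlyDecayingData} at time $t_n$ with data decomposed as $\sum u^j(t_n) + v(t_n)$ into an adapted-Besov part and an $H^1 \times L^2$ remainder, obtaining a local time of existence bounded below uniformly in $n$, which would extend the solution past $T^*$.

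The core step is a Gronwall estimate ruling out $H^1 \times L^2$ blow-up of $v$. Unwinding the definition of the $u^j$ telescopically yields $\sum_{j=1}^{n-1}(\partial_t^2 - \Delta) u^j = -|U'|^2 U'$ with $U' := \sum_{j=0}^{n-2} u^j$, so that $v$ satisfies
\begin{equation*}
\partial_t^2 v - \Delta v + |v|^2 v = |v|^2 v - |v+U|^2(v+U) + |U'|^2 U',
\end{equation*}
with $U := U' + u^{n-1}$. Crucially, the right-hand side contains no pure $|v|^2 v$ contribution: every cubic monomial either has at least one $U$-factor, or is a purely $u^j$-valued expression in which $u^{n-1}$ appears at least once. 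Pairing with $\partial_t v$ and using the defocusing energy identity together with $\|v\|_{L^4}^4 \leq 4 E(v)$, $\|v\|_{L^2}^2 = M(v)$, $\|\partial_t v\|_{L^2}^2 \leq 2 E(v)$, the mixed terms are controlled by H\"older with the $U$-factors placed in $L_{t,x}^\infty$, while the pure $u^j$-terms are handled by Cauchy--Schwarz with each factor placed in $L_x^6$ (which is available for $n \geq 2$ since $(4n+2)/(2n-1) \leq 6$). This yields $\partial_t(M(v) + E(v)) \lesssim_T M(v) + E(v) + 1$, and Gronwall gives $M(v)(t) + E(v)(t) \leq C(T)$ on $[0,T]$, contradicting blow-up.

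The final assertion $u \in L_t^{\tilde p}([0,T], L^{4n+2}(\R^2))$ for some $\tilde p < p$ then follows by interpolating $\|v\|_{L^{4n+2}} \lesssim \|v\|_{L^2}^{1/(2n+1)} \|v\|_{L^\infty}^{2n/(2n+1)}$ with H\"older in time, which is feasible precisely when $\tilde p < p = 2(2n+1)/n$, and combining with the $L_t^\infty L_x^{4n+2}$ bounds on the $u^j$ obtained via spatial interpolation. The main obstacle will be cleanly verifying the telescoping structural identity for $\sum(\partial_t^2 - \Delta) u^j$ and tracking that the H\"older exponents needed to close the pure-$u^j$ contributions in $L_x^2$ (i.e.~$(4n+2)/(2n-1) \leq 6$ across all $j$) hold; this is ultimately what pins down the threshold $\tilde{\alpha} + \veps$ in the hypothesis.
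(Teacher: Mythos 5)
Your overall architecture is correct and matches the paper's: decompose $u = \sum_{j=0}^{n-1} u^j + v$, prove a blow-up alternative for the iteration space $L^{\tilde p}_t L^{4n+2}_x$ along the lines of Lemma \ref{lem:BlowUpAlternative}, and then rule out $H^1\times L^2$ blow-up of $v$ by a Gr\o{}nwall estimate. The telescoping identity $(\partial_t^2-\Delta)\sum_{j=0}^{n-1}u^j = -|U'|^2U'$ with $U'=\sum_{j=0}^{n-2}u^j$, and the resulting observation that the forcing in the equation for $v$ consists of monomials each containing at least one factor of $U$ or at least one factor of $u^{n-1}$, are both correct and are exactly what the paper uses.

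However, there is a genuine gap in the H\"older pairing for the pure $u^j$-terms. You propose to estimate $(\partial_t v, u^{n-1}u^k u^m)$ by Cauchy--Schwarz and then place each $u^j$-factor in $L^6_x$, citing $(4n+2)/(2n-1)\leq 6$ as the requirement. But this inequality only guarantees $u^{n-1}\in L^6_x$. The available integrability exponents for $u^j$ form the interval $[(4n+2)/(2j+1),\,\infty]$, with lower endpoint \emph{decreasing} in $j$; in particular $u^0$ lies a priori only in $L^{4n+2}_x\cap L^\infty_x$, so its admissible range is $[4n+2,\infty]$, which excludes $L^6_x$ for every $n\geq 2$. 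The factor with the most restricted range is therefore $u^0$, not $u^{n-1}$, and your split fails whenever $k=0$ or $m=0$. The fix is the paper's split: place the guaranteed factor $u^{n-1}$ in the \emph{low} exponent space $L^{(4n+2)/(2n-1)}_x$ and the remaining two factors in $L^{4n+2}_x$ (which \emph{is} available for every $j$ by interpolation with $L^\infty_x$). Then
\begin{equation*}
\frac{1}{2}+\frac{2n-1}{4n+2}+\frac{1}{4n+2}+\frac{1}{4n+2}=\frac{1}{2}+\frac{2n+1}{4n+2}=1,
\end{equation*}
and the estimate closes. As a minor remark, your final interpolation step to recover $u\in L_t^{\tilde p}L^{4n+2}_x$ is not really needed as a separate argument: once the Gr\o{}nwall estimate shows $M(v)+E(v)$ stays bounded on $[0,T]$, the blow-up alternative itself yields that the solution persists in the iteration space up to time $T$.
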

The key point is that solving \eqref{eq:DefocusingWaveEquation} in $L_t^p L_x^q$-spaces provides us with a blow-up alternative:

\begin{lemma}
\label{lem:BlowUpSlowerDecayingData}
Let $s > \tilde{\alpha}$, 
\[
(f_1,f_2) \in (\mathcal{B}^s_{4n+2,2,2}(\R^2) + H^1(\R^2),\mathcal{B}^{s-1}_{4n+2,2,2}(\R^2) + L^2(\R^2)),
\]
 and let $u$ be the solution to \eqref{eq:DefocusingWaveEquation} provided by Theorem \ref{thm:SlowlyDecayingData} in $L_t^{\tilde{p}}([0,T],L^{4n+2}(\R^2))$. If $T^*$ is maximal such that $u \in L_t^{\tilde{p}}([0,T],L^{4n+2}(\R^2))$ for $T < T^*$, but we have $u \notin L_t^{\tilde{p}}([0,T],L^{4n+2}(\R^2))$, then $\lim_{t \to T^*}(\| v(t) \|_{H^1} + \| \partial_t v(t) \|_{L^2}) = \infty$ with $v$ defined like in \eqref{eq:DifferenceSolution}.
\end{lemma}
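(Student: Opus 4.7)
The plan is to mirror the proof of Lemma \ref{lem:BlowUpAlternative}, adapting it to the slower decaying data setting where the solution space is $L_t^{\tilde p}L_x^{4n+2}$ instead of $L_t^4 L_x^6$. The argument is a contradiction via the local well-posedness theory: if the $H^{1}\times L^{2}$ norm of $v$ does not blow up along some sequence $t_n\uparrow T^{*}$, one can reopen the local existence theorem at $t=t_{n}$ and continue $u$ past $T^{*}$, contradicting maximality.

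First I would record the analogues of \eqref{eq:FreeEstimateI}--\eqref{eq:FreeEstimateII} for the free part
\[
w(t)=\cos(t\sqrt{-\Delta})f_{1}+\frac{\sin(t\sqrt{-\Delta})}{\sqrt{-\Delta}}f_{2}.
\]
Namely, splitting $f_{j}=f_{j}^{\Ba}+f_{j}^{H}$ with $f_{j}^{\Ba}\in\Ba^{s-j+1}_{4n+2,2,2}(\R^{2})$ and $f_{j}^{H}$ in the respective $L^{2}$-based Sobolev space, Proposition \ref{prop:PolynomialGrowth} controls the evolution of the adapted part in $\Ba^{s-j+1}_{4n+2,2,2}$, while the standard energy estimate handles the $H^{1}\times L^{2}$ part. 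Taking the infimum over decompositions yields
\[
\sup_{t\in[0,T^{*}]}\big(\|w(t)\|_{\Ba^{s}_{4n+2,2,2}+H^{1}}+\|\partial_{t}w(t)\|_{\Ba^{s-1}_{4n+2,2,2}+L^{2}}\big)\lesssim_{T^{*}}\|(f_{1},f_{2})\|_{X\times Y},
\]
where $X\times Y$ denotes the initial data space in the statement.

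Next, I would argue by contradiction: assume there exist $C>0$ and $t_{n}\uparrow T^{*}$ with $\|v(t_{n})\|_{H^{1}}+\|\partial_{t}v(t_{n})\|_{L^{2}}\leq C$. Set the new initial data
\[
\tilde f_{1,n}:=w(t_{n})+v(t_{n}),\qquad \tilde f_{2,n}:=\partial_{t}w(t_{n})+\partial_{t}v(t_{n}).
\]
Combining the free bound above with the assumed bound on $v(t_{n})$ shows that the norms of $(\tilde f_{1,n},\tilde f_{2,n})$ in $(\Ba^{s}_{4n+2,2,2}+H^{1})\times(\Ba^{s-1}_{4n+2,2,2}+L^{2})$ are controlled uniformly in $n$ by a constant depending only on $T^{*}$, $C$, and the norms of $(f_{1},f_{2})$.

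Then I would invoke the local well-posedness machinery at each time $t_{n}$: Theorem \ref{thm:SlowlyDecayingData} handles the $\Ba^{s}_{4n+2,2,2}$ component (after absorbing the $H^{1}$ part through the small-data/large-data variant analogous to the $p=6$ case in Theorem \ref{thm:LWPNLW} and Remark \ref{rem:LWP}), producing a solution on $[t_{n},t_{n}+\tau]$ in $L^{\tilde p}_{t}L^{4n+2}_{x}$ whose existence time $\tau=\tau(\|(\tilde f_{1,n},\tilde f_{2,n})\|_{X\times Y})$ is bounded below by some $\tau_{0}>0$. By uniqueness in $L^{\tilde p}_{t}L^{4n+2}_{x}$, this solution agrees with $u$ on the overlap. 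Choosing $n$ so that $T^{*}-t_{n}<\tau_{0}/2$ then extends $u$ to an interval $[0,t_{n}+\tau_{0}]\supsetneq [0,T^{*}]$ with $u\in L^{\tilde p}_{t}L^{4n+2}_{x}$, contradicting the maximality of $T^{*}$.

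The main obstacle I anticipate is ensuring the local existence time $\tau$ genuinely depends only on the norm of the initial data in the sum space and not on the decomposition. This is essentially an exponent-bookkeeping issue: one needs the power-$T^{\delta}$ gain in the nonlinear estimate \eqref{eq:AbstractEstimatesMod} to hold uniformly across the sum decomposition, which will require verifying that the nonlinear estimates of Section \ref{subsec:localwell} and the Picard-iterate estimates underlying Theorem \ref{thm:SlowlyDecayingData} both accommodate initial data of the form adapted-Besov-plus-Sobolev with $T^{\delta}$ gain. Once this is in place, the Gr\o nwall-type bound for $M(v)+E(v)$ from Proposition \ref{prop:GrowthBoundMassEnergy} (suitably adapted to the slower decaying setting, as invoked in Theorem \ref{thm:GWPSlowerDecay}) closes the loop and gives global well-posedness.
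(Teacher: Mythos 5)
Your proposal mirrors exactly the paper's intended argument: the paper remarks that the proof ``follows along the lines of the proof of Lemma~\ref{lem:BlowUpAlternative}'', and your contradiction argument — bounding the free evolution in the sum space uniformly on $[0,T^{*}]$, restarting at $t_{n}$ with controlled data, and invoking a lower bound on the local existence time via Theorem~\ref{thm:SlowlyDecayingData} — is precisely that adaptation. You also correctly flag the one genuine technical point (that the local existence time must depend only on the norm in the sum space, requiring the nonlinear estimates to carry the $T^{\delta}$ gain uniformly across the decomposition), which is the same point the paper handles implicitly by taking infima over decompositions as in the $p=6$ case.
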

The proof of Lemma \ref{lem:BlowUpSlowerDecayingData} follows along the lines of the proof of Lemma \ref{lem:BlowUpAlternative}. We turn to the proof of Theorem \ref{thm:GWPSlowerDecay}.
\begin{proof}[Proof~of~Theorem~\ref{thm:GWPSlowerDecay}]
By Lemma \ref{lem:BlowUpSlowerDecayingData}, for the proof of Theorem \ref{thm:GWPSlowerDecay} it suffices to show
\begin{equation*}
E(v) + M(v) + 1 \lesssim_T 1.
\end{equation*}
We use again Gr\o nwall's argument: We have like above
\begin{equation*}
\partial_t M(v) \lesssim M(v)^{1/2} E(v)^{1/2},
\end{equation*}
and we compute with $u_n = \sum_{j=0}^{n-1} u^j$
\begin{equation*}
\begin{split}
\partial_t E(v) &= (\partial_t v, \partial_t^2 v) + (\partial_t \nabla_x v, \nabla_x v) + (\partial_t v, |v|^2 v) \\
&= (\partial_t v, \partial_t^2 v - \Delta v + |v|^3) \\
&= (\partial_t v, -|v+u_n|^2 (v+u_n) + |v|^3 + \big| \sum_{j=0}^{n-2} u^j \big|^2 \sum_{j=0}^{n-2} u^j).
\end{split}
\end{equation*}
We can write schematically
\begin{equation*}
\begin{split}
&\quad -|v+u_n|^2 (v+u_n) + |v|^3 + \big| \sum_{j=0}^{n-2} u^j \big|^2 \sum_{j=0}^{n-2} u^j \\
&= A_{(2,1)}(v,u_n) + A_{(1,2)}(v,u_n) + \big[ \big| \sum_{j=0}^{n-2} u^j \big|^2 \sum_{j=0}^{n-2} u^j - \big| \sum_{j=0}^{n-1} u^j \big|^2 \sum_{j=0}^{n-1} u^j \big]
\end{split}
\end{equation*}
with $A_{(i,j)}(f,g)$ denoting terms which are homogeneous of degree $i$ in $f$ and of degree $j$ in $g$. We can estimate
$\| u_n(t) \|_{L_x^\infty} \lesssim_t 1$:
\begin{equation*}
|(\partial_t v, A_{(2,1)}(v,u_n) )| \lesssim \| \partial_t  v\|_{L^2} \|v \|^2_{L^4} \| u_n \|_{L^\infty} \lesssim_T E(v),
\end{equation*}
and
\begin{equation*}
|(\partial_t v, A_{(1,2)}(v,u_n))| \lesssim \| \partial_t v \|_{L^2} \| v \|_{L^2} \| u_n \|^2_{L^\infty} \lesssim_T E(v)^{1/2} + M(v)^{1/2}.
\end{equation*}
At last, we rewrite
\begin{equation*}
\big| \sum_{j=0}^{n-2} u^j \big|^2 \sum_{j=0}^{n-2} u^j - \big| \sum_{j=0}^{n-1} u^j \big|^2 \sum_{j=0}^{n-1} u^j = - \sum_{k,m} u^{n-1} u^k u^m
\end{equation*}
up to complex conjugates on the right-hand side. We estimate by H\"older's inequality
\begin{equation*}
|(\partial_t v, \sum_{k,m} u^{n-1} u^k u^m)| \lesssim \sum_{k,m} \| \partial_t v \|_{L^2} \| u^{n-1} \|_{L^{\frac{4n+2}{2n-1}}} \| u^k \|_{L^{4n+2}} \| u^m \|_{L^{4n+2}} \lesssim_T E(v)^{1/2}
\end{equation*}
noting that $u^k \in L^{4n+2}$ for any $k \geq 0$. This shows
\begin{equation*}
\partial_t (E(v) + M(v) + 1) \leq C(T) (E(v) + M(v) + 1),
\end{equation*}
and the proof is complete.
\end{proof}

\section*{Acknowledgements}

The first author would like to thank Po-Lam Yung for various helpful discussions, and the authors are grateful to the referee for their careful reading of the manuscript and for useful comments.

The research leading to these results has received funding from the Norwegian Financial Mechanism 2014-2021, grant 2020/37/K/ST1/02765. R.S. is supported by the Deutsche Forschungsgemeinschaft (DFG, German Research Foundation) -- Project-ID 258734477 -- SFB 1173.

\end{document}